\theoremstyle{plain}
  \newtheorem{theorem}{\bf Theorem}[section]
  \newtheorem{proposition}[theorem]{\bf Proposition}
  \newtheorem{lemma}[theorem]{\bf Lemma}
  \newtheorem{cor}[theorem]{\bf Corollary}
\theoremstyle{remark}
  \newtheorem{remark}[theorem]{\bf Remark}
  \numberwithin{equation}{section}
\DeclareMathOperator*{\essup}{ess \, sup}
  \newcommand{\CC}{\mathbb{C}} 
\newcommand{\RR}{\mathbb{R}} 
\newcommand{\NN}{\mathbb{N}} 
\newcommand{\SP}{\mathbb{S}} 
\newcommand{\norm}[1]{\lVert#1\rVert} 
\newcommand{\jp}[1]{\langle #1 \rangle } 
\newcommand{\pv}{{\it P.V.}} 
\newcommand{\ds}[1]{d\sigma_{#1\eta}} 
\begin{document}

\title[Optimal estimates backscattering]{Optimal estimates for the double dispersion operator in backscattering}
\author{ Cristóbal J. Meroño}

\begin{abstract}
We obtain optimal results in the problem of recovering the singularities of a potential from backscattering data.  
To do this we prove new  estimates for the double dispersion operator of backscattering, the first nonlinear term in the Born series.  
In particular, by measuring the regularity in the Hölder scale, we show that there is a one derivative gain in the integrablity sense for suitably decaying potentials $q\in W^{\beta,2}(\RR^n)$ with $\beta \ge (n-2)/2$.  In the case of radial potentials, we are able to give stronger optimal results  in the Sobolev scale.
\end{abstract}


\maketitle

\vspace{-5mm}

\section{Introduction and main theorems}

In this paper we show that the non-smooth part of a complex potential $q(x)$, $x\in \RR^n$ can be partially recovered from backscattering measurments of  scattering solutions of the  Schrödinger equation with Hamiltonian  $H = -\Delta +q(x)$.  A scattering solution is the response of the equation to  plane waves $e^{ik\theta\cdot x}$, $k\in (0,\infty)$ of direction $\theta \in \SP^{n-1}$. 

The main objective in inverse scattering is to reconstruct the potential $q(x)$ from the far field measurements of the scattering solutions. There are  different ways to do the measurements. In backscattering,  the scattered wave produced by the interaction of the potential with the plane wave $e^{ik\theta\cdot x}$ is measured only in the direction $-\theta$. This means essentially that we consider only the waves that are reflected back by the potential  (the echoes).
It is still an unknown if it is possible to recover the potential completely from the backscattering data. In this paper we center on results of partial recovery of  $q(x)$.  A usual approach is to construct, using the scattering data, the Born approximation of the potential. We will denote it by $q_B(x)$. 
As we shall see in the next section $q_B$ is related to the potential through the Born series expansion,
\begin{equation*} 
 {q}_B  \sim  {q} + \sum_{j=2}^{\infty}{Q_{j}(q)}, 
 \end{equation*}
where $Q_{j}(q)$ are certain multilinear operators  describing the multiple dispersion of waves. We will define explicitly $Q_j(q)$ in the next section.  We  call $Q_{2}$   the double dispersion operator of backscattering. 

It is known that if $q$ is small in certain norms, then  the difference $q-q_B$ is small  in appropriate function spaces (see \cite{er3,stefanov}). But for general potentials it has been  shown that although  $q -q_B$ is not small in general, at least it is smoother than $q$. This implies that the Born approximation contains the main singularities of the potential.  This was originally shown in \cite{PSo} in a different scattering problem (full data scattering) that is analogous to the backscattering problem.

As mentioned in \cite{BFRV13}, this problem is specially well suited to consider complex potentials, since in this case the scattering solutions are defined only for high values of the energy $k^2$. This implies that we can define the Fourier transform of  the Born approximation only for high frequencies, and as a consequence, in general we will consider that $q_B(x)$ is defined modulo a $C^\infty$ function.  Of course, this ambiguity  in the definition of $q_B$  has no effect when considering the regularity of $q-q_B$.

Therefore, a central question is to determine with precision which singularities of $q$ can be  recovered from $q_B$ or from the scattering data.  Essentially we want to determine which is the best $\varepsilon(\beta)>0$, such that for every  $q\in W^{\beta,2}(\RR^n)$, $\beta \ge 0$ we have that $q-q_B \in W^{\alpha,2}(\RR^n)$ for all $\alpha<\beta + \varepsilon(\beta)$ (this would be an $\varepsilon(\beta)^-$ derivative gain). 
 In backscattering there have been a great number of works addressing this problem.  For real potentials  we mention \cite{OPS, Re} in dimension  $2$, and \cite{RV,RRe} for dimensions $2$ and $3$. In \cite{RV}  it is shown that the derivative gain $\varepsilon(\beta)$ is always at least $1/2$ for $n=2,3$. In \cite{BM09}, using a certain  modification of $q_B$ and of the $Q_j$ operators,  they show that it is possible to take  $\varepsilon(\beta) = \min(\beta-(n-3)/2,1)$ for $n\ge3$ odd and $\beta \ge (n-3)/2$. More recently, returning to the case of the Born approximation $q_B$, in \cite{back} it has been proved  that it is possible to take $\varepsilon(\beta) = \min(\beta-(n-3)/2,1)$  for every dimension $n\ge 2$ a and complex $q$ (see \Cref{figure}). 
 
 Apart from the previous works,  which use the Sobolev scale to measure the regularity of $q-q_B$,  in \cite{BFRV13} they  use the Hölder scale. With this approach they are able to obtain  for complex potentials and $n=2$, a whole $1^-$ derivative gain in the integrability sense. This should be the best possible result, as we will explain later. In a different spirit, the recovery of singularities  from backscattering data has been studied also in \cite{GU,esw} without resorting to  the notion of the Born approximation.  Instead, the authors reconstruct the conormal singularities of $q$ from the  scattering data using the time domain approach to scattering.

Returning to the results in the Sobolev scale, by constructing certain radial counterexamples, in \cite{back} it has been shown also that necessarily 
\begin{equation} \label{eq:necessary}
\varepsilon(\beta) \le \min(\beta-(n-4)/2,1).
\end{equation}
 This means  that the fact that $q$ must have an increasing amount of a priori regularity in the scale $W^{\beta,2}(\RR^n)$ as $n$ grows,  is a feature of this problem.   This is reasonable since  the condition $q \in L^r$, $r>n/2$ is necessary for the existence of scattering solutions, as mentioned previously.
 We want to address the regularity gap that still exists  between the known positive results and the necessary condition  \eqref{eq:necessary}.  In this paper, under certain restrictions, we close  the gap that analogously appears in the the regularity estimates of $Q_2(q)$, the worst element in terms of regularity of the Born series expansion. This in turn enables us to improve the recovery of singularities results for $q-q_B$.

In the first result we restrict the range to $\beta>(n-2)/2$, and, as in \cite{BFRV13}, we use  the Holder scale $\Lambda^\alpha(\RR^n)$  to measure the regularity of $Q_2(q)$, instead of the Sobolev scale (see the end of this section for a rigorous definition of the functional spaces).
\begin{theorem} \label{teo:main1}
Let $n\ge 3$ and assume that $q\in W_1^{\beta,2}(\RR^n)$ with $\beta > (n-2)/2$.   Then 
\begin{equation} \label{eq:Q2HolderBound}
\norm{{Q}_2(q)}_{\Lambda^{\alpha}}\le C \norm{q}_{W_1^{\beta,2}}^2, 
\end{equation}
for all  $ \alpha<\beta - (n-2)/2$.

 Moreover, if we assume that $q$ is compactly supported, then  for any $ \alpha<\beta - (n-2)/2$,  we have that $q-q_B \in \Lambda^{\alpha}$ modulo a $C^\infty$ function.	
\end{theorem}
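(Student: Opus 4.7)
The plan is to establish the H\"older bound via a Littlewood--Paley analysis on the Fourier side. Recall that the Zygmund space $\Lambda^\alpha$ admits the characterization $\norm{f}_{\Lambda^\alpha}\sim \sup_{j\ge 0}2^{j\alpha}\norm{\Delta_j f}_\infty$, where $\Delta_j$ is a frequency projection to $|\xi|\sim 2^j$. By Hausdorff--Young, and because the annulus $|\xi|\sim 2^j$ has volume of order $2^{jn}$, it suffices to produce a pointwise estimate
\[
|\widehat{Q_2(q)}(\xi)|\lesssim |\xi|^{-n-\alpha}\norm{q}_{W_1^{\beta,2}}^2, \qquad |\xi|\gtrsim 1,
\]
for every $\alpha<\beta-(n-2)/2$.

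Next I would derive an explicit Fourier representation of $Q_2(q)$. Starting from its definition as an iterated integral with the outgoing free resolvent, applying Parseval, and using the backscattering parametrization $\xi=2k\theta$, a change of variable yields, up to a harmless constant,
\[
\widehat{Q_2(q)}(\xi) = c\int\frac{\hat q(\zeta)\,\hat q(-\zeta-\xi)}{|\zeta|^2+\zeta\cdot\xi-i0}\,d\zeta.
\]
The Sokhotski--Plemelj formula then splits this into a principal value part and a $\delta$-contribution supported on the $(n-1)$-sphere $\Sigma_\xi=\{\zeta:|\zeta+\xi/2|=|\xi|/2\}$ of radius $|\xi|/2$, which passes through the origin.

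The heart of the argument is to control the spherical piece, which is a constant multiple of
\[
|\xi|^{n-2}\int_{\SP^{n-1}} \hat q(-\xi/2+r\omega)\,\hat q(-\xi/2-r\omega)\,d\sigma(\omega), \qquad r=|\xi|/2.
\]
By Cauchy--Schwarz this is dominated by the $L^2$-norm squared of $\hat q$ restricted to $\Sigma_\xi$. The threshold $\beta>(n-2)/2$ enters twice: it is the scaling regime in which the $|\xi|^{n-2}$ volume factor of $\Sigma_\xi$ is beaten by $|\xi|^{-2\beta}$-type decay of such traces, and it lies at or above the Sobolev trace threshold on hypersurfaces. The decay weight in $W_1^{\beta,2}$, which on the Fourier side grants $\hat q$ one extra Sobolev derivative, is what guarantees that the trace estimate on $\Sigma_\xi$ holds pointwise in the spectral parameter $|\xi|$ and uniformly as $\Sigma_\xi$ translates with $\xi$. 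Summing up produces the pointwise bound above for every $\alpha<\beta-(n-2)/2$. The principal value piece is treated by an essentially parallel analysis, using the odd cancellation of $\mathrm{PV}(|\zeta|^2+\zeta\cdot\xi)^{-1}$ transverse to $\Sigma_\xi$, and yields at worst the same decay.

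The ``moreover'' clause follows by combining \eqref{eq:Q2HolderBound} with the Born series $q_B\sim q+\sum_{j\ge 2}Q_j(q)$. For compactly supported $q\in W^{\beta,2}$ with $\beta>(n-2)/2$, the higher-order terms $Q_j(q)$ with $j\ge 3$ are strictly smoother than $Q_2(q)$ --- for instance by iterating the estimates of \cite{back} --- so they too belong to $\Lambda^\alpha$ for the same range of $\alpha$. Summing the tail and recalling that $q_B$ is defined only modulo a $C^\infty$ function, one concludes that $q-q_B\in\Lambda^\alpha$ modulo $C^\infty$. The main obstacle, where the optimality of the hypothesis is paid for, is the trace estimate for $\hat q$ on the sphere $\Sigma_\xi$: any loss there would prevent reaching the critical exponent $\beta-(n-2)/2$, which is precisely why the weight $W_1$ and the threshold $\beta>(n-2)/2$ both appear in the assumptions.
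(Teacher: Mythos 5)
Your overall skeleton (split $\widehat{Q_2(q)}$ into a delta-contribution on the Ewald sphere $\Sigma_\xi$ plus a principal value, then pass to a Besov/H\"older bound through the Fourier side) is in the right spirit, but the central reduction is not viable. You reduce \eqref{eq:Q2HolderBound} to a \emph{pointwise} decay estimate $|\widehat{Q_2(q)}(\xi)|\lesssim |\xi|^{-n-\alpha}\norm{q}_{W_1^{\beta,2}}^2$, and you justify it by a trace bound for $\widehat q$ on $\Sigma_\xi$ ``pointwise in the spectral parameter''. No such pointwise bound is available at the claimed strength: the sphere $\Sigma_\xi$ passes through the origin, so on one half of it $\widehat q$ has no decay at all, while on the far half the weight $\jp{\cdot}^\beta\widehat q$ is only in $L^2$, so its restriction to $\Sigma_\xi$ is controlled (via the $W_1$ weight and the trace theorem) only in $L^2(\Sigma_\xi)$, with \emph{no} gain in $|\xi|$. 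Running Cauchy--Schwarz on the sphere with these trace bounds gives at best $|\widehat{Q_2(q)}(\xi)|\lesssim |\xi|^{-1-\beta}\norm{q}_{W_1^{\beta,2}}^2$, whereas your reduction needs decay of order $|\xi|^{-\beta-(n+2)/2}$ for $\alpha$ close to $\beta-(n-2)/2$; for $n\ge 3$ you are short by roughly a factor $|\xi|^{-n/2}$. That missing factor is exactly what is recovered only by averaging over the output frequency: the proof in the paper does not bound $\widehat{Q_2(q)}$ pointwise but in $L^1_\alpha(\RR^n)$ (\Cref{prop:Q2:L1}), using the inequality $\norm{f}_{\Lambda^\alpha}\le C\norm{\widehat f}_{L^1_\alpha}$, a change of order of integration over the incidence set $\{|\xi-\eta/2|=r|\eta/2|\}$ (\Cref{lemma:fubini}), Cauchy--Schwarz in \emph{both} frequency variables, uniform sphere integrals (\Cref{lemma:integral2}) and, for $r=1$, Santal\'o's formula. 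Your Littlewood--Paley reduction, by taking $L^\infty$ on each annulus, discards precisely this $L^2$-averaging and cannot reach the exponent $\beta-(n-2)/2$ (which the counterexamples of \cite{back} show is already critical for the integrated estimate).

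A second, smaller gap: the principal value piece is not ``essentially parallel'' to the spherical piece. In the paper it requires estimates for $S_r(q)$ uniform in $r\in(0,\infty)$ with decay in $r$, and, near $r=1$, bounds on $\partial_r S_r(q)$; the latter produce the terms $K_r(\widehat{x_iq},\widehat q)$ and are the reason the weighted norm $W_1^{\beta,2}$ appears in the hypothesis (see \Cref{lemma:PV} and the lemma following it). An appeal to ``odd cancellation transverse to $\Sigma_\xi$'' without quantifying the modulus of continuity of $r\mapsto S_r(q)(\eta)$ (on a window of size $\jp{\eta}^{-\tau}$, as in \eqref{eq:etadelta}) does not suffice. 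Your treatment of the ``moreover'' clause is essentially the paper's (tail of the Born series via \Cref{prop:convergence} plus the embedding \eqref{eq:SobolevIneq}), and that part is fine as an outline.
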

 This theorem extends for dimension $n \ge 3$ the  results  proved in  \cite{BFRV13} in dimension 2. It gives for $Q_2(q)$ and $q-q_B$ a $1^-$ derivative gain in the integrability sense. In fact, by the Sobolev inequality we have that
  \begin{equation} \label{eq:SobolevIneq}
  \norm{f}_{\Lambda^{\gamma-(n-2)/2}} \le \norm{f}_{W^{\gamma+1,2}} ,
   \end{equation}
 if $\gamma>(n-2)/2$. By \eqref{eq:necessary} $\varepsilon(\beta)=1$ is the best possible result in the Sobolev scale when $\beta>(n-2)/2$, and hence, \Cref{teo:main1} gives a weaker version of what is expected to be the best possible result. 
  Actually we are able to prove a slightly better result than \eqref{eq:Q2HolderBound}, an estimate for the Fourier transform of $Q_2(q)$ in  $L^1_\alpha(\RR^n)$ (see \Cref{prop:Q2:L1}).
  
We can also obtain the optimal result in the Sobolev scale, but in exchange we have to assume that the potential is a radial function. Consider a constant $C_0>0$. Let $0 \le \chi(\xi) \le 1$, $\xi\in \RR^n$ be a smooth cut-off function satisfying $\chi(\xi)=1$ if $|\xi|>2C_0$ and $\chi(\xi)=0$ if $|\xi|<C_0$. We define the operator $\widetilde Q_j$ by the relation
\begin{equation} \label{eq:cutoff} 
\widehat{\widetilde{Q}_j(q)}(\xi) := \chi(\xi) \widehat{{Q}_j(q)}(\xi),
\end{equation} 
so that $Q_j(q)$ differs from $\widetilde Q_j(q)$ in a smooth function. 
\begin{theorem} \label{teo:main2}
Let $n \ge 2$ and $\beta \ge \min(0,(n-4)/2)$. Then there is a constant $C>0$ such that the following estimate holds 
\begin{equation} \label{eq:Q2estR}
\norm{\widetilde{Q}_2(q)}_{W^{\alpha,2}} \le C\norm{q}_{W_{1}^{\beta,2}}^2,
\end{equation}
for all $\alpha< \beta + \varepsilon(\beta)$ and every $q \in W^{\beta,2}(\RR^n)$  radial, if and only if
\begin{equation} \label{eq:epsQ2}
 \varepsilon(\beta) =  \min(\beta-(n-4)/2,1).
 \end{equation}
\end{theorem}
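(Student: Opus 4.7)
The plan is to establish sufficiency and necessity separately, with the radial hypothesis enabling a one-dimensional reduction unavailable in the setting of \Cref{teo:main1}. Since $q$ is radial, so is $\hat q$, and inspection of the defining formula (given in the next section) shows that $\widehat{Q_2(q)}$ is radial as well. Writing the defining oscillatory integral in spherical coordinates and integrating out the angular variables via the Funk--Hecke formula, one arrives at a representation
\begin{equation*}
\widehat{Q_2(q)}(r) = \int_0^\infty \int_0^\infty K_n(r,s,t)\,\tilde q(s)\,\tilde q(t)\, s^{n-1}t^{n-1}\, ds\, dt,
\end{equation*}
where $\tilde q$ is the radial profile of $\hat q$ and $K_n$ has an explicit Bessel representation whose asymptotics drive the whole argument.

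For the upper bound, the plan is to control $\norm{\widetilde Q_2(q)}_{W^{\alpha,2}}^2 \asymp \int_{C_0}^\infty r^{2\alpha}|\widehat{Q_2(q)}(r)|^2\, r^{n-1} dr$ by dyadically decomposing in $s$ and $t$. The Bessel asymptotics split the analysis into three regimes, $s,t\ll r$, $s\approx t\approx r$ (the resonant band), and $s$ or $t\gg r$, and a careful bookkeeping of derivative losses in each regime should yield exactly the gain $\min(\beta-(n-4)/2,1)$. The $W_1^{\beta,2}$ weight enters through the low-frequency regime, supplying the $L^1$ control of $q$ that prevents a loss near the origin which would otherwise obstruct the sharp exponent. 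In particular, the factor $(n-4)/2$ emerges from the spherical Jacobian $r^{n-1}$, while the cap at $1$ reflects the one-derivative ceiling intrinsic to a single convolution with a Calderón--Zygmund-type kernel.

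For the necessity, I plan to adapt the radial counterexamples already employed in \cite{back} to establish \eqref{eq:necessary}. Concretely, take $q_N$ radial with $\hat q_N$ concentrated on a spherical shell $|\xi|\sim N$; then $\widehat{Q_2(q_N)}(r)$ can be computed (or lower-bounded) in the regime $r\sim N$ from the explicit kernel above, and comparison of its $W^{\alpha,2}$-norm with $\norm{q_N}_{W_1^{\beta,2}}^2$ will show that no bound with $\alpha>\beta+\min(\beta-(n-4)/2,1)$ can hold. The two branches of the $\min$ correspond to the two distinct obstructions described above, and their competition pins down the transition value $\beta=(n-2)/2$ visible in \Cref{teo:main1}.

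The main obstacle is the treatment of the resonant band $s\approx t\approx r$, which in the non-radial setting is exactly what forces the half-derivative loss in \Cref{teo:main1}. For radial data this band is effectively lower-dimensional after the angular integration, and the oscillation of the Bessel kernel is expected to supply the extra decay needed to reach the optimal exponent. Turning this heuristic into a sharp estimate requires a precise stationary-phase analysis of $K_n(r,s,t)$ uniformly in the three parameters, and the sharpness of the argument is what makes the characterization an if-and-only-if rather than a one-sided bound.
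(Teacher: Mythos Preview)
Your proposal is a plan rather than a proof, and it diverges substantially from the route the paper actually takes. The paper does \emph{not} write $\widehat{Q_2(q)}$ as a triple Bessel kernel and analyze it by stationary phase. Instead it uses the decomposition $\widehat{Q_2(q)}(\eta)=S_1(q)(\eta)+P(q)(\eta)$ from \eqref{eq:Q2strct}, where $S_r$ is the spherical operator \eqref{eq:Sr} built from integrals over Ewald spheres and $P$ is a principal value in the radial parameter $r$. The radial hypothesis enters in a single, clean step: when one needs to estimate $\int_{N_r(\xi)}|\widehat q(\eta-\xi)|^2|\eta-\xi|^{n-1-2\lambda}\,d\sigma_{r,\xi}(\eta)$, radiality of $\widehat q$ allows the Leckband-type formula of \Cref{prop:BeltMelin} and \Cref{lemma:Mformula} to collapse this surface integral to a one-dimensional integral with an explicit density, giving \eqref{eq:Kradial} directly. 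No oscillation, no asymptotics, no resonant-band analysis is needed; the gain over the non-radial case comes from replacing a Cauchy--Schwarz loss by an exact integration. The principal value is then handled by the general \Cref{lemma:PV}, and the weight $W_1^{\beta,2}$ enters not through low-frequency $L^1$ control as you suggest, but through the pointwise bound \eqref{eq:Kpoint} on $\partial_r S_r(q)$, which produces factors of $x_i q$.

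The genuine gap in your proposal is that the heart of the argument---the ``precise stationary-phase analysis of $K_n(r,s,t)$ uniformly in the three parameters''---is flagged as the main obstacle but not carried out, and you offer no mechanism explaining why radiality should make that uniform analysis tractable when it is not in the general case. Your heuristic that the resonant band becomes ``effectively lower-dimensional after the angular integration'' is not substantiated; in the paper the improvement has nothing to do with dimensional reduction of a resonant set and everything to do with the exact measure-theoretic identity \eqref{eq:BeltMelin}. For the necessity direction you are on firmer ground: the paper simply cites \cite[Theorem 1.4]{back}, whose counterexamples are indeed radial, so that part of your plan is correct.
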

The same result has been obtained in the full data case without the restriction for radial potentials. See \cite[Theorem 1.2]{BFRV10} for the sufficient condition and \cite[Theorem 1.3]{fix} for the necessary condition.
As a consequence of the previous theorem, we obtain the following corollary of recovery of singularities for the Born approximation.
\begin{cor} \label{teo:corRadial} Let $n\ge 2$ and let $q \in W^{\beta,2}(\RR^n)$ be a compactly supported and radial function. Then  we have that $q-q_B\in W^{\alpha,2 }(\RR^n)$ if
\begin{equation} \label{eq:rangeCor}
  \alpha< \begin{cases}
 \beta + 2(\beta- (n-3)/2), \hspace{5 mm} if \hspace{5mm} (n-3)/2 \le  \beta < (n-2)/2, \\
  \beta + 1, \hspace{28.5 mm} if \hspace{5mm} (n-2)/2 \le \beta<\infty .
         \end{cases} 
\end{equation}
\end{cor}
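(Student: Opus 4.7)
My plan is to combine the improved $Q_2$ estimate of Theorem~\ref{teo:main2} for radial potentials with the multilinear bounds for the higher-order terms of the Born series established in \cite{back}. Since $q$ is compactly supported and lies in $W^{\beta,2}(\RR^n)$, it belongs to $W_1^{\beta,2}(\RR^n)$; the Born series then gives, modulo a $C^\infty$ function,
\[
q_B - q \;=\; Q_2(q) + \sum_{j\ge 3} Q_j(q),
\]
and I would estimate the two pieces separately, keeping the smaller of the two derivative gains.

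For the first piece I would apply Theorem~\ref{teo:main2} to $\widetilde Q_2(q)$, which agrees with $Q_2(q)$ modulo a smooth function. This immediately yields $Q_2(q) \in W^{\alpha,2}$ for every $\alpha < \beta + \min(\beta - (n-4)/2,1)$, i.e., a derivative gain of $\min(\beta-(n-4)/2,1)$.

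For the tail $\sum_{j\ge 3} Q_j(q)$ I would invoke the kernel estimates in \cite{back}. The overall bound proved there for $q-q_B$ is $\alpha < \beta + \min(\beta-(n-3)/2,1)$, and the unique obstruction in that argument is precisely the $j=2$ term. The terms with $j\ge 3$ involve at least one additional convolution with $q$, which provides extra smoothing. I would track this smoothing to obtain a tail gain of $\min(2(\beta-(n-3)/2),1)$ derivatives—the factor $2$ reflecting the extra factor of $q$ present already in $Q_3$—and this improved tail bound does not require any radial hypothesis.

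Combining the two contributions and retaining the smaller gain reproduces \eqref{eq:rangeCor}. For $\beta\ge(n-2)/2$ both gains equal $1$, yielding $\alpha<\beta+1$. For $(n-3)/2\le\beta<(n-2)/2$ the tail gain $2(\beta-(n-3)/2)$ is strictly smaller than the $Q_2$ gain $\beta-(n-4)/2=(\beta-(n-3)/2)+\tfrac12$, and hence governs the final bound $\alpha<\beta+2(\beta-(n-3)/2)$. The main obstacle will be rigorously extracting the $\min(2(\beta-(n-3)/2),1)$ gain for the tail: this requires a careful bookkeeping of how the additional convolutions with $q$ in $Q_j$, $j\ge 3$, improve the relevant multilinear bounds beyond what was needed to handle $j=2$.
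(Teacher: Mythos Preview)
Your proposal is correct and follows exactly the paper's route: split the Born series as $\widetilde Q_2(q)+\sum_{j\ge 3}\widetilde Q_j(q)$, apply Theorem~\ref{teo:main2} to the first term, and use the tail convergence result from \cite{back} for the remainder. The only refinement is that the ``main obstacle'' you anticipate---extracting the improved gain for the tail---is already packaged in the paper as Proposition~\ref{prop:convergence} with $\ell=3$, which gives the tail in $W^{\alpha,2}$ for $\alpha<\beta+2(\beta-(n-3)/2)$ when $(n-3)/2\le\beta<(n-1)/2$ and $\alpha<\beta+2$ when $\beta\ge(n-1)/2$; no further bookkeeping is needed.
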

See \Cref{figure} for a graphic representation of this results. The previous result  gives a $1^-$ derivative  gain in the range $\beta >(n-2)/2$ which is the best  possible result (except for the limiting case $\alpha=\beta+1$) by \cite[Theorem 1.1]{back}. Unfortunately, this is not the case in the range $(n-4)/2 \le  \beta < (n-2)/2$, since, to get an optimal result, the estimates of the other $Q_j$ operators should be improved too.

 Since the results of recovery of singularities of a potential are  non quantitative in nature, to get \Cref{teo:corRadial} we don't need necessarily  a quantitative   estimate like \eqref{eq:Q2estR}, it is just enough to show that the right  hand side is finite. Then,  instead of asking $q$ to be radial, we can consider potentials which satisfy  the much weaker assumption that there is some radial function  $g\in W^{\beta,2}(\RR^n)$ such that $|\widehat{q} (\xi)| \le \widehat{g}(\xi)$. This yields the following corollary.
\begin{cor} \label{teo:corRadial2} Let $n \ge 2$ and let $q \in W^{\beta,2}(\RR^n)$ be  compactly supported. Assume also that there exists some $g\in W^{\beta,2}(\RR^n)$ such that $|\widehat{q} (\xi)| \le \widehat{g}(\xi)$. Then  we have that $q-q_B\in W^{\alpha,2 }(\RR^n)$ if $\alpha$ and $\beta$ satisfy \eqref{eq:rangeCor}.
\end{cor}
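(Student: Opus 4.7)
The plan is to reduce \Cref{teo:corRadial2} to \Cref{teo:corRadial} via a pointwise Fourier-side domination, exploiting the hypothesis $|\widehat q(\xi)|\le \widehat g(\xi)$ together with the observation (already emphasised by the author) that for the recovery-of-singularities conclusion it is enough to show that the relevant Sobolev norms are finite, not to carry a quantitative bound in $q$ itself.

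First I would write the Born series as $q_B-q \sim \sum_{j\ge 2}Q_j(q)$ modulo $C^\infty$ and split the argument into the leading term $Q_2$, which is the bottleneck of the regularity, and the tail $\sum_{j\ge 3}Q_j(q)$. For the tail I would invoke the bounds already established in \cite{back}: for compactly supported $q\in W^{\beta,2}$ with $\beta\ge (n-3)/2$, the terms $Q_j(q)$, $j\ge 3$, lie in $W^{\alpha,2}$ for any $\alpha$ in the range \eqref{eq:rangeCor}, and these bounds depend only on $\|q\|_{W^{\beta,2}}$ (plus decay/support), which is finite by assumption. Hence only $Q_2$ needs fresh attention.

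For $Q_2$ the key step is to note that, in view of the explicit oscillatory-integral representation of $Q_2(q)$, its Fourier transform has the form
\[
\widehat{Q_2(q)}(\xi)=\int K(\xi,\eta)\,\widehat q(\eta)\,\widehat q(\xi-\eta)\,d\eta
\]
for a kernel $K$ coming from the free resolvent. Taking absolute values and using $|\widehat q|\le \widehat g$ with $\widehat g\ge 0$ (one may always reduce to this case by replacing $g$ with a suitable radial majorant such as the inverse Fourier transform of a radial envelope of $\widehat g$), we get the pointwise bound
\[
\bigl|\widehat{Q_2(q)}(\xi)\bigr|\le \int |K(\xi,\eta)|\,\widehat g(\eta)\,\widehat g(\xi-\eta)\,d\eta =: F_g(\xi).
\]
Consequently $\|Q_2(q)\|_{W^{\alpha,2}}\le \|(1+|\cdot|^2)^{\alpha/2}F_g\|_{L^2}$. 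I would then check that the proof of \Cref{teo:main2} (and of \Cref{prop:Q2:L1} where relevant) proceeds through exactly this pointwise estimate of $|\widehat{Q_2}|$ by $|K|$-weighted convolutions, so that the very same argument, applied to the radial $g$, yields
\[
\|(1+|\cdot|^2)^{\alpha/2}F_g\|_{L^2}\le C\|g\|_{W_1^{\beta,2}}^2 <\infty
\]
for $\alpha$ in the range \eqref{eq:rangeCor}. Combining with the tail estimate gives $q-q_B\in W^{\alpha,2}$ modulo a smooth function, as claimed.

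The main obstacle is the second paragraph: one must verify that the proof of \Cref{teo:main2} genuinely establishes a bound on the positive symbol $F_g$, rather than exploiting cancellation in $K$. In practice the proofs of such Born-series estimates go through triangle-inequality bounds on the resolvent kernel, so this passage is harmless, but it must be written out carefully for each frequency regime used in \Cref{teo:main2}. A small technical point is the reduction to $\widehat g\ge 0$: if the dominating $g$ is only assumed to lie in $W^{\beta,2}$ one can replace it by its symmetric decreasing rearrangement on the Fourier side, or by convolving its Fourier transform with a fixed radial mollifier, both of which preserve the $W^{\beta,2}$ norm and the pointwise majorisation of $|\widehat q|$.
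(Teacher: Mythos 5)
Your reduction of the tail $\sum_{j\ge 3}\widetilde Q_j(q)$ to \Cref{prop:convergence} is the same as the paper's and is fine, but the key step for $Q_2$ has a genuine gap. You propose the pointwise majorization $|\widehat{Q_2(q)}(\xi)|\le F_g(\xi):=\int |K(\xi,\eta)|\,\widehat g(\eta)\widehat g(\xi-\eta)\,d\eta$ and assert that the estimates of \Cref{teo:main2} are "triangle-inequality bounds on the resolvent kernel, so this passage is harmless." That is exactly where the argument fails: by \eqref{eq:Q2strct}, $\widehat{Q_2(q)}=S_1(q)+\pv\int_0^\infty \frac{1}{1-r}S_r(q)\,dr$, so putting absolute values on the kernel replaces the principal value by $\int_0^\infty |1-r|^{-1}S_r\,dr$ applied to $\widehat g$, which in general diverges logarithmically at $r=1$; your $F_g$ need not be finite, let alone in $L^2_\alpha$. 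The paper's bound for the principal value term (\Cref{lemma:PV}) genuinely exploits the cancellation of $1/(1-r)$ near $r=1$, via the fundamental theorem of calculus in $r$ and the pointwise bound \eqref{eq:Kpoint} on $\partial_r S_r(q)$, which brings in $\widehat{x_i q}$ and requires $\widehat q\in C^1$ (this is where compact support of $q$ is used). So the estimate of $\widetilde Q_2$ cannot be reduced to a positive-kernel bound on the full symbol.

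The correct repair, which is the paper's route, is to apply the domination one level down, at the positive spherical operator $K_r$ of \eqref{eq:def:K}: since $K_r$ sees only $|\widehat q|$ in each slot and \Cref{lemma:Kradial} uses only that $|\widehat{f_2}|$ is radial, the hypothesis gives $\widetilde K_r(\widehat{f_1},\widehat q)\le \widetilde K_r(\widehat{f_1},\widehat g)$, hence finiteness of $\norm{\widetilde S_r(q)}_{L^2_{\alpha+\varepsilon}}$ uniformly in $r$ and, taking $f_1=x_i q$ in \eqref{eq:Kpoint}, of $\norm{\partial_r \widetilde S_r(q)}_{L^2_{\alpha-1}}$ for $r$ near $1$; feeding these into \Cref{lemma:PV} (with $p=2$, $\phi=\chi$) gives $\widetilde P(q)\in L^2_\alpha$, and the case $r=1$ is handled as in \Cref{prop:S1radial} with $\widehat g$ inserted in the inner integral of \eqref{eq:SradialLast}. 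Plancherel and \eqref{eq:Q2strct2} then finish the $Q_2$ term. Two further remarks: the dominating function $g$ must have radial $\widehat g$ (this is what \Cref{lemma:Kradial} needs, and is the intended hypothesis), and your fallback of replacing $\widehat g$ by a symmetric rearrangement or mollification does not work, since rearranging $\widehat g$ destroys the pointwise majorization $|\widehat q|\le \widehat g$; no such reduction is needed, because the stated inequality already forces $\widehat g\ge 0$.
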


 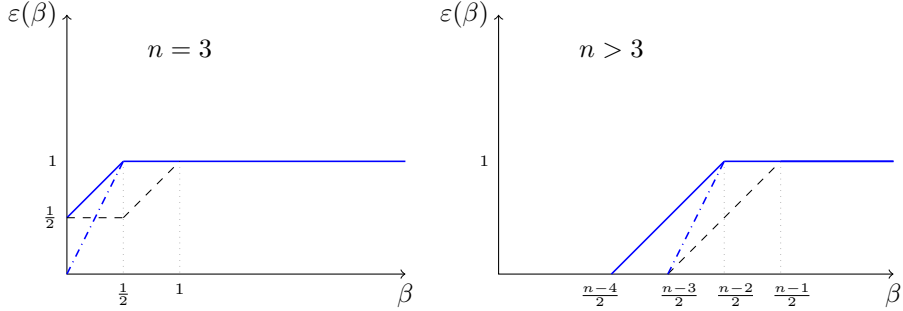
\begin{figure}
\begin{tikzpicture}[scale=1.5]
 \draw[<->] (3,0) node[below]{$\beta$} -- (0,0) --
(0,2.3) node[left]{$\varepsilon(\beta)$};
  \draw [dotted,gray, very thin] (0.5,1) -- (0.5,0);
    \draw [dotted, gray, very thin] (1,1) -- (1,0);
    \draw   [semithick,blue] (0,0.5) -- (0.5,1);
 \draw   [semithick,blue]  (0.5,1) -- (3,1);
   \draw  [dash dot,semithick,blue] (0,0) -- (.5,1);
   \draw [dashed]   (0,.5) -- (.5,.5);
    \draw  [dashed] (0.5,0.5) -- (1,1);
  \node at (1,2) {$n=3$};
 \node [left] at (0,1) { \tiny $1$ };
 \node [left] at (0,0.5) { \tiny $\frac{1}{2}$ };
 \node [below] at (0.5,0) { \tiny $\frac{1}{2}$ };
  \node [below] at (1,0) { \tiny $1$ };
\end{tikzpicture}
\begin{tikzpicture}[scale=1.5]
1. \draw[<->] (3.5,0) node[below]{$\beta$} -- (0,0) --
(0,2.3) node[left]{$\varepsilon(\beta)$};
 \draw [dotted, gray, very thin] (2,1) -- (2,0);
  \draw [dotted,gray,very thin] (2.5,1) -- (2.5,0);
   \draw  [dashed] (1.5,0) -- (2.5,1);
    \draw   [semithick,blue](1,0) -- (2,1);
 \draw  [semithick,blue]  (2,1) -- (3.5,1);   
 \draw  [semithick,blue] (2.5,1) -- (3.5,1);
  \draw  [dash dot,semithick,blue] (1.5,0) -- (2,1);
  \node at (1,2) {$n>3$};
 \node [left] at (0,1) { \tiny $1$ };
 \node [ below] at (1.6,0) { \tiny $\frac{n-3}{2}$ };
  \node [below] at (2.1,0) { \tiny $\frac{n-2}{2}$ };
   \node [below] at (2.6,0) { \tiny $\frac{n-1}{2}$ };
      \node [below] at (.9,0) { \tiny $\frac{n-4}{2}$ };
\end{tikzpicture}
\caption{  
   The solid (blue)  line represents the value of $\varepsilon(\beta)$ for  $Q_2(q)$  given by \Cref{teo:main2}. The dashed line represents the previously known  results of \cite{RRe,back} for general potentials, and the dot dashed (blue) line represents the regularity gain  of $q-q_B$  given  by \eqref{eq:rangeCor}.}
   
  \label{figure}
\end{figure} 

To prove \Cref{teo:main1}  and \Cref{teo:main2}  we estimate the Fourier transform of $Q_2(q)$. In the next section, we will introduce the spherical operator $S_r(q)$, $r\in (0,\infty)$ (see \eqref{eq:Sr})  that involves integrals of $\widehat{q}$ over the so called Ewald spheres. Then,  since we have the formula of \cite{back}
\begin{equation*}
\widehat{Q_2(q)}(\eta) = S_1(q)(\eta) + \pv \int_0^\infty \frac{1}{1-r}S_r(q)(\eta) \, dr,
\end{equation*}
the main task to bound $\widehat{Q_2(q)}$ is to improve the known estimates for $S_r(q)$, and then use the techniques developed in \cite{fix,back} to control the principal value term. To estimate $S_r(q)$ it will be essential to understand certain geometric properties of the Ewald spheres. In the case of $r=1$ we  use a simple case of Santaló's formula in spheres for which we give a short proof in \Cref{sec:santa}.

The recovery of singularities from the Born approximation has been studied in other inverse scattering problems. The case of full data scattering, where all the information in the scattering amplitude is used to construct a Born approximation, has been studied in \cite{PSo,PSe,PSS} for real potentials and in \cite{BFRV10,fix} for complex potentials. Another important  problem not having the radial symmetry of backscattering or full data scattering is the case of fixed angle scattering. In this case the recovery of singularities has been studied in \cite{R} and  \cite{fix}. Surprisingly, it features  the same regularity gap  that appears in backscattering  between the positive and negative results in the Sobolev scale. We mention also that an analogue of the Born approximation has been introduced  to study the recovery of singularities of live loads in Navier elasticity, see \cite{BFPRM1,BFPRM2}.

The question of uniqueness of the inverse scattering problem for backscattering data is still open.  In \cite{RU} it has been proved for $n=3$ that two potentials differing in a finite number of spherical harmonics with radial coefficients must be identical if they have the same backscattering data. The question of uniqueness for small potentials  was studied in \cite{prosser}. Generic uniqueness and uniqueness for small potentials  has  been obtained  in \cite{er3,stefanov}  for dimensions 2 and 3 and in \cite{lagerg}  for $n=3$.  Similar results have been obtained in odd dimension $n\ge 3$ in \cite{Uh01} and  for even dimension in \cite{Wa}.

Let's introduce formally the functional spaces used in the  work. If $\jp{x} = (1+|x|^2)^{1/2}$ and $\alpha \in \RR$, we introduce  the (Bessel) fractional derivative operator $\jp{D}^{\alpha}$ given by the Fourier symbol $\jp{\xi}^\alpha$, and the weighted Sobolev spaces
\[W_\delta^{\alpha,p}(\RR^n) := \{ f\in \mathcal{S}' : \norm{\jp{\cdot}^\delta \jp{D}^{\alpha} f}_{L^p} <\infty\}, \]
for $\delta \in \RR$ and $p\ge1$. We say that $f\in W^{\alpha,p}_{loc}(\RR^n)$ if $\phi f\in W^{\alpha,p}(\RR^n)$ for every $\phi \in C^\infty_c(\RR^n)$, also we usually use the notation $L_\delta^p(\RR^n) := W^{0,p}_\delta(\RR^n)$.

The Hölder spaces $\Lambda^\alpha(\RR^n)$, $\alpha \ge 0$ are the Banach spaces given by  the norm,
\[ \norm{f}_{\Lambda^{\alpha}} = \sum_{|\gamma| < m}\norm{\partial^{\gamma} f}_{\infty} + \sum_{|\gamma| = m} \sup_{t \neq 0} \frac{|\partial^\gamma f(\cdot) - \partial^\gamma f(\cdot -t)|}{|t|^{\sigma}},\]
where we are decomposing $\alpha$ in its integer and fractional parts, $\alpha= m + \sigma$ with $m \in \NN $ and $\sigma \in [0,1)$.

\section{The Born series expansion}

We begin by showing how to construct the scattering solutions, and later we will see how the Born approximation and series are defined. Consider  a scattering solution $u_s(k,\theta,x)$, $k\in(0,\infty)$, $\theta\in \SP^{n-1}$,  of the stationary Schrödinger equation satisfying
\begin{equation} \label{eq.int.1}
\begin{cases}
(-\Delta + q -k^2)u=0 \\
u(x) = e^{i k \theta \cdot x} + u_s(k,\theta,x) \\
\lim_{|x| \to \infty} (\frac{\partial u_s}{\partial r} - iku_s)(x) = o(|x|^{-(n-1)/2}),
\end{cases}
\end{equation}
where the last line is the outgoing Sommerfeld radiation condition (necessary for uniqueness).  If $q$ is compactly supported, a solution $u_s$ of (\ref{eq.int.1}) has the following asymptotic behavior when $|x| \to \infty$
$$u_s(k,\theta,x) =  C |x|^{-(n-1)/2}k^{(n-3)/2} e^{ik|x|} u_\infty(k,\theta,x/|x|) + o(|x|^{-(n-1)/2}) ,$$
 for a certain function $u_\infty(k,\theta,\theta')$, $k\in (0,\infty)$, $\theta,\theta' \in \SP^{n-1}$. As mentioned in the introduction, $u_\infty$ is the so called scattering amplitude or far field pattern, and it is given by the expression
\begin{equation} \label{eq.int.2}
u_\infty (k,\theta,\theta') = \int_{\RR^n} e^{-ik\theta'\cdot y} q(y) u(y) \, dy,
\end{equation}
where it is important to notice that $u$ depends also on $k$ and $\theta$ (for a proof of this fact when $q\in C^\infty_c(\RR^n)$ see for example \cite[p. 53]{notasR}). 

Applying the outgoing resolvent of the Laplacian $R_k$ in the first line of (\ref{eq.int.1}), where 
\begin{equation} \label{eq:resolvent1}
\widehat{R_k(f)}(\xi)  = (-|\xi|^2+k^2 + i0)^{-1}\widehat{f}(\xi), 
\end{equation}
we obtain the Lippmann-Schwinger integral equation
\begin{equation} \label{eq.int.3}
u_s(k,\theta,x) =R_k(q(\cdot) e^{ik\theta \cdot (\cdot)})(x) + R_k(q(\cdot)u_s(k,\theta,\cdot))(x).
\end{equation}

The existence and uniqueness of scattering solutions of (\ref{eq.int.1}) follows from a priori estimates for the resolvent operator $R_k$ and the previous integral equation (\ref{eq.int.3}). In the case of real potentials, this can be shown with  the help of Fredholm theory  for $k >0$, see for example \cite[pp. 79-82]{notasR}. Otherwise, since the norm of the operator $T(f)= R_k(qf)$  decays to zero as $k \to \infty$ in appropriate function spaces, we can also use a Neumann series expansion in (\ref{eq.int.3}) which will be convergent for  $k >k_0$ (in general $k_0 \ge 0$  will depend on some a priori bound of $q$). For our purposes it is enough to consider $q \in L^r(\RR^n)$, $r>n/2$ and compactly supported. Notice that, by the Sobolev embedding, this is satisfied if $q\in W^{\beta,2}(\RR^n)$ with $\beta>(n-4)/2$.  See \cite[p. 511]{BFRV10} for more details and references.

We can introduce now the inverse backscattering problem. If we insert (\ref{eq.int.3}) in (\ref{eq.int.2}),  we can expand  the Lippmann-Schwinger equation in a Neumann series, as we mentioned before.  Then we obtain the Born series expansion relating  the scattering amplitude and the potential in the Fourier transform side.
\begin{align}
\nonumber u_\infty (k,\theta,\theta') = \widehat{q}(\xi) + \sum^l_{j=2}\int_{\RR^n} e^{-ik \theta'\cdot y} (qR_k))^{j-1}(q(\cdot)e^{ik\theta \cdot (\cdot)} )(y) \,dy\\
 \label{eq.int.4} + \int_{\RR^n} e^{-ik \theta'\cdot y} (qR_k)^l(q(\cdot)u_s(k,\theta, \cdot) )(y) \,dy,
\end{align}
where $\xi = k(\theta'-\theta)$ and the last is the error term. Since we are considering complex potentials,  $u_\infty (k,\theta,\theta')$ is not defined for $k \le k_0$ as we have seen. Therefore we also have to ask  $k>k_0$ in (\ref{eq.int.4}). 

The problem of determining $q$ from the knowledge of the scattering amplitude is formally overdetermined in the sense that  the data $u_\infty(k,\theta,\theta')$ is described by $2n-1$ variables, while the unknown potential $q(x)$ has only $n$. We avoid the overdetermination by reducing to the backscattering data, assuming only  knowledge of $u_\infty(k,\theta,-\theta)$, for all $k > k_0$ and $\theta\in \SP^{n-1}$.  There are other possible choices to deal with this difficulty and  that gives rise to the fixed angle scattering problem and the full data scattering problem (see for example \cite{BFRV10,fix}). For backscattering data the problem is formally well determined, and the Born approximation $q_{B}$ is defined by the identity,
\begin{equation}  \label{eq:bornF}
 \widehat{q_{B}}(\xi):= u_\infty(k,\theta,-\theta), \hspace{4mm} \text{where} \hspace{3mm} \xi= -2k\theta.
 \end{equation}
Since in the case of complex potentials $u_\infty(k,\theta,-\theta)$ is not defined for $k\le k_0$, from now on we consider  that $q_B(x)$ is defined modulo a $C^\infty$ function. 

By (\ref{eq:bornF}), the condition  $k>k_0$ is equivalent to asking $|\xi|>2k_0$. Therefore, using the cut-off introduced before (\ref{eq:cutoff}) with $C_0>2k_0$, and assuming convergence of the series, we can write  (\ref{eq.int.4})   as follows
\begin{equation} \label{eq:Bornseries2}
 \chi(\xi)\widehat{q}_B(\xi) = \chi(\xi)\widehat{q}(\xi) + \sum_{j=2}^{\infty}\widehat{ \widetilde Q_{j}(q)}(\xi) ,
 \end{equation}
 where $\widetilde Q$ was defined in  (\ref{eq:cutoff}) and
\begin{equation} \label{eq:Qjraw} 
 \widehat{Q_{j}(q)}(\xi) =\int_{\RR^n} e^{ik \theta\cdot y} (qR_k)^{j-1}(q(\cdot)e^{ik\theta \cdot (\cdot)} )(y) \,dy,
 \end{equation}
again with  $\xi= -2k\theta$. 

The structure of  $\widehat{Q_j(q)}$ has  been recently studied in \cite{back}. 
Let $r\in (0,\infty)$, we introduce the  operator
\begin{equation} \label{eq:Sr}
 S_r(q)(\eta) = \frac{2}{|\eta|(1+r)}\int_{\Gamma_r(\eta)}  \widehat{q}(\xi) \widehat{q}(\eta-\xi) \, d\sigma_{r\eta}(\xi),
 \end{equation}
where $\Gamma_r(\eta)$ is the modified Ewald sphere, 
\begin{equation}  \label{eq:ewald}
 \Gamma_r(\eta) =  \{ \xi\in \RR^n : |\xi-\eta/2| = r|\eta/2|\}.
 \end{equation}
 We call $S_r$ the spherical operator of double dispersion since it involves a spherical integral and a radial parameter $r$.
In \cite[Proposition 3.1]{back} it is shown that the Fourier transform of the double dispersion operator can be decomposed as a sum of a spherical operator and principal value operator $P(q)$,
\begin{equation}  \label{eq:Q2strct}
 \widehat{Q_{2}(q)}(\eta) = S_1(q)(\eta) + P(q)(\eta),
 \end{equation}
where 
\begin{equation*} 
P(q)(\eta) = \pv \int_{0}^{\infty} \frac{1}{1-r} S_r(q)(\eta)   \, dr.
 \end{equation*}
As a consequence,  the main point to bound the double dispersion operator is to estimate $S_r(q)$. Higher order  $Q_j$ operators have a similar, though more complex structure, see \cite[Proposition 5.1]{back}.

We examine now the question of the convergence in Sobolev spaces of the Born series, an essential step to obtain results of recovery of singularities. Taking the inverse Fourier  transform of \eqref{eq:Bornseries2}, we can write, modulo a $C^\infty$ function that 
\begin{equation} \label{eq:bornseries}
q-q_B = \widetilde{Q}_2(q) + \sum_{j=3}^\infty \widetilde{Q}_j(q) .
\end{equation}
In fact,  Theorem 1.3 and Proposition 2.1 of  \cite{back} imply directly the following result.
\begin{proposition} \label{prop:convergence}
Let  $n\ge 2$, $\ell\ge 2$ and $\beta \ge 0$. If $q\in W^{\beta,2}(\RR^n)$  is compactly supported then the series $\sum_{j=\ell}^{\infty} \widetilde Q_j(q),$
converges  in $W^{\alpha,2}(\RR^n)$ provided we take $C_0 = C_0(n,\alpha,\beta,q)$ in $(\ref{eq:cutoff})$  large enough, and that the following condition holds, 
\begin{equation*}
  \alpha< \begin{cases}
 \beta + (\ell-1)(\beta- (n-3)/2), \quad if \quad (n-3)/2 \le  \beta < (n-1)/2, \\
  \beta + (\ell-1), \hspace{27 mm} if \quad (n-1)/2 \le \beta<\infty .
         \end{cases} 
\end{equation*}
\end{proposition}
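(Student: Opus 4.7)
The strategy is to reduce to the individual $W^{\alpha,2}$ estimates for $\widetilde Q_j(q)$ established in \cite[Theorem 1.3]{back}, and to exploit the cut-off parameter $C_0$ to promote these to a summable geometric series. Since the proposition is claimed to follow directly from two results in \cite{back}, the work consists in tracking how the constants in those bounds depend on $C_0$ and on $j$.

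First, for each fixed $j\ge \ell$, I would invoke \cite[Theorem 1.3]{back} to obtain a bound of the form $\norm{\widetilde Q_j(q)}_{W^{\alpha,2}}\le C_j\, \norm{q}_{W^{\beta,2}}^j$ under the condition on $\alpha$ stated in the proposition. The dichotomy on $\alpha$ corresponds to the worst case $j=\ell$: heuristically, each resolvent application in \eqref{eq:Qjraw} contributes $\min(\beta-(n-3)/2,1)$ derivatives of regularity gain, so $\widetilde Q_\ell$ picks up $(\ell-1)\min(\beta-(n-3)/2,1)$ beyond $\beta$, matching the stated range; higher-order terms only enjoy strictly better regularity, so their contribution is subdominant.

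Second, since $\widetilde Q_j$ is the high-frequency truncation supported on $|\xi|>C_0$, the spectral parameter $k=|\xi|/2$ appearing in \eqref{eq:Qjraw} satisfies $k\ge C_0/2$. Using the iterative structural representation of $\widehat{Q_j(q)}$ from \cite[Proposition 5.1]{back} together with the resolvent/multiplication bound in \cite[Proposition 2.1]{back}, the constant $C_j$ factors as $C_j\le C\,(C'\,C_0^{-\sigma}\norm{q}_{W^{\beta,2}})^{j-1}$ for some $\sigma>0$, reflecting that each additional application of $qR_k$ on the truncated range contributes a factor that decays in $C_0$. Choosing $C_0=C_0(n,\alpha,\beta,q)$ large enough that $C'C_0^{-\sigma}\norm{q}_{W^{\beta,2}}<1/2$ then renders $\sum_{j\ge \ell}\norm{\widetilde Q_j(q)}_{W^{\alpha,2}}$ a convergent geometric series in $W^{\alpha,2}(\RR^n)$.

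The main obstacle I anticipate is verifying that the constants produced by \cite[Theorem 1.3]{back} genuinely decay like a negative power of $C_0$, rather than being merely bounded uniformly. This rests on the decay of the weighted resolvent norm of $R_k$ as $k\to\infty$ when composed with multiplication by the compactly supported potential $q$, and on propagating that decay through the derivative gain inside the iterative estimate. Once this $C_0$-dependence is extracted cleanly from the proof of \cite[Theorem 1.3]{back}, the summability required by the proposition follows immediately.
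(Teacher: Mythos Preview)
The paper does not actually prove this proposition: it simply states that it follows directly from Theorem 1.3 and Proposition 2.1 of \cite{back}. Your proposal is a reasonable elaboration of how to assemble those two ingredients---individual $W^{\alpha,2}$ bounds on $\widetilde Q_j(q)$ from \cite[Theorem 1.3]{back}, and geometric decay in $j$ coming from the $k$-decay of $qR_k$ on the truncated frequency region (which is the content of \cite[Proposition 2.1]{back})---into summability of the series. This is precisely the intended mechanism, so your approach matches the paper's, only with more detail supplied than the paper itself gives.
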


With this result we can show that to prove \Cref{teo:main1} it is enough to show  estimate \eqref{eq:Q2HolderBound}. In fact if $q\in W^{\beta,2}(\RR^n)$ with $\beta>(n-2)/2$, then $\sum_{j=3}^\infty \widetilde{Q}_j(q)$  converges in $W^{\beta+1,2}(\RR^n)$, so \eqref{eq:SobolevIneq} gives the desired estimate for the series in the $\Lambda^{\alpha}(\RR^n)$ norm. \Cref{teo:corRadial} follows in the same way from \Cref{teo:main2}.

Before going to the next section, we want to highlight the following property of Sobolev norms that we will use frequently in this work.
\begin{remark} \label{remark:Sob}
We have that $W^{\beta,2}_\delta \subset W^{\beta',2}_{\delta'}$ if $\beta \ge \beta'$ and $\delta \ge \delta'$. This follows from the equivalence
\begin{equation*} 
  \norm{\jp{\cdot}^\delta \jp{D}^{\beta} f}_{L^2} \sim \norm{ \jp{D}^{\beta} \jp{\cdot}^\delta f}_{L^2},
  \end{equation*}
  and Plancherel theorem, see for example \cite[Definition 30.2.2]{HormanderIV}. We will also use the inequality
\[
\norm{x_if}_{W^{\beta,2}_\delta} \le C\norm{f}_{W^{\beta,2}_{\delta+1}},
\]  
(this can be proved for example for integer values of $\beta$ and extended by interpolation).
  \end{remark}

\section{ Proof of  Theorem \ref{teo:main1} } \label{sec:SrHolder}

To obtain estimates in the Hölder norm, we will use the inequality
\begin{equation} \label{eq:L1Lambda} \norm{f}_{\Lambda^\alpha} \le   C\norm{\widehat{f}}_{L^1_\alpha} ,
\end{equation}
(see \Cref{prop:L1Lamda} in the Appendix  for a short proof of this fact). By estimate \eqref{eq:L1Lambda}, we have that  \eqref{eq:Q2HolderBound} follows directly from the following proposition.
\begin{proposition} \label{prop:Q2:L1} 
Assume $q\in W^{\beta,2}(\RR^n)$ where $\beta > (n-2)/2$ and $n\ge3$.  Then we have that
\begin{equation*} 
\norm{\widehat{{Q}_2(q)}}_{L^1_\alpha}\le C \norm{q}_{W^{\beta,2}}\norm{q}_{W^{(n-2)/2,2}}, 
\end{equation*}
for all  $ \alpha<\beta - (n-2)/2$.
\end{proposition}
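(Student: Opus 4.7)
The plan is to use the decomposition $\widehat{Q_2(q)}(\eta) = S_1(q)(\eta) + P(q)(\eta)$ from \eqref{eq:Q2strct} and estimate each summand in the $L^1_\alpha(\RR^n)$ norm separately. For $S_1(q)$, I would exploit the geometric fact that on the Ewald sphere $\Gamma_1(\eta)$ the vectors $u := \xi$ and $v := \eta-\xi$ are orthogonal (by Thales' theorem, since $0$ and $\eta$ lie at opposite ends of a diameter of $\Gamma_1(\eta)$). Parametrizing $\xi = \eta/2+(|\eta|/2)\omega$, $\omega \in \SP^{n-1}$, gives
\[
S_1(q)(\eta) = 2^{-(n-1)}|\eta|^{n-2}\int_{\SP^{n-1}}\widehat{q}(u)\widehat{q}(v)\, d\omega,
\]
and I would apply Santaló's formula in spheres (\Cref{sec:santa}) to change variables from $(\eta,\omega) \in \RR^n \times \SP^{n-1}$ to orthogonal pairs $(u,v)\in\RR^n\times\RR^n$, $u \perp v$. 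The Jacobian cancels the $|\eta|^{n-2}$ factor, reducing $\|S_1(q)\|_{L^1_\alpha}$ to the bilinear integral
\[
\int_{\RR^n}du\int_{u^\perp}dv\,\frac{\langle(|u|^2+|v|^2)^{1/2}\rangle^\alpha}{|u|}\,|\widehat q(u)\widehat q(v)|,
\]
which I would bound by $\norm{q}_{W^{\beta,2}}\norm{q}_{W^{(n-2)/2,2}}$ via an asymmetric splitting of the weight and Cauchy--Schwarz. The analytic bottleneck is convergence of $\int_{\RR^n}\langle u\rangle^{2(\alpha-\beta)}|u|^{-2}\, du$: the condition $\alpha<\beta-(n-2)/2$ is precisely what ensures convergence at infinity, while $n\geq 3$ handles the origin. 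The asymmetry between $\norm{q}_{W^{\beta,2}}$ and $\norm{q}_{W^{(n-2)/2,2}}$ reflects the two different ways in which $\widehat q$ enters (full-space integration in $u$ versus hyperplane fiber $u^\perp$), with the critical exponent $(n-2)/2$ dictated by the dimension of the fiber.

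For the principal value $P(q)(\eta) = \pv\int_0^\infty(1-r)^{-1}S_r(q)(\eta)\, dr$, I would split the $r$-integration as $\int_0^\infty = \int_{|1-r|\geq\delta}+\int_{|1-r|<\delta}$ following \cite{fix,back}. The regular region would be controlled by uniform-in-$r$ absolute bounds on $\|S_r(q)\|_{L^1_\alpha}$, obtained by adapting the Santaló-based estimate to the family $\Gamma_r(\eta)$, whose orthogonality structure generalizes by a suitable rescaling of the parametrization. The singular region near $r=1$ would use the principal value cancellation: decompose $S_r(q)(\eta)/(1-r) = (S_r(q)(\eta)-S_1(q)(\eta))/(1-r) + S_1(q)(\eta)/(1-r)$, bounding the first piece by the modulus of continuity of $r\mapsto S_r(q)$ and the second by a Hilbert-transform-type argument in $r$.

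The main obstacle will be preserving the asymmetric dependence $\norm{q}_{W^{\beta,2}}\norm{q}_{W^{(n-2)/2,2}}$ uniformly in $r$, including near the principal value singularity at $r=1$, rather than settling for the weaker symmetric $\norm{q}_{W^{\beta,2}}^2$ bound that a naive Cauchy--Schwarz would produce. This demands that the asymmetric weight splitting used for $S_1$ extend robustly to the $r$-family $S_r$, together with a sharp control of the $r$-modulus of continuity of $S_r(q)$ compatible with the $L^1_\alpha$ norm.
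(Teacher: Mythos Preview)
Your overall architecture matches the paper's: use the splitting \eqref{eq:Q2strct} and bound $S_1(q)$ and $P(q)$ separately in $L^1_\alpha$, with Santal\'o's formula driving the $S_1$ estimate. But two steps in your plan for $P(q)$ would not go through as written.

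\textbf{The orthogonality does not persist for $r\neq 1$.} Your claim that the ``orthogonality structure generalizes by a suitable rescaling'' to $\Gamma_r(\eta)$ is false. A direct computation from $|\xi-\eta/2|^2=r^2|\eta|^2/4$ gives $\xi\cdot(\eta-\xi)=\tfrac{1-r^2}{4}|\eta|^2$, so $u\perp v$ holds \emph{only} at $r=1$; Thales' theorem has no analogue on $\Gamma_r(\eta)$ for $r\neq 1$, and no rescaling of the parametrization recovers it while keeping the arguments of $\widehat q$ equal to $\xi$ and $\eta-\xi$. This is precisely why the paper abandons Santal\'o for general $r$ and instead proves a uniform $L^1_\alpha$ bound on $K_r(\widehat f_1,\widehat f_2)$ (\Cref{lemma:KHolder}) via the Fubini-type \Cref{lemma:fubini} on the manifold $\{|\xi-\eta/2|=r|\eta|/2\}$ together with the spherical integral \Cref{lemma:integral2}. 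You will need some substitute of this kind to control $S_r$ uniformly in $r$.

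\textbf{The two-region principal value split loses a derivative.} Near $r=1$ you propose to bound $(S_r-S_1)/(1-r)$ by the $r$-modulus of continuity, i.e.\ essentially by $\sup_s|\partial_s S_s(q)(\eta)|$. But \eqref{eq:Kpoint} shows $|\partial_r S_r(q)(\eta)|$ carries an unavoidable factor $|\eta|$ (coming from differentiating the radius of the Ewald sphere), so $\partial_r S_r(q)$ is only controlled in $L^1_{\alpha-1}$, not $L^1_\alpha$. Your fixed-$\delta$ split therefore costs one derivative and misses the target range $\alpha<\beta-(n-2)/2$. The paper's \Cref{lemma:PV} repairs this with a \emph{three}-region decomposition using the $\eta$-dependent threshold $\delta_\eta=\delta\jp{\eta}^{-\tau}$: in the innermost region $|1-r|<\delta_\eta$ the constraint $\jp{\eta}\le(\delta/|1-r|)^{1/\tau}$ trades the extra $|\eta|$ for $r$-integrability, while the intermediate annulus $\delta_\eta<|1-r|<\delta$ is handled by a dyadic sum that costs only an $\varepsilon$ in the weight (hence the $L^p_{\alpha+\varepsilon}$ in the lemma). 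Also note that the $\partial_r$ step brings in $\widehat{x_iq}$, which is why the $P(q)$ bound in the paper (\Cref{prop:PVHolder}) needs the weighted norm $W_1^{\beta,2}$; the asymmetry you flag as the ``main obstacle'' is not the real difficulty.
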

In the introduction we have mentioned that \Cref{teo:main1} is optimal in the sense that it represents a weaker version of what is expected to be the best possible result in the Sobolev scale. For the interested reader we mention that \Cref{prop:Q2:L1} is also optimal (except possibly for the limiting case $\alpha = \beta -(n-2)/2$). This can be verified applying the counterexamples given in \cite[Section 5]{back}, the only necessary change is to use the norm $L^1_\alpha$ instead of $L^2_\alpha$ in the proof of Theorem 1.4 of the same paper.

\begin{proof}[Proof of $\Cref{teo:main1}$]
As mentioned before, the desired estimate for $Q_2(q)$ is obtained from \Cref{prop:Q2:L1} thanks to \eqref{eq:L1Lambda}. The result of recovery of singularities follows then as has been outlined  after \Cref{prop:convergence}.
\end{proof}

\subsection{Estimate of the spherical operator}

 To prove \Cref{prop:Q2:L1} we begin estimating the spherical operator $S_r(q)$. To do that, we need the following result to change the order of integration in the algebraic submanifold of $\RR^{n}\times \RR^n$ defined by  the equation $|\xi-\eta/2|=r^2|\eta/2|$ (recall the definition of $\Gamma_r(\eta)$ given in \eqref{eq:ewald}). We leave the proof for the Appendix, see \Cref{lemma:fubini2}.

\begin{lemma} \label{lemma:fubini}
Let $f \in C_c^\infty(\RR^n)$. Then we have that
\[\int_{\RR^n}\int_{\Gamma_r(\eta)}  f(\eta,\xi)\, \ds{r}(\xi)\, d\eta = \int_{\RR^n} \int_{N_r(\xi)} f(\eta,\xi) \frac{|\eta|}{|\xi|} \, d\sigma_{r,\xi}(\eta) \, d\xi, \]
where we denote by $\sigma_{r,\xi}$ the restriction of the Lebesgue measure to the hypersurface
\begin{equation} \label{eq:Nr}
N_r(\xi) := \{\eta \in \RR^n: |\xi-\eta/2|=r|\eta/2| \}.
\end{equation}
\end{lemma}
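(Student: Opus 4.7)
The plan is to view both iterated integrals as different disintegrations of one surface integral on the joint hypersurface
\begin{equation*}
M = \{(\eta, \xi) \in \RR^n \times \RR^n : F(\eta, \xi) = 0\}, \qquad F(\eta, \xi) := |\xi - \eta/2|^2 - r^2|\eta|^2/4,
\end{equation*}
with the factor $|\eta|/|\xi|$ arising as the ratio of the sizes of the normal gradients in the two variables. Since $\nabla_\xi F = 2(\xi - \eta/2)$ has length $r|\eta|$ on $\Gamma_r(\eta)$, the coarea formula expresses the inner spherical integral on the left-hand side distributionally as
\begin{equation*}
\int_{\Gamma_r(\eta)} f(\eta, \xi)\, \ds{r}(\xi) = r|\eta| \int_{\RR^n} f(\eta, \xi)\, \delta(F(\eta, \xi))\, d\xi,
\end{equation*}
and after integration in $\eta$ and an exchange of the order of integration (legitimate for compactly supported smooth $f$ by approximating $\delta$ with a standard mollifier $\phi_\varepsilon$, applying Fubini on $\RR^{2n}$, and passing to the limit) the left-hand side becomes
\begin{equation*}
r \int_{\RR^n} \int_{\RR^n} |\eta| f(\eta, \xi)\, \delta(F(\eta, \xi))\, d\eta\, d\xi.
\end{equation*}

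The key calculation is then to compute $|\nabla_\eta F|$ on $N_r(\xi)$. Writing $F = |\xi|^2 - \xi \cdot \eta + \tfrac{1-r^2}{4}|\eta|^2$, one has $\nabla_\eta F = -\xi + \tfrac{1-r^2}{2}\eta$, hence
\begin{equation*}
|\nabla_\eta F|^2 = |\xi|^2 - (1-r^2)\, \xi \cdot \eta + \tfrac{(1-r^2)^2}{4}|\eta|^2.
\end{equation*}
The constraint $F = 0$ is equivalent to $\xi \cdot \eta = |\xi|^2 + \tfrac{1-r^2}{4}|\eta|^2$; substituting this identity, the $|\eta|^2$ contributions cancel exactly and one is left with $|\nabla_\eta F|^2 = r^2|\xi|^2$ on $N_r(\xi)$. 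Applying the coarea formula now in the $\eta$ direction yields
\begin{equation*}
\int_{\RR^n} |\eta| f(\eta, \xi)\, \delta(F(\eta, \xi))\, d\eta = \frac{1}{r|\xi|} \int_{N_r(\xi)} |\eta| f(\eta, \xi)\, d\sigma_{r,\xi}(\eta),
\end{equation*}
and inserting this back into the previous display produces the right-hand side.

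The main obstacle is this gradient computation on $N_r(\xi)$: a direct expansion of $|\nabla_\eta F|^2$ involves both $\xi \cdot \eta$ and $|\eta|^2$, and only after imposing the surface equation do all $\eta$-dependent terms conspire to cancel, leaving the clean factor $r|\xi|$. Once this is in hand, the argument is bookkeeping: the exchange of integrations is standard by mollification, and the measure-zero sets $\{\eta = 0\}$ and $\{\xi = 0\}$ contribute nothing to either iterated integral for $f \in C_c^\infty$.
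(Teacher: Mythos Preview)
Your proof is correct. The gradient computation is right: on $N_r(\xi)$ one indeed has $|\nabla_\eta F|^2 = r^2|\xi|^2$, and combined with $|\nabla_\xi F| = r|\eta|$ on $\Gamma_r(\eta)$ the coarea formula in each direction gives the factor $|\eta|/|\xi|$.

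The paper takes a different route. It proves a slightly more general statement (spheres $|\xi - a\eta| = b|\eta|$) by working directly with differential forms: it writes the volume forms $\omega_\eta \wedge d\eta$ and $\omega_\xi \wedge d\xi$ on the joint hypersurface in coordinates, uses the structural relation $dF=0$ to eliminate one of the $d\xi_i$, and compares the resulting expressions to read off the ratio $|\eta|/|\xi|$. Your coarea/delta-function argument packages the same content more efficiently: the ratio $|\nabla_\xi F|/|\nabla_\eta F|$ is exactly what the paper's form comparison computes. One concrete advantage of your approach is that it treats $r=1$ and $r\neq 1$ uniformly (when $r=1$, $\nabla_\eta F = -\xi$ and the formula $|\nabla_\eta F|=r|\xi|$ still holds), whereas the paper's appendix lemma is stated only for $a\neq b$ and defers the $r=1$ case to the literature. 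The paper's version, on the other hand, is written so as to be reusable for the two-parameter Ewald spheres needed elsewhere.
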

 If f $r\neq 1$, $N_r$ is the sphere of center $\frac{2\xi}{1-r^2}$ and radius $\frac{2|\xi|r}{|1-r^2|}$, otherwise for $r=1$ it is  an hyperplane.
We also give the following lemma which is also proved in the  Appendix by direct computation.
\begin{lemma}\label{lemma:integral2}
Let $\SP_\rho\subset\RR^n$ be any sphere of radius $\rho$ and let $\sigma_\rho$ be its Lebesgue measure.
Let  $a,b\ge 0$  satisfy $a+b > n-1$ and $a< n-1$, for all $x\in\RR^n$ we  have that
\begin{equation} \label{eq.intl2} 
 \int_{S_\rho} \frac{1}{|x-y|^a \jp{x-y}^b} \, d\sigma_\rho(y) \le C_{a,b} ,
 \end{equation}
 where the constant $C_{a,b}$ only depends on the parameters $a$ and $b$.
\end{lemma}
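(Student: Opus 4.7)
The plan is to split the integral based on the magnitude of $|x-y|$ and reduce each piece to a dyadic sum driven by a uniform geometric estimate for the intersection of a sphere with a ball. Writing the domain as the union of $\SP_\rho \cap B(x,1)$ and $\SP_\rho \setminus B(x,1)$, on the inner piece one has $\jp{x-y} \ge 1$, so the integrand is dominated by $|x-y|^{-a}$; on the outer piece $\jp{x-y} \sim |x-y|$, so it is dominated by $|x-y|^{-(a+b)}$. Thus the roles of the two hypotheses $a<n-1$ and $a+b>n-1$ separate cleanly, and it only remains to bound each piece by a constant depending on $n$, $a$, $b$, uniformly in $x$ and $\rho$.

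The key geometric input is the uniform estimate
\[ \sigma_\rho\bigl(\SP_\rho \cap B(x,R)\bigr) \le C_n\, R^{n-1}, \qquad x\in\RR^n,\ R>0, \]
with $C_n$ depending only on the ambient dimension. For $R$ comparable to or larger than $\rho$ this is immediate from $\sigma_\rho(\SP_\rho) \lesssim \rho^{n-1}$; for $R \ll \rho$ it follows by parametrising $\SP_\rho$ near the point closest to $x$ as a graph over the tangent hyperplane and projecting the intersection onto an $(n-1)$-disk of radius $\le R$. This cap-measure bound is the one step that needs a geometric argument, and it is routine.

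With this in hand, a dyadic decomposition of each piece finishes the proof. On the inner region one estimates
\[ \int_{\SP_\rho \cap B(x,1)} \frac{d\sigma_\rho(y)}{|x-y|^a} \le \sum_{k\ge 0} 2^{(k+1)a}\,\sigma_\rho\bigl(\SP_\rho\cap B(x,2^{-k})\bigr) \le C\sum_{k\ge 0} 2^{-k(n-1-a)}, \]
which converges precisely because $a<n-1$. On the outer region one estimates
\[ \int_{\SP_\rho \setminus B(x,1)} \frac{d\sigma_\rho(y)}{|x-y|^{a+b}} \le \sum_{k\ge 0} 2^{-k(a+b)}\,\sigma_\rho\bigl(\SP_\rho\cap B(x,2^{k+1})\bigr) \le C\sum_{k\ge 0} 2^{-k(a+b-n+1)}, \]
which converges precisely because $a+b>n-1$. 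Both bounds are uniform in $x$ and $\rho$, so adding them yields a constant depending only on $a$ and $b$, as required. The main (minor) obstacle is the cap-measure estimate; after that the argument is pure bookkeeping.
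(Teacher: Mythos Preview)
Your argument is correct. The cap-measure estimate $\sigma_\rho(\SP_\rho\cap B(x,R))\le C_n R^{n-1}$ is standard and your sketch of it is adequate; once it is in hand the two dyadic sums converge exactly under the stated hypotheses $a<n-1$ and $a+b>n-1$, and all constants are visibly uniform in $x$ and $\rho$.

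The paper takes a different route. It fixes the direction $\omega=x/|x|$ and projects the sphere orthogonally onto the hyperplane $P_\omega=\{z:z\cdot\omega=0\}$. Since $P(x)=0$ one has $|x-y|\ge |P(y)|$ and $\jp{x-y}\ge\jp{P(y)}$, so the integrand is dominated by $|P(y)|^{-a}\jp{P(y)}^{-b}$. Restricting to the half-sphere nearest $x$ (minus a thin equatorial band, to keep the Jacobian of $P$ bounded uniformly in $\rho$) and changing variables $z=P(y)$ reduces everything to the single integral $\int_{\RR^{n-1}}|z|^{-a}\jp{z}^{-b}\,dz$, which is finite precisely when $a<n-1$ and $a+b>n-1$. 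So the paper avoids the dyadic decomposition entirely by collapsing the problem to a flat $(n-1)$-dimensional integral in one step. Your approach trades that single geometric reduction for the more modular cap-measure bound plus two routine sums; it is slightly longer but perhaps more transparent about where each hypothesis is used, and it generalises immediately to other hypersurfaces satisfying the same cap bound.
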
 
To simplify later computations we define the operator  
\begin{equation}  \label{eq:def:K}
{{K}_{r}(h_1,h_2)}(\eta) := \frac{1}{|\eta|}\int_{\Gamma_{r}(\eta)}|h_1(\xi)| |h_2(\eta-\xi)| \, \ds{r}(\xi).
\end{equation}
We can control the spherical operator if we estimate $K_r$ since
\begin{equation}  \label{eq:KandS}
\left| {S}_r(q)(\eta) \right| \le  \frac{2}{1+r}{{K}_{r}(\widehat{q},\widehat{q})(\eta)}.
\end{equation}
\begin{lemma} \label{lemma:KHolder}
Let $n\ge 3$ and $f_1,f_2 \in W^{\beta,2}(\RR^n)$ with $\beta>(n-2)/2$. Then the estimate
\begin{equation} \label{eq.thm.sph.1}
\norm{K_r(\widehat{f_1},\widehat{f_2})}_{L^1_{\alpha}}\le C  \norm{f_1}_{W^{\beta,2}} \norm{f_2}_{W^{(n-2)/2,2}} + C  \norm{f_2}_{W^{\beta,2}} \norm{f_1}_{W^{(n-2)/2,2}}, 
\end{equation}
 holds when $\alpha<\beta - (n-2)/2$.
\end{lemma}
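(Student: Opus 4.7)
My plan is to distribute the Hölder weight, reduce to unweighted bilinear $L^1$ estimates for $K_r$, and then handle those via a double Cauchy--Schwarz argument anchored on \Cref{lemma:fubini} and \Cref{lemma:integral2}.

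First, the elementary inequality $\jp{\eta}^\alpha \lesssim \jp{\xi}^\alpha + \jp{\eta-\xi}^\alpha$ (valid for $\alpha \ge 0$), applied inside the spherical integrand of $K_r$, splits the weighted norm as
\[
\norm{K_r(\widehat{f_1}, \widehat{f_2})}_{L^1_\alpha} \lesssim \norm{K_r(\jp{\cdot}^\alpha|\widehat{f_1}|, |\widehat{f_2}|)}_{L^1} + \norm{K_r(|\widehat{f_1}|, \jp{\cdot}^\alpha|\widehat{f_2}|)}_{L^1}.
\]
Since $\jp{\cdot}^\alpha|\widehat{f_i}| = |\widehat{\jp{D}^\alpha f_i}|$ and $\jp{D}^\alpha f_i \in W^{\beta-\alpha,2}$, it suffices to prove the unweighted bilinear estimate
\[
\norm{K_r(\widehat{g_1}, \widehat{g_2})}_{L^1} \le C \norm{g_1}_{W^{\gamma_1,2}} \norm{g_2}_{W^{(n-2)/2,2}}, \quad \gamma_1 > (n-2)/2,
\]
with $C$ uniform in $r \in (0,\infty)$. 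The two summands of \eqref{eq.thm.sph.1} then arise by applying this with $\gamma_1 = \beta-\alpha$ (resp.\ $\gamma_1=\beta$), noting that $K_r$ is symmetric in its two slots via the involution $\xi \mapsto \eta-\xi$ on $\Gamma_r(\eta)$.

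Next, \Cref{lemma:fubini} rewrites the unweighted integral as
\[
\int_{\RR^n} \frac{|\widehat{g_1}(\xi)|}{|\xi|} \int_{N_r(\xi)} |\widehat{g_2}(\eta-\xi)| \, d\sigma_{r,\xi}(\eta) \, d\xi.
\]
Fixing $\epsilon > 0$ with $\epsilon < \gamma_1 - (n-2)/2$, a first Cauchy--Schwarz on the inner integral uses the weight $\jp{\eta-\xi}^{-(n-1+2\epsilon)}$, whose dual integral is bounded uniformly in $\xi$ and $r$ by \Cref{lemma:integral2} (with $a=0$, $b=n-1+2\epsilon$). A second Cauchy--Schwarz in $\xi$, with a weight designed so that the $g_1$-factor matches $\norm{g_1}_{W^{\gamma_1,2}}^2$, leaves a double integral on the $g_2$-side; swapping the roles of $\xi$ and $\eta$ through a second use of \Cref{lemma:fubini} and changing variables $\omega = \eta-\xi$ (which keeps $\omega$ on $\Gamma_r(\eta)$), followed by a final application of \Cref{lemma:integral2}, identifies the remainder as a constant multiple of $\int \jp{\omega}^{n-2}|\widehat{g_2}(\omega)|^2 \, d\omega = \norm{g_2}_{W^{(n-2)/2,2}}^2$.

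The main obstacle is controlling the $|\xi|^{-1}$ factor produced by the Fubini step. For $r \ne 1$ the sphere $N_r(\xi)$ shrinks to a point of radius $\sim |\xi|$ as $\xi \to 0$, so its surface measure vanishes like $|\xi|^{n-1}$ and more than compensates for $|\xi|^{-1}$ in dimension $n \ge 3$; the weight in the outer Cauchy--Schwarz can then be balanced against this gain to absorb the origin singularity. At $r=1$, however, $N_1(\xi)$ becomes an infinite hyperplane and this compensation is lost, so that case must be handled separately via the Santaló-type spherical identity announced in the introduction (proved in \Cref{sec:santa}). Uniformity in $r\in(0,\infty)$ then follows because all constants from \Cref{lemma:integral2} depend only on the exponents $a,b$, not on the geometry of the sphere. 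The $\epsilon$-loss in the weighted spherical Cauchy--Schwarz is exactly what excludes the endpoint $\alpha = \beta - (n-2)/2$.
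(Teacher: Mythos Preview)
Your Fubini / double Cauchy--Schwarz skeleton is exactly the paper's, but the weight you choose in the inner Cauchy--Schwarz creates a genuine gap. With $a=0$, $b=n-1+2\epsilon$ in \Cref{lemma:integral2}, after carrying the Jacobians through the second Fubini, the substitution $\omega=\eta-\xi$, and the third Fubini, what remains on the $g_2$ side is
\[
\int_{\RR^n}\frac{\jp{\omega}^{n-1+2\epsilon}}{|\omega|}\,|\widehat{g_2}(\omega)|^2
\int_{N_r(\omega)}\frac{d\sigma_{r,\omega}(\eta)}{|\eta-\omega|\jp{\eta-\omega}^{2\gamma_1}}\,d\omega,
\]
and the factor $|\omega|^{-1}$ is \emph{not} controlled by $\norm{g_2}_{W^{(n-2)/2,2}}^2$ (concentrate $\widehat{g_2}$ near the origin to see this). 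Your proposed remedy, that $N_r(\xi)$ has radius $\frac{2r}{|1-r^2|}\,|\xi|$, fails to give an $r$-uniform constant: the radius blows up as $r\to 1$, so excising only the single value $r=1$ via Santal\'o is not enough. In any case, turning the surface-area gain into the estimate you want would implicitly require $\widehat{g_2}\in L^\infty$, which the hypothesis does not provide.

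The paper's cure is to take $a=1$ in \Cref{lemma:integral2}: the inner weight is $|\eta-\xi|^{1/2}\jp{\eta-\xi}^{(n-2)/2}$, with dual $\int_{N_r(\xi)}|\eta-\xi|^{-1}\jp{\eta-\xi}^{-(n-2+2\varepsilon)}\,d\sigma\le C$ (this is where $n\ge 3$ enters, via $a=1<n-1$). The factor $|\eta-\xi|$ produced on the $g_2$ side becomes $|\omega|$ after the change of variable and exactly cancels the residual $|\omega|^{-1}$, leaving a clean $\jp{\omega}^{n-2}$ weight. The paper also works on the half-sphere $\Gamma_r^+(\eta)$, where $|\eta|\le 2|\xi|$; this lets it swap $|\xi|^{-2}$ for $|\eta|^{-2}$ before reversing Fubini, and lets it use $\jp{\xi}^\varepsilon\ge\jp{\eta-\xi}^\varepsilon$ to push the $\varepsilon$-loss onto the $f_1$ norm, which is why the stated bound carries the sharp exponent $(n-2)/2$ on the second factor rather than $(n-2)/2+\varepsilon$.
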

\begin{proof}
We consider the case $r\neq1$. The case $r=1$ can be proved similarly, though we provide a somewhat more elegant estimate in \Cref{prop:S1Holder}, using a special case of Santaló's formula.

 In the first place, observe that the change of variables $\xi'=\eta-\xi$ leaves invariant the Ewald sphere $\Gamma_r(\eta)$, since it changes a point by its antipodal point on the sphere. We define $\Gamma_r^+(\eta) := \{\xi\in \Gamma_r(\eta):|\xi|\ge|\eta-\xi|\}$, which is the half-sphere at greater distance from the origin. Then, using the mentioned change of variables we can reduce the integrals   over $\Gamma_r(\eta)$ to integrals over $\Gamma_r^+(\eta)$,
\begin{multline*}
\norm{K_r(\widehat{f_1},\widehat{f_2})}_{L^1_{\alpha}}  =  \int_{\RR^n} \frac{\jp{\eta}^\alpha}{|\eta|} \int_{\Gamma_r^+(\eta)} |\widehat{f_1}(\xi) ||\widehat{f_2}(\eta-\xi)| \, \ds{r}(\xi) \, d\eta  \\
+ \int_{\RR^n} \frac{\jp{\eta}^\alpha}{|\eta|} \int_{\Gamma_r^+(\eta)}|\widehat{f_2}(\xi) ||\widehat{f_1}(\eta-\xi)| \, \ds{r}(\xi) \, d\eta.
\end{multline*}
 We are going to estimate only the first term since the estimate of the second follows simply by interchanging the roles of $f_1$ and $f_2$. Let's denote it by $I_1$. 
 
 We define the set
  \[{N_r^+(\xi) := \{\eta \in N_r(\xi): |\xi|\ge|\eta-\xi|\}},\]
   and here we have that $|\eta| \le 2|\xi|$. Consider now $\varepsilon>0$ and  fix $\beta =\alpha+(n-2)/2+2\varepsilon$. Changing the order of integration (\Cref{lemma:fubini}) we obtain
\begin{align}
\nonumber I_1 &\le C\int_{\RR^n} \frac{1}{|\eta|\jp{\eta}^{(n-2)/2+\varepsilon}}\int_{\Gamma_r^+(\eta)}|\widehat{f_1}(\xi)|\jp{\xi}^{\beta-\varepsilon}| \widehat{f_2}(\eta-\xi)| \,\ds{r}(\xi) \, d\eta \\
\nonumber &=   C\int_{\RR^n} |\widehat{f_1}(\xi)|\jp{\xi}^{\beta} \int_{N_r^+(\xi)}\frac{|\widehat{f_2}(\eta-\xi)| }{|\xi|\jp{\xi}^{\varepsilon}\jp{\eta}^{(n-2)/2+\varepsilon}} d\sigma_{r,\xi}(\eta) \, d\xi \\
 \label{eq.thm.spH.2} &\le  C\norm{f_1}_{W^{\beta,2}} \,I_2,
\end{align}
where to get the last inequality  we have applied Cauchy-Schwarz inequality in the $\xi$ variables so that
\[ I_2 := \left( \int_{\RR^n} \left| \int_{N_r^+(\xi)}\frac{1}{|\xi|\jp{\xi}^{\varepsilon}\jp{\eta}^{(n-2)/2+\varepsilon}}| \widehat{f_2}(\eta-\xi)| d\sigma_{r,\xi}(\eta)\right|^2  d\xi \right)^{\frac{1}{2}}.\]
 Now, let us consider the integral in $N^+_r(\xi)$. Taking into account that $\jp{\xi}^{\varepsilon}\, \ge \, \jp{\xi-\eta}^{\varepsilon}$, we multiply and divide by $|\eta-\xi|^{1/2}\jp{\eta-\xi}^{(n-2)/2}$ before using Cauchy-Schwarz inequality in the $\eta$ variable,
\begin{align}
 \nonumber &\left( \int_{N_r^+(\xi)}\frac{1}{\jp{\xi}^{\varepsilon}\jp{\eta}^{(n-2)/2+\varepsilon}}| \widehat{f_2}(\eta-\xi)| d\sigma_{r,\xi}(\eta)\right)^2 \\
 \nonumber &\le   \int_{N_r^+(\xi)}\frac{1}{\jp{\eta}^{n-2+2\varepsilon}} |\widehat{f_2}(\eta-\xi)|^2 |\eta-\xi|\jp{\eta-\xi}^{n-2} d\sigma_{r,\xi}(\eta) \times \dots \\
 \label{eq.thm.spH.3} & \hspace{40mm} \dots \times \int_{N_r(\xi)} \frac{1}{|\eta-\xi|\jp{\eta-\xi}^{n-2 +2\varepsilon}} d\sigma_{r,\xi}(\eta).
\end{align}
But, since  $n \ge 3$,  we can apply  \Cref{lemma:integral2} with $a=1$ and $b=n-2+2\varepsilon$  to get
\begin{equation} \label{eq.thm.spH.4}
\int_{N_r(\xi)} \frac{1}{|\eta-\xi|\jp{\eta-\xi}^{n-2+2\varepsilon}} \, d\sigma_{r,\xi}(\eta) \le C,
\end{equation}
where $C$ does not depend in any way on the sphere $N_r(\xi)$.
If we put together (\ref{eq.thm.spH.3})  and  (\ref{eq.thm.spH.4}), using that $|\eta| \le 2|\xi|$ and changing again the order of integration  we get
\begin{align*}
\nonumber  &I_2 \le \left( \int_{\RR^n}  \int_{N_r^+(\xi)}\frac{1}{|\xi|^{2}\jp{\eta}^{n-2+2\varepsilon}} |\widehat{f_2}(\eta-\xi)|^2 |\eta-\xi|\jp{\eta-\xi}^{n-2} \, d\sigma_{r,\xi}(\eta) \, d\xi \right)^{1/2} \\
\nonumber &\le  C   \left( \int_{\RR^n} \frac{1}{|\eta|^{2}\jp{\eta}^{n-2+2\varepsilon}}   \int_{\Gamma_r(\eta)} |\widehat{f_2}(\eta-\xi)|^2 |\eta-\xi|\jp{\eta-\xi}^{n-2} \, \ds{r}(\xi) \, d\eta \right)^{1/2} \\
   &\le C  \left( \int_{\RR^n} \frac{1}{|\eta|^{2}\jp{\eta}^{n-2+2\varepsilon}}   \int_{\Gamma_r(\eta)} |\widehat{f_2}(\xi')|^2 |\xi'|\jp{\xi'}^{n-2} \, \ds{r}(\xi') \, d\eta \right)^{1/2},
\end{align*}
where we have used in the last line the change of variables $\xi'=\xi-\eta$.
Therefore, if we change the order of integration for the last time, returning to (\ref{eq.thm.spH.2}) we finally obtain
\begin{align*}
I_1 &\le C  \norm{f_1}_{W^{\beta,2}}\left( \int_{\RR^n} |\widehat{f_2}(\xi)|^2 \jp{\xi}^{n-2} \int_{N_r(\xi)}  \frac{1}{|\eta|\jp{\eta}^{n-2+2\varepsilon}}  \, d\sigma_{r,\xi}(\eta) \, d\xi \right)^{1/2} \\
&\le C  \norm{f_1}_{W^{\beta,2}} \norm{f_2}_{W^{(n-2)/2,2}} , 
\end{align*}
where we have applied  \Cref{lemma:integral2} to the integral in $N_r(\xi)$. Then the previous estimate yields
\[\norm{K_r(\widehat{f_1},\widehat{f_2})}_{L^1_{\alpha}}  \le C \left( \norm{f_1}_{W^{\beta,2}} \norm{f_2}_{W^{(n-2)/2,2}} + \norm{f_2}_{W^{\beta,2}} \norm{f_2}_{W^{(n-2)/2,2}}\right),  \]
for $\alpha = \beta-(n-2)/2-2\varepsilon$. Taking $\varepsilon>0$ as small as necessary, we recover the statement of the lemma.
\end{proof}

\subsection{ Estimate of the principal value operator}

\begin{proposition} \label{prop:PVHolder}
Let $n \ge 3$ and  $q \in W^{\beta,2}(\RR^n)$ with $\beta>(n-2)/2$. Then we have that 
\begin{equation} \label{eq:PVHolder}
\norm{{P}(q)}_{L^1_\alpha} \le C \norm{q}_{W_1^{\beta,2}} \norm{q}_{W_1^{(n-2)/2,2}},
\end{equation}
if  $\alpha<\beta-(n-2)/2$.
\end{proposition}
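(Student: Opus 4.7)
My strategy is to isolate the $r=1$ singularity via a smooth cutoff in $r$, handle the regular part by \Cref{lemma:KHolder} applied fiberwise, and treat the singular part by standard principal-value subtraction combined with differentiation of $S_r(q)$ in $r$. Let $\psi\in C_c^\infty((0,\infty))$ satisfy $\psi\equiv 1$ on $[3/4,5/4]$ and $\operatorname{supp}\psi\subset[1/2,2]$, and split
\[
P(q) = \int_0^\infty \frac{1-\psi(r)}{1-r}\,S_r(q)\,dr + \pv\int_0^\infty \frac{\psi(r)}{1-r}\,S_r(q)\,dr =: P_{\rm far}(q) + P_{\rm near}(q).
\]

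For $P_{\rm far}$ the kernel $(1-\psi(r))/(1-r)$ is pointwise bounded, so the domination $|S_r(q)|\le \tfrac{2}{1+r}K_r(\widehat q,\widehat q)$ together with \Cref{lemma:KHolder} yields a uniform-in-$r$ bound for $\|S_r(q)\|_{L^1_\alpha}$. Absolute integrability in $r$ then follows: at $r\to 0$ from the scaling $d\sigma_{r\eta}\sim r^{n-1}$ of the shrinking Ewald sphere, and at $r\to\infty$ from the fact that $\Gamma_r(\eta)$ is supported at distance $\sim r|\eta|/2$ from the origin, where the Fourier decay of $\widehat q\in W^{\beta,2}$ with $\beta>(n-2)/2$ supplies an extra $r^{-(1+\varepsilon)}$ factor in the operator norm (a standard tail estimate already in the spirit of \cite{back}).

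For $P_{\rm near}(q)$ I use the standard PV regularization
\[
P_{\rm near}(q)(\eta) = \int \psi(r)\,\frac{S_r(q)(\eta)-S_1(q)(\eta)}{1-r}\,dr + c_\psi\,S_1(q)(\eta),
\]
with $c_\psi=\pv\int\psi(r)/(1-r)\,dr$ finite. The $S_1(q)$ summand is controlled by the $r=1$ spherical bound announced in the proof of \Cref{lemma:KHolder} (a cleaner statement proved in the appendix via Santaló's formula). In the difference quotient, the singular factor $(1-r)^{-1}$ is absorbed by writing
\[
S_r(q)(\eta)-S_1(q)(\eta) = (r-1)\int_0^1 (\partial_s S_s(q))(\eta)\big|_{s=1+t(r-1)}\,dt,
\]
so the matter reduces to a uniform bound on $\|\partial_s S_s(q)\|_{L^1_\alpha}$ over $s\in[1/2,2]$. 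Parametrizing $\Gamma_s(\eta)$ by $\xi=\eta/2+s(|\eta|/2)\omega$, $\omega\in\SP^{n-1}$, and using $\nabla\widehat q=-i\widehat{xq}$, the derivative $\partial_s S_s(q)$ reduces to a finite sum of spherical integrals whose integrands are products of $\widehat q$ and $\omega\cdot\widehat{xq}$. These are dominated by $K_s$-type operators and estimated by \Cref{lemma:KHolder} with the weight-shift $\norm{x_jq}_{W^{\beta,2}}\le C\norm{q}_{W^{\beta,2}_1}$ supplied by \Cref{remark:Sob}, giving the claimed weighted bound after integration in $s$ and $r$.

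The main obstacle is the factor $|\eta|/2$ produced by the chain rule at each differentiation (coming from the radius of $\Gamma_s(\eta)$): taken naively this would promote the effective weight on the left from $\jp\eta^\alpha$ to $\jp\eta^{\alpha+1}$, costing a full derivative and destroying the sharp range. The resolution exploits the spherical geometry: on $\Gamma_s(\eta)$ the quantity $\omega\cdot\widehat{xq}(\xi)$ equals $-i\,\partial_\nu\widehat q(\xi)$, the outward normal derivative, and an integration-by-parts / Green-identity argument in a thin shell around $\Gamma_s$ (in the spirit of the PV analysis in \cite{back,fix}) redistributes this $|\eta|$ against the $|\eta|^{-1}$ already built into the definition of $K_s$. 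The strict inequality $\alpha<\beta-(n-2)/2$ provides exactly the margin needed to close this balance uniformly in $s\in[1/2,2]$, and the $\jp x$ weights in the hypothesized norms absorb the resulting $\widehat{xq}$ factors via \Cref{remark:Sob}.
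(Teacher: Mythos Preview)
Your decomposition with an $\eta$-\emph{independent} cutoff $\psi(r)$ leaves a genuine gap in the near-singularity piece. After writing $S_r-S_1=(r-1)\int_0^1\partial_s S_s\,dt$ you need a uniform bound on $\|\partial_s S_s(q)\|_{L^1_\alpha}$ for $s\in[1/2,2]$. But the pointwise inequality \eqref{eq:Kpoint} gives
\[
|\partial_s S_s(q)(\eta)| \le C K_s(\widehat q,\widehat q)(\eta) + C|\eta|\sum_i K_s(\widehat{x_iq},\widehat q)(\eta),
\]
so the $L^1_\alpha$ norm of the second group is $\sum_i\|K_s(\widehat{x_iq},\widehat q)\|_{L^1_{\alpha+1}}$, which via \Cref{lemma:KHolder} only closes for $\alpha+1<\beta-(n-2)/2$, losing a full derivative. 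Your proposed fix (``integration by parts / Green identity in a thin shell'') does not recover this loss: the factor $\omega\cdot\nabla_\xi\widehat q(\xi_+)$ is a \emph{radial} derivative in the $\omega$-variable (since $\xi_+=\eta/2+s\tfrac{|\eta|}{2}\omega$), so it is transverse to the sphere of integration and cannot be removed by integrating by parts on $\SP^{n-1}$; passing to a thin shell just reproduces $\partial_s$ and you go in circles. Nor is the $|\eta|^{-1}$ in the definition of $K_s$ ``available'' to cancel the extra $|\eta|$: it has already been spent in the proof of \Cref{lemma:KHolder}. The $\varepsilon$-slack from the strict inequality is arbitrarily small and cannot substitute for a full power of $\jp{\eta}$.

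The paper resolves exactly this point by making the cutoff \emph{$\eta$-dependent}: one sets $\delta_\eta=\delta\jp{\eta}^{-\tau}$ and splits $P(q)=P_A+P_B+P_C$ according to $|1-r|\le\delta_\eta$, $\delta_\eta<|1-r|<\delta$, and $|1-r|\ge\delta$. On $P_A$ the constraint $|1-r|<\delta_\eta$ gives $\jp{\eta}\le C|1-r|^{-1/\tau}$, which converts one power of $\jp{\eta}$ into an integrable factor $|1-r|^{-1/\tau}$; hence one only needs $\|\partial_r S_r(q)\|_{L^1_{\alpha-1}}$, and now the $|\eta|$ in \eqref{eq:Kpoint} is harmless. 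The intermediate zone $P_B$ is handled by a dyadic decomposition in $|1-r|$, and here the $\varepsilon$-slack is used to sum the dyadic pieces (see \Cref{lemma:PV}). Your far piece $P_{\rm far}$ is essentially the paper's $P_C$ and is fine; incidentally the integrability at $r\to\infty$ follows directly from the explicit $(1+r)^{-1}$ in \eqref{eq:KandS} combined with $|1-r|^{-1}\sim r^{-1}$, without invoking extra Fourier decay of $\widehat q$.
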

We are going to reduce the proof of this proposition to the following couple of lemmas.
\begin{lemma} \label{lemma:PV}
 Let $q\in \mathcal S(\RR^n)$, $1 \le p <\infty $ and $\phi \in C^\infty(\RR^n)$. Then we have that
\begin{multline*}
\norm{\phi \widehat{P(q)}}_{L^p_\alpha} \le \\ C(\gamma, \varepsilon,\delta) \left(\essup_{r \in (0,\infty)} \left[ (1+r)^{\gamma} \norm{\phi S_r(q)}_{L^p_{\alpha +\varepsilon}} \right] +   \essup_{r \in (1-\delta,1+ \delta)} \norm{\phi \partial_r S_r(q)}_{L^p_{\alpha-1}}\right),
\end{multline*}   
for any $\gamma,\varepsilon >0$.
\end{lemma}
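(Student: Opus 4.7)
The plan is to split the principal value integral defining $P(q)$ into a bulk piece, where $r$ is bounded away from the singularity at $r=1$, and a local piece near $r=1$, and estimate each separately. That is, write
\begin{equation*}
P(q)(\eta)=\int_{(0,1-\delta)\cup(1+\delta,\infty)}\frac{S_r(q)(\eta)}{1-r}\,dr+\pv\int_{1-\delta}^{1+\delta}\frac{S_r(q)(\eta)}{1-r}\,dr.
\end{equation*}

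For the bulk piece, I would apply Minkowski's integral inequality directly in $L^p_\alpha$ (with the multiplier $\phi$). On $(0,1-\delta)$ the denominator satisfies $|1-r|\ge\delta$ and the interval has finite length, so the contribution is bounded by $C(\delta)\,\essup_r\|\phi S_r(q)\|_{L^p_\alpha}$, which is in turn bounded by $C(\delta)\,\essup_r\|\phi S_r(q)\|_{L^p_{\alpha+\varepsilon}}$ using the inclusion $L^p_{\alpha+\varepsilon}\hookrightarrow L^p_\alpha$ recorded in \Cref{remark:Sob}. On $(1+\delta,\infty)$ the role of the hypothesis is to absorb the slow decay: writing $M=\essup_r(1+r)^{\gamma}\|\phi S_r(q)\|_{L^p_{\alpha+\varepsilon}}$, the $r$-integral becomes $M\int_{1+\delta}^\infty \frac{dr}{(r-1)(1+r)^{\gamma}}$, which converges for any $\gamma>0$ and $\delta>0$.

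The core step is the local piece. Here I would use the classical principal value subtraction: since $\pv\int_{1-\delta}^{1+\delta}(1-r)^{-1}\,dr=0$, one may replace $S_r(q)(\eta)$ by $S_r(q)(\eta)-S_1(q)(\eta)$ without changing the value of the integral, yielding a genuinely convergent integral with bounded integrand. The fundamental theorem of calculus combined with an obvious change of variables gives
\begin{equation*}
\frac{S_r(q)(\eta)-S_1(q)(\eta)}{1-r}=-\int_0^1 \partial_r S_{1+s(r-1)}(q)(\eta)\,ds,
\end{equation*}
and two applications of Minkowski's inequality then reduce the $L^p$-type norm of the local piece to an essential supremum of $\|\phi\,\partial_r S_r(q)\|$ over $r\in(1-\delta,1+\delta)$.

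The subtle point is that the statement claims the sharper weight $\alpha-1$ on $\partial_r S_r$, whereas a direct application of the argument above yields only the weight $\alpha$. This improvement should be extracted from the geometry of the Ewald sphere $\Gamma_r(\eta)$: its radius $r|\eta|/2$ depends linearly on $|\eta|$, so parametrizing the sphere and differentiating in $r$ produces, through the chain rule on $\widehat{q}$, an explicit factor of $|\eta|\sim\langle\eta\rangle$ in the principal term of $\partial_r S_r$. Isolating this factor in the spherical integral for $\partial_r S_r$ and absorbing it into the weight converts $\langle\eta\rangle^{\alpha}$ into $\langle\eta\rangle^{\alpha-1}|\eta|$, producing the claimed $L^p_{\alpha-1}$ bound. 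Carrying out this extraction cleanly, so that the identity $\widehat{P(q)}=S_1(q)+(\cdots)$ is respected and the constants only depend on $\gamma,\varepsilon,\delta$, is the main technical obstacle.
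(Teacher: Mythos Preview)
Your treatment of the bulk region $|1-r|\ge\delta$ matches the paper's argument for its piece $P_C$ essentially verbatim. The genuine gap is in the local piece, and your proposed fix does not close it.

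With a fixed (that is, $\eta$-independent) cutoff at $|1-r|<\delta$, the subtraction plus fundamental theorem of calculus plus Minkowski yields only
\[
\normm{\phi \pv\int_{1-\delta}^{1+\delta}\frac{S_r(q)}{1-r}\,dr}_{L^p_\alpha}\le 2\delta\,\essup_{r\in(1-\delta,1+\delta)}\norm{\phi\,\partial_r S_r(q)}_{L^p_\alpha},
\]
with weight $\alpha$, not $\alpha-1$. Your attempt to recover the missing power by writing $\partial_r S_r\sim|\eta|\,G_r$ is circular: the lemma treats $\partial_r S_r$ as a black box, and in any case the pointwise bound (see \eqref{eq:Kpoint}) shows $\partial_r S_r$ contains a term $K_r(\widehat q,\widehat q)$ \emph{without} the extra $|\eta|$, so no uniform factor of $|\eta|$ can be extracted. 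Even if it could, rewriting $\norm{\phi\,\partial_r S_r}_{L^p_\alpha}=\norm{\phi\,|\eta|G_r}_{L^p_\alpha}$ versus $\norm{\phi\,\partial_r S_r}_{L^p_{\alpha-1}}=\norm{\phi\,|\eta|G_r}_{L^p_{\alpha-1}}$ shows the former is strictly larger, not smaller.

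The mechanism the paper uses is different and is the missing idea: make the inner cutoff \emph{$\eta$-dependent}, setting $\delta_\eta=\delta\jp{\eta}^{-\tau}$ for some $\tau>1$, and split into three pieces $|1-r|\le\delta_\eta$, $\delta_\eta<|1-r|<\delta$, and $|1-r|\ge\delta$. On the innermost piece the constraint $|1-r|<\delta_\eta$ gives $\jp{\eta}\le C|1-r|^{-1/\tau}$, which trades one power of $\jp{\eta}$ for an integrable singularity $|1-r|^{-1/\tau}$ in $r$; this is exactly what produces the $L^p_{\alpha-1}$ bound on $\partial_r S_r$. The new intermediate annulus $\delta_\eta<|1-r|<\delta$ is then handled by a dyadic decomposition in $r$, and this is where the extra $\varepsilon$ in $\norm{\phi S_r(q)}_{L^p_{\alpha+\varepsilon}}$ is genuinely used (to make the dyadic sum $\sum 2^{-j\varepsilon/\tau}$ converge), not merely via the trivial embedding you invoke.
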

\begin{lemma} Let $q\in \mathcal S(\RR^n)$. Then,
\begin{enumerate}[label=(\alph*)]
\item If $r \in (0,\infty)$, $\beta>(n-2)/2$ and $ \alpha < \beta - (n-2)/2$, we have 
\begin{equation}\label{eq:estSr}
\norm{S_r(q)}_{L^1_{\alpha}(\RR^n)}\le C  \frac{1}{1+r}\norm{q}_{W^{\beta,2}} \norm{q}_{W^{(n-2)/2,2}}, 
\end{equation}
\item For every $\eta \neq 0$, $\partial_r S_r(q)(\eta)$ is smooth in the $r$ variable. Moreover, 
\begin{equation} \label{eq:Kpoint}
  \left | \partial_r  S_{r}(q)(\eta) \right | \le C  K_{r}(\widehat{q},\widehat{q})(\eta) + C |\eta|\sum_{i=1}^n K_{r}(\widehat{x_iq},\widehat{q}),
 \end{equation}
 if $r\in (1-\delta,1-\delta)$ for some $0<\delta<1$ fixed.
 \item Under the same conditions of (a), if we also have $r \in (1+\delta,1-\delta)$, 
 \begin{equation} \label{eq:partialSr}
  \norm{\partial_r  S_{r}(q)}_{L^1_{\alpha-1}} 
   \le C   \norm{q}_{W_1^{\beta,2}}^2.
\end{equation}
\end{enumerate}
\end{lemma}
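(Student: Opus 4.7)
Part (a) is essentially immediate from the pointwise estimate \eqref{eq:KandS}, which gives $|S_r(q)(\eta)|\le \tfrac{2}{1+r}K_r(\widehat{q},\widehat{q})(\eta)$. Applying \Cref{lemma:KHolder} with $f_1=f_2=q$ and using that $\norm{q}_{W^{(n-2)/2,2}}\le \norm{q}_{W^{\beta,2}}$ (\Cref{remark:Sob}), we obtain \eqref{eq:estSr} for $\alpha<\beta-(n-2)/2$.

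For (b), the plan is to parametrize the Ewald sphere by $\SP^{n-1}$ via $\xi=\eta/2+(r|\eta|/2)\omega$, so that $\eta-\xi=\eta/2-(r|\eta|/2)\omega$ and $\ds{r}(\xi)=(r|\eta|/2)^{n-1}\,d\omega$. Inserting this into \eqref{eq:Sr} yields
\[
S_r(q)(\eta)=\frac{r^{n-1}|\eta|^{n-2}}{2^{n-2}(1+r)}\int_{\SP^{n-1}}\widehat{q}\bigl(\tfrac{\eta}{2}+\tfrac{r|\eta|}{2}\omega\bigr)\widehat{q}\bigl(\tfrac{\eta}{2}-\tfrac{r|\eta|}{2}\omega\bigr)\,d\omega.
\]
Since $q\in\mathcal{S}(\RR^n)$ and $\eta\neq 0$, the integrand is smooth in $r$ and differentiation under the integral is justified, which gives the claimed regularity. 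Applying the product rule produces three types of terms: (i) one coming from $\partial_r[r^{n-1}/(1+r)]$, which for $r\in(1-\delta,1+\delta)$ is bounded in absolute value by a constant multiple of $r^{n-1}/(1+r)$; and (ii)--(iii) two from differentiating each factor $\widehat{q}$, contributing a factor $\pm|\eta|/2$ times $\omega\cdot\nabla\widehat{q}=-i\,\omega\cdot\widehat{xq}$. Undoing the parametrization, term (i) reassembles (up to a bounded factor) as $S_r$, hence is bounded pointwise by $C\,K_r(\widehat{q},\widehat{q})(\eta)$, while terms (ii)--(iii) reassemble (after estimating $|\omega_i|\le 1$) as $C|\eta|\sum_i K_r(\widehat{x_iq},\widehat{q})(\eta)$, giving \eqref{eq:Kpoint}.

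For (c), we integrate \eqref{eq:Kpoint} against $\jp{\eta}^{\alpha-1}$. Since $\jp{\eta}^{\alpha-1}\le\jp{\eta}^{\alpha}$ and $|\eta|\jp{\eta}^{\alpha-1}\le\jp{\eta}^{\alpha}$, each summand is controlled by an $L^1_\alpha$-norm of $K_r$, so \Cref{lemma:KHolder} (in the range $\alpha<\beta-(n-2)/2$) yields
\[
\norm{K_r(\widehat{q},\widehat{q})}_{L^1_{\alpha-1}}\le C\norm{q}_{W^{\beta,2}}^2,
\]
and
\[
\norm{|\eta|\,K_r(\widehat{x_iq},\widehat{q})}_{L^1_{\alpha-1}}\le C\bigl(\norm{x_iq}_{W^{\beta,2}}\norm{q}_{W^{(n-2)/2,2}}+\norm{q}_{W^{\beta,2}}\norm{x_iq}_{W^{(n-2)/2,2}}\bigr).
\]
Finally, \Cref{remark:Sob} provides $\norm{x_iq}_{W^{s,2}}\le C\norm{q}_{W^{s,2}_1}$ and $\norm{q}_{W^{s,2}}\le \norm{q}_{W^{\beta,2}_1}$ for $s\le\beta$, which together give \eqref{eq:partialSr}.

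The main technical step is the bookkeeping in (b): one has to verify that after differentiation the geometric factors (the prefactor $r^{n-1}|\eta|^{n-2}/(1+r)$ and the surface-measure Jacobian) combine in such a way that the three derivative terms can be repackaged exactly as $K_r(\widehat{q},\widehat{q})$ and $|\eta|K_r(\widehat{x_iq},\widehat{q})$, rather than as more delicate quantities involving the normal to $\Gamma_r(\eta)$. Once \eqref{eq:Kpoint} is in hand, part (c) is a direct application of \Cref{lemma:KHolder} and the weighted Sobolev inclusions of \Cref{remark:Sob}.
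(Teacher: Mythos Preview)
Your proof is correct and follows essentially the same approach as the paper: parts (a) and (c) are derived exactly as the paper does, from \eqref{eq:KandS} and \Cref{lemma:KHolder} together with \Cref{remark:Sob}. For (b) the paper simply cites \cite[Lemma 4.4]{back}, while you supply the underlying computation (parametrizing $\Gamma_r(\eta)$ over $\SP^{n-1}$ and differentiating under the integral), which is precisely the argument given in that reference.
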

\begin{proof}
First of all, $(a)$ follows directly from \eqref{eq:KandS} and \Cref{lemma:KHolder}. Actually \eqref{eq:estSr} holds for $q\in W^{\beta,2}(\RR^n)$ and not only for the Schwartz class. $(b)$ is the statement of \cite[Lemma 4.4]{back}, and is proved in the mentioned paper by direct computation. Finally, $(c)$ follows from taking the $L^1_{\alpha-1}$ norm of \eqref{eq:Kpoint}. This yields
\begin{equation*}
  \norm{\partial_r S_{r}(q)}_{L^1_{\alpha-1}} 
   \le C  \norm{ {K}_{r}(\widehat{q},\widehat{q}) }_{L_{\alpha}^1}+  C \sum_{i=1}^{n} \norm{ K_{r}(\widehat{x_iq},\widehat q)}_{ L^1_\alpha},
\end{equation*}
for every $r\in(1-\delta,1+\delta)$. Then we can apply \Cref{lemma:KHolder} directly to the first term with $f_1=q=f_2$, and to the second, with $f_1 =q$ and $f_2(x)=x_iq(x)$,
\begin{equation*}
  \norm{\partial_r S_{r}(q)}_{L^1_{\alpha-1}} 
   \le C  \norm{q}_{W^{\beta,2}}^2 +  C \norm{x_i q}_{W^{\beta,2}}\norm{q}_{W^{(n-2)/2,2}} \le C\norm{q}_{W_1^{\beta,2}}^2,
\end{equation*}
where to get the last line we have used \Cref{remark:Sob}.
\end{proof}

\begin{proof}[Proof of $\Cref{prop:PVHolder}$]
Let $\alpha<\beta -(n-2)/2$. Then we can choose an $\varepsilon = \varepsilon(\beta,\alpha)>0$ such that $\alpha +\varepsilon<\beta -(n-2)/2$. Hence by point $(a)$ of the previous lemma we have  
\begin{equation*}
\norm{S_r(q)}_{L^1_{\alpha +\varepsilon}(\RR^n)}\le C  \frac{1}{1+r}\norm{q}_{W^{\beta,2}} \norm{q}_{W^{(n-2)/2,2}}.
\end{equation*}
This, together with point $(c)$ of the same lemma and \Cref{lemma:PV} with $p=1$,  $\gamma \le 1$ and $\phi =1$ yields estimate \eqref{eq:PVHolder} for $q\in \mathcal S(\RR^n)$. The extension for $q \in W^{\beta,2}(\RR^n)$ follows by standard density arguments ($P(q)$ is bilinear, so it is a slightly different case case from a linear operator, see for example \cite[Lemma 5.3]{back}).
\end{proof}

\begin{proof}[Proof of  $\Cref{lemma:PV}$]
Fix some $\tau  >1$ and $0<\delta<1$, and set
\begin{equation} \label{eq:etadelta} 
\delta_\eta :=  \frac{\delta}{\jp{\eta}^{\tau}}.
\end{equation}
Since $q \in \mathcal S(\RR^n)$, it can be seen that for every $\eta \neq 0 $ fixed, $S_r(q)(\eta)$ is smooth in the $r$ variable. Using that $P.V.\int_{|1-r|<a} \frac{dr}{1-r} = 0$ for any $0<a<\infty$,  we have 
\begin{align}
\nonumber P(q)(\eta) &= \\ 
\nonumber =& \int_{|1-r| \le   \delta_\eta} \frac {S_r(q)-S_1(q)}{1-r}(\eta) \,dr +\int_{\delta_\eta <|1-r|<\delta} \frac{S_r(q)(\eta)}{1-r}dr + \int_{\delta \le |1-r|} \frac{S_r(q)(\eta)}{1-r}\,dr \\
  \label{eq.noPV}  :=& \, P_{A}(q)(\eta) + P_{B}(q)(\eta) + P_{C}(q)(\eta).
\end{align}
By Mikowski's inequality we obtain that
\begin{align} 
\nonumber \|\phi P_{C} (q)\|_{L^p_\alpha} &\leq C \int_{\left\{  \delta<|1-r| \right\}} \frac{\|\phi S_r(q)\|_{L^p_\alpha} }{|1-r|} \, dr  \\
\nonumber &\le  \essup_{r \in (0,\infty)}\left[ (1+r)^{\gamma} \|\phi S_r(q)\|_{L^p_\alpha} \right] \int_{\delta<|1-r|} \frac{1}{(1-r)(1+r)^{\gamma}} \, dr \\ 
\label{eq:PV1} &\le C(\gamma,\delta)   \essup_{r \in (0,\infty)} \left[ (1+r)^{\gamma} \|\phi S_r(q)\|_{L^p_\alpha} \right].
\end{align}
On the other hand,  by the fundamental theorem of calculus we have
\[
 P_{A}(q)(\eta) :=  \int_{|1-r| <  \delta_\eta} \frac {S_r(q)-S_1(q)}{1-r}(\eta) \,dr =  \int_{|1-r| <  \delta_\eta} \int_{0}^1  \partial_s S_{s}(q)(\eta)\big |_{s=s(t)} \, dt \,dr,
\]
for $s(t) = (r-1)t +1$. And then, since $|1-r| <  \delta_\eta$ implies the inequality 
\[ \jp{\eta} \, \le \delta^{1/\tau}|1-r|^{-1/\tau},\]
by Minkowski's inequality we obtain that
\begin{align}
 \nonumber &\norm{ \phi P_{A}(q)}_{ L^p_{\alpha}} \\
 \nonumber  &\le  \left( \int_{\RR^n} \jp{\eta}^{p(\alpha-1)}
\left( \int_{|1-r| <  \delta} \frac{\delta^{1/\tau }}{|1-r|^{1/\tau}}\int_{0}^1 \left| \phi(\eta)\partial_s S_{s}(q)(\eta) \big |_{s=s(t)} \right|   dt \, dr \right)^{p} d\eta \right)^{1/p} \\
\nonumber  &\le  \delta^{1/\tau} \int_{|1-r| <  \delta} \int_0^1 \frac{1}{|1-r|^{1/\tau}}
 \, \big \lVert \phi \partial_s S_{s}(q) {\big|_{s=s(t)}} \big \rVert_{L^p_{\alpha-1}}    dr \, dt  \\
  \label{eq:PV2} &\le  \delta  \frac{2\tau}{\tau-1} \essup_{t \in (0,1)} \big \lVert \phi \partial_s S_{s}(q) {\big|_{s=s(t)}} \big \rVert_{L^p_{\alpha-1}}  =  \delta  \frac{2\tau}{\tau-1} \;  \essup_{r \in (1-\delta,1+\delta)} \norm{ \phi \partial_r  S_{r}(q) }_{L^p_{\alpha-1}}  .
\end{align}
To estimate  the remaining term $P_B(q)$ we need  a dyadic decomposition in the $r$ variable. We set $N(\eta) = - \log_2(\delta \jp{\eta}^{-s})$. Then,
\begin{align*}
P_{B}(q)(\eta) :&=  \int_{\delta_\eta < |1-r| <\delta } \frac{S_r(q)(\eta)}{1-r}  \,dr \\
&= \sum_{0 \le j<N(\eta)}  \int_{\{2^{-(j+1)}<|1-r|<2^{-j}\}} \chi_{\delta_\eta < |1-r| <\delta}(r)  \frac{S_r(q)(\eta)}{1-r}  \,dr .
\end{align*}
 If $j=0,1,...,N(\eta)$, for $\eta$ fixed, the definition of $N(\eta)$ implies that ${2^j \le  \jp{\eta}^{s}}/\delta$, therefore
 \begin{equation}  \label{eq.diadic1}
|P_{B}(q)(\eta)|\le \sum_{j=0}^\infty 2^{j+1} \chi_{(\delta 2^{j},\infty)}(\jp{\eta}^{s})\int_{|1-r|<2^{-j}} |{S_r(q)}(\eta)| \,dr .
\end{equation}
 But observe that in the last line we have an operator of the kind
$${P^{\lambda}(q)}(\eta) := \chi_{(\delta{\lambda^{-1}},\infty)} (\jp{\eta}^{s}) \int_{|1-r|\le \lambda} |{S_r(q)}(\eta)| \, dr,$$
 with $0<\lambda\le 1$. If we take any $\varepsilon>0$, computing its $L^p_{\alpha}$ and applying Minkowski's integral inequality we obtain
\begin{equation}  \label{eq.diadic2}
\norm{ \phi P^{\lambda}(q)}_{ L^p_{\alpha}}  \le {\lambda}^{\varepsilon/s}  \int_{\{|1-r|\le {\lambda} \}} \norm{ \phi S_r(q)}_{ L^p_{\alpha+\varepsilon}} \,dr \le {\lambda}^{1+\varepsilon/s}  \essup_{r\in(0,\infty)} \norm{ \phi S_r(q)}_{ L^p_{\alpha+\varepsilon}},
\end{equation}
where we have used that in the region where the characteristic function does not vanish  $\jp{\eta}^{-\varepsilon} \le \delta^{-\varepsilon/s} \lambda^{\varepsilon/s}$. Hence, taking the $L^p_{\alpha}$ norm of (\ref{eq.diadic1}) and applying estimate (\ref{eq.diadic2}) yields
\begin{multline} \label{eq.PV.3}
 \norm{\phi P_{B}(q)}_{ L^p_{\alpha}} \le 2 \sum^{\infty}_{j=0}  2^{j} \norm{ { \phi P^{2^{-j}}(q)} }_{ L^p_{\alpha}} \\
\le    2\delta^{-\varepsilon/s}  \essup_{r\in(0,\infty)} \norm{ \phi S_r(q)}_{ L^p_{\alpha+\varepsilon}} \sum^{\infty}_{j=0}  2^{-j \varepsilon/s} 
\le    C(\varepsilon,\delta) \essup_{r\in(0,\infty)} \norm{ \phi S_r(q)}_{ L^p_{\alpha+\varepsilon}}.
\end{multline} 
This is enough to conclude the proof, putting together \eqref{eq.noPV}, \eqref{eq:PV1}, \eqref{eq:PV2} and \eqref{eq.PV.3}.
\end{proof}


\section{Santaló's formula and the spherical term} \label{sec:santa}

In this section we give a proof of the estimate of the spherical term $S_r$ for the special case $r=1$. The main tool is Santaló's formula in spheres, which enables us to adapt the arguments of \cite{BFRV13} for dimension $n\ge 3$.  In this section we denote by $\sigma$  the restriction of Lebesgue measure to $\SP^{n-1}$ and $\SP^{n-1}$, independently of the dimension. 
\begin{proposition}[Santaló's formula] \label{prop.santalo}
Let $f$ be a $L^1(\SP^{n-1})$ function and $\theta \in \SP^{n-1}$. Then if we define 
\begin{equation} \label{eq:defnStheta}
 \SP^{n-2}_\theta  = \{ \omega \in \SP^{n-1}: \theta \cdot \omega =0 \},
\end{equation}
we have that
\begin{equation} \label{eq.santalo}
\int_{\SP^{n-1}} \int_{\SP^{n-2}_\theta} f(\omega) \, d\sigma(\omega)\, d\sigma  (\theta) = |\SP^{n-2}| \int_{\SP^{n-1}} f(\theta) \, d\sigma  (\theta).
\end{equation}
\end{proposition}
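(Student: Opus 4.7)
The plan is to view the LHS of \eqref{eq.santalo} as a positive rotation-invariant linear functional of $f$ and exploit the uniqueness (up to a multiplicative constant) of the $O(n)$-invariant measure on $\SP^{n-1}$. Concretely, I would set
\[
T(f) := \int_{\SP^{n-1}} \int_{\SP^{n-2}_\theta} f(\omega)\, d\sigma(\omega)\, d\sigma(\theta),
\]
for $f \in C(\SP^{n-1})$, and observe that $T$ is positive and linear.

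The key equivariance is that $R\, \SP^{n-2}_\theta = \SP^{n-2}_{R\theta}$ for every $R \in O(n)$, which is immediate from $\theta \cdot \omega = R\theta \cdot R\omega$ and the definition \eqref{eq:defnStheta}. Combining this with the $O(n)$-invariance of $d\sigma$ on both $\SP^{n-1}$ and (via the equivariance) on the subspheres $\SP^{n-2}_\theta$, the double change of variables $\omega \mapsto R\omega$ in the inner integral and $\theta \mapsto R\theta$ in the outer integral yields $T(f \circ R^{-1}) = T(f)$.

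By the Riesz representation theorem, $T(f) = \int_{\SP^{n-1}} f\, d\nu$ for a positive Radon measure $\nu$ on $\SP^{n-1}$; rotation-invariance of $T$ then transfers to $\nu$, and by uniqueness of the rotation-invariant probability measure on the sphere one has $\nu = c\, \sigma$ for some constant $c>0$. To fix $c$, I would test with $f \equiv 1$: the LHS reduces to $\int_{\SP^{n-1}}|\SP^{n-2}_\theta|\, d\sigma(\theta) = |\SP^{n-1}|\cdot|\SP^{n-2}|$, since each great subsphere $\SP^{n-2}_\theta$ is isometric to the unit $(n{-}2)$-sphere, while the RHS is $c\,|\SP^{n-1}|$. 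Hence $c = |\SP^{n-2}|$, and the identity extends from $C(\SP^{n-1})$ to $L^1(\SP^{n-1})$ by density. I do not foresee any serious obstacle: the only substantive input is the equivariance $R\, \SP^{n-2}_\theta = \SP^{n-2}_{R\theta}$, and everything else is bookkeeping plus the classical uniqueness of Haar measure on $\SP^{n-1} = O(n)/O(n-1)$.
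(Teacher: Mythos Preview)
Your proposal is correct and follows essentially the same route as the paper: define the functional on $C(\SP^{n-1})$, represent it by a Radon measure via Riesz, verify rotation invariance through the equivariance $R\,\SP^{n-2}_\theta = \SP^{n-2}_{R\theta}$, invoke uniqueness of the rotation-invariant measure on $\SP^{n-1}$, and fix the constant by testing $f\equiv 1$. The only cosmetic difference is that the paper phrases the uniqueness step as ``uniformly distributed Radon measures are unique up to scalar'' (citing \cite[Proposition 3.1.5]{KP}) rather than Haar-measure uniqueness on $O(n)/O(n-1)$, and it does not spell out the density extension to $L^1$ that you include.
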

\begin{proof} 
We define the following positive and bounded functional on  $C(\SP^{n-1})$,
\[F(g) := \int_{\SP^{n-1}} \int_{\SP^{n-2}_\theta} g(\omega) \, d\sigma (\omega)\, d\sigma  (\theta).\]
This means that by the Riesz representation theorem, there exists a Radon  measure $\mu$ on $\SP^{n-1}$ such that
\[F(g) =  \int_{\SP^{n-1}} g(\theta) \, d\mu (\theta). \]
But observe that if $O$ is any orthogonal matrix we have that
\[\int_{\SP^{n-2}_\theta} g(O(\omega)) \, d\sigma (\omega) = \int_{\SP^{n-2}_{O(\theta)}} g(\omega) \, d\sigma (\omega),\]
which in turn implies, integrating  in $\SP^{n-1}$ both sides of the previous equation, that $F$ is invariant under rotations. Therefore, the following property must hold in the measure representation of $F$
\begin{equation} \label{eq.santalo.1}
\int_{\SP^{n-1}} g(O(\theta)) \, d\mu (\theta)=\int_{\SP^{n-1}} g(\theta) \, d\mu (\theta).
\end{equation}
One consequence of this fact is that all balls of the same radius in the sphere must have the same $\mu$-measure, that is, $\mu$ is a uniformly distributed measure on $\SP^{n-1}$. 
This is a very rigid property for Radon measures. In fact, all uniformly distributed Radon measures must be equal up to a scalar factor (see \cite[Proposition 3.1.5]{KP}) which  implies that $\mu$ must be  a multiple of the Lebesgue measure on $\SP^{n-1}$. To determine the constant it is enough to compute $F(1)$.
\end{proof}

Since $r=1$ always in  this section, to simplify notation we will drop the subindex $1$, that is, we write $S(q):=S_1(q)$, $\Gamma(\eta):=\Gamma_1(\eta)$, $N(\xi):=N_1(\xi)$ and analogously for similar cases.

\begin{proposition} \label{prop:S1Holder}
 Let $n\ge 3$ and assume that $q\in W^{\beta,2}(\RR^n)$ with $\beta > (n-2)/2$.   Then we have that
\[ \norm{S(q)}_{L^1_\alpha} \le  C  \norm{q}_{W^{\beta,2}}\norm{q}_{W^{(n-2)/2,2}} ,\]
for all $ \alpha < \beta - (n-2)/2$.
\end{proposition}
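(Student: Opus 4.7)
The plan is to parametrize the Ewald sphere $\Gamma(\eta)=\Gamma_1(\eta)$ using the orthogonality that is special to $r=1$: on $\Gamma(\eta)$ one has $|\xi|^2=\xi\cdot\eta$, so $\xi\perp(\eta-\xi)$. Writing $\xi=s\theta_1$ with $\theta_1\in\SP^{n-1}$, $\theta_1\cdot\hat\eta>0$ and $s=\eta\cdot\theta_1$, the antipodal point becomes $\eta-\xi=t\theta_2$ with $\theta_2\perp\theta_1$ and $s^2+t^2=|\eta|^2$. A direct computation (e.g.\ through $\xi=(|\eta|/2)(\omega+\hat\eta)$, $\omega\in\SP^{n-1}$) gives $d\sigma_\eta(\xi)=|\eta|\,s^{n-2}\,d\sigma(\theta_1)$, so after the $|\eta|^{-1}$ factor in $S(q)$ cancels I obtain the pointwise bound
\[
|S(q)(\eta)|\le \int_{\theta_1\cdot\hat\eta>0}|\hat q(s\theta_1)|\,|\hat q(t\theta_2)|\,s^{n-2}\,d\sigma(\theta_1).
\]

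Next I would integrate against $\jp{\eta}^\alpha$ and apply Fubini to bring the $\eta$-integration inside. For each fixed $\theta_1$, decompose $\eta=s\theta_1+v$ with $v\in\theta_1^\perp$ and $d\eta=ds\,dv$, then pass to polar coordinates $v=t\theta_2$ with $\theta_2\in\SP^{n-2}_{\theta_1}$ and $dv=t^{n-2}dt\,d\sigma(\theta_2)$. Since $|\eta|^2=s^2+t^2$, this yields
\[
\norm{S(q)}_{L^1_\alpha}\le \int_0^\infty\!\int_0^\infty s^{n-2}t^{n-2}\jp{(s,t)}^\alpha\, T(s,t)\,dt\,ds,
\]
with $T(s,t):=\int_{\SP^{n-1}}|\hat q(s\theta_1)|\int_{\SP^{n-2}_{\theta_1}}|\hat q(t\theta_2)|\,d\sigma(\theta_2)\,d\sigma(\theta_1)$. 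To decouple the pair $(\theta_1,\theta_2)$ in $T(s,t)$, I would apply Cauchy--Schwarz first in $\theta_2$ and then in $\theta_1$, invoking Santaló's formula (\Cref{prop.santalo}) in the form
\[
\int_{\SP^{n-1}}\!\int_{\SP^{n-2}_{\theta_1}}|\hat q(t\theta_2)|^2\,d\sigma(\theta_2)\,d\sigma(\theta_1)=|\SP^{n-2}|\int_{\SP^{n-1}}|\hat q(t\theta)|^2\,d\sigma(\theta),
\]
to conclude $T(s,t)\le C_n\,H_q(s)\,H_q(t)$, where $H_q(r):=(\int_{\SP^{n-1}}|\hat q(r\theta)|^2\,d\sigma)^{1/2}$.

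It then remains to bound the radial integral $\int\!\int s^{n-2}t^{n-2}\jp{(s,t)}^\alpha H_q(s)H_q(t)\,dt\,ds$. Using the $s\leftrightarrow t$ symmetry I restrict to $s\ge t$, gaining $\jp{(s,t)}\le C\jp{s}$, and introduce the $L^2$ functions $u(s):=s^{(n-1)/2}\jp{s}^\beta H_q(s)$ and $v(t):=t^{(n-1)/2}\jp{t}^{(n-2)/2}H_q(t)$, which by polar coordinates satisfy $\norm{u}_{L^2(0,\infty)}=\norm{q}_{W^{\beta,2}}$ and $\norm{v}_{L^2(0,\infty)}=\norm{q}_{W^{(n-2)/2,2}}$. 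Applying Cauchy--Schwarz first in the inner variable $t$ (paired with the weight of $v$, producing the factor $J(s):=(\int_0^s t^{n-3}\jp{t}^{-(n-2)}dt)^{1/2}$, which grows at most like $(1+\log^+ s)^{1/2}$) and then in $s$ (paired with the weight of $u$), I reduce the matter to the convergence of $\int_0^\infty s^{n-3}\jp{s}^{2(\alpha-\beta)}J(s)^2\,ds$. This is finite at infinity precisely when $\alpha<\beta-(n-2)/2$ (the logarithm being absorbed) and at the origin since $s^{n-3}\cdot s^{n-2}=s^{2n-5}$ is locally integrable for $n\ge 3$.

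The main obstacle is the decoupling step for $T(s,t)$: without Santaló's formula one would be left with a constrained integral over pairs of orthogonal unit vectors and would have to fall back on the kernel-based approach of \Cref{lemma:KHolder}, whose key input \Cref{lemma:integral2} operates on the spheres $N_r(\xi)$ and degenerates at $r=1$, where $N_1(\xi)$ is only a hyperplane. Santaló's formula supplies exactly the geometric input needed to bypass this degeneration and obtain the sharp coupling $H_q(s)H_q(t)$, which is what allows one of the two factors to be measured in the weaker norm $W^{(n-2)/2,2}$.
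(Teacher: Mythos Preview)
Your proof is correct and follows essentially the same strategy as the paper: exploit the orthogonality $\xi\perp(\eta-\xi)$ on $\Gamma_1(\eta)$ to pass to a symmetric $(s,t)$ radial form, apply Santal\'o's formula to decouple the angular variables into the product $H_q(s)H_q(t)$, and finish with Cauchy--Schwarz in the radial variables under the constraint $\alpha<\beta-(n-2)/2$. The paper reaches the same structure via \Cref{lemma:fubini} and the disc $D(\xi)$ rather than your direct $\theta_1$-parametrization (with the Jacobian $d\sigma_\eta=|\eta|\,s^{n-2}\,d\sigma(\theta_1)$, which is indeed correct), but the key ingredients are identical; the only inaccuracy is in your closing commentary, since \Cref{lemma:integral2} is radius-independent and the paper explicitly notes that the kernel approach of \Cref{lemma:KHolder} also goes through at $r=1$, so Santal\'o is an elegant alternative rather than a necessity.
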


\begin{proof}
 As in the proof of \Cref{lemma:KHolder}, by the symmetry in $\xi$ and $\eta-\xi$, we have that 
\begin{align*}
\norm{{S(q)}}_{L^1_\alpha}\le C \int_{\RR^n} \frac{1}{|\eta|} \int_{\Gamma^+(\eta)} |\widehat{q}(\xi)|<\xi>^\alpha |\widehat{q}(\eta-\xi)| \,d\sigma_\eta(\xi) \, d\eta,
\end{align*}
where we have used  that in this region $|\xi| \le |\eta| \le 2|\xi|$. Let's change the order of integration using \Cref{lemma:fubini},
\[\norm{{S(q)}}_{L^1_\alpha} \le C \int_{\RR^n} |\widehat{q}(\xi)| \frac{<\xi>^\alpha}{|\xi|} \int_{N^+(\xi)} |\widehat{q}(\eta-\xi)|   d\sigma_\xi(\eta) d\xi.\]
Now, if we change variables in the second integral by fixing $v = \eta-\xi$ we have
\begin{equation} \label{eq:long1}
\norm{{S(q)}}_{L^1_\alpha} \le C \int_{\RR^n} |\widehat{q}(\xi)| \frac{<\xi>^\alpha}{|\xi|}  \int_{D(\xi)} |\widehat{q}(v)|  d\sigma_\xi(v) d\xi,
\end{equation}
where, since $|\xi- \eta/2|= |\eta|/2 \iff \xi \cdot (\eta-\xi) $, $D(\xi)$ is the disc given by $D(\xi) = \{v \in \RR^n : v \cdot \xi =0, \,|v| \le |\xi| \}$.

If we  write the first integral in \eqref{eq:long1} in spherical coordinates taking  $\xi = r\theta$, by Cauchy-Schwarz inequality we obtain
\begin{align} 
\nonumber \norm{{S(q)}}_{L^1_\alpha} &\le C \int_0^\infty r^{n-2}(1+r^2)^{\alpha/2} \int_{\SP^{n-1}} |\widehat{q}(r\theta)| \int_{D(r\theta)} |\widehat{q}(v)| \, \,  d\sigma_{(r\theta)}(v)\, d\sigma (\theta) dr \\
\label{eq:long2} &\le C \int_0^\infty r^{n-2}(1+r^2)^{\alpha/2} \left ( \int_{\SP^{n-1}} |\widehat{q}(r\theta)|^2 d\sigma (\theta) \right)^\frac{1}{2} F(r) \, dr,
\end{align}
where, using the definition of $\SP^{n-2}_\theta$ given in \eqref{eq:defnStheta}, we have
\begin{align*}
F(r) &= \left( \int_{\SP^{n-1}} \left( \int_{D(r\theta)} |\widehat{q}(v)| \, \,  d\sigma_{(r\theta)}(v)\, \right)^2 d\sigma (\theta)  \right)^{\frac{1}{2}} \\
&= \left( \int_{\SP^{n-1}} \left( \int_{\SP_\theta^{n-2}} \int_0^r s^{n-2} \, |\widehat{q}(s\omega)| \, ds \, d\sigma (\omega)\right)^2  d\sigma (\theta) \right)^{\frac{1}{2}} ,
\end{align*}
Then Hölder inequality and Minkoswski's integral inequality yield
\begin{align*}
F(r)  &\le  C \left( \int_{\SP^{n-1}}\int_{\SP_\theta^{n-2}} \left(  \int_0^r s^{n-2} \, |\widehat{q}(s\omega)| \, ds  \right)^2  d\sigma (\omega) \, d\sigma (\theta) \right)^{\frac{1}{2}} \\
&\le  C  \int_0^r s^{n-2}\left( \int_{\SP^{n-1}}\int_{\SP_\theta^{n-2}}  |\widehat{q}(s\omega)|^2 \, d\sigma (\omega) \, d\sigma (\theta) \right)^{\frac{1}{2}}  ds.
\end{align*}
Now, using Santaló's formula (\ref{eq.santalo}) we have that
\[\int_{\SP^{n-1}}\int_{\SP_\theta^{n-2}}  |\widehat{q}(s\omega)|^2 \, d\sigma (\omega) \, d\sigma (\theta) = |\SP^{n-2}| \int_{\SP^{n-1}}  |\widehat{q}(s\theta)|^2 \, d\sigma (\theta) ,\]
and hence, multiplying and dividing by $\jp{s}^{1/2+\varepsilon}$ and using Cauchy-Schwartz inequality we get
\begin{align*}
F(r)  &\le  C  \left( \int_0^r s^{2(n-2)} \jp{s}^{1+2\varepsilon}  \int_{\SP^{n-1}}  |\widehat{q}(s\theta)|^2 \, d\sigma (\theta) \,ds \right)^{\frac{1}{2}} \times \dots \\
& \hspace{50mm}\dots \times \left(\int_0^\infty \frac{1}{\jp{s'}^{1+2\varepsilon}} \, ds' \right)^\frac{1}{2} \\
&\le C \jp{r}^{\varepsilon} \left( \int_0^r s^{n-1}  \int_{\SP^{n-1}}  |\widehat{q}(s\theta)|^2\jp{s}^{n-2}  \, d\sigma (\theta) ds \right)^{\frac{1}{2}} \\
 &\le C \jp{r}^{\varepsilon}\norm{q}_{W^{(n-2)/2,2}},
\end{align*}
(to get the second line is where we have used implicitly the condition $n \ge 3$, so that the exponent of $s^{n-3}$ is non-negative).
Using the estimate for $F(r)$ in (\ref{eq:long2}), and repeating again exactly the same reasoning to bound the resulting integral, we finally obtain
\begin{align*} 
&\norm{{S(q)}}_{L^1_\alpha} \le C \norm{q}_{W^{(n-2)/2,2}} \int_0^\infty r^{n-2}  \left( \int_{\SP^{n-1}}  |\widehat{q}(r\theta)|^2 \jp{r}^{2\alpha  + 2\varepsilon} \, d\sigma (\theta) \right)^{\frac{1}{2}} dr \\
&\le C \norm{q}_{W^{(n-2)/2,2}}  \left( \int_0^\infty r^{2(n-2)} \jp{r}^{1+2\varepsilon} \int_{\SP^{n-1}}  |\widehat{q}(r\theta)|^2 \jp{r}^{2\alpha  + 2\varepsilon} \, d\sigma (\theta)  \, dr \right)^{\frac{1}{2}} \\
&\le C \norm{q}_{W^{\alpha + (n-2)/2+2\varepsilon,2}} \norm{q}_{W^{(n-2)/2,2}},
\end{align*}
so choosing $\beta = \alpha + (n-2)/2+2\varepsilon$ we obtain the desired result.
\end{proof}

\begin{proof}[Proof of $\Cref{prop:Q2:L1}$]
It follows directly by  \eqref{eq:Q2strct},  using \Cref{prop:PVHolder} and  \Cref{prop:S1Holder}.
\end{proof}


\section{Proof of Theorem \ref{teo:main2}}

In this section we assume that $q$ is a radial function. Since in \Cref{teo:main2} we estimate $\widetilde Q_2(q)$, to simplify notation we define $\widetilde S_r(q) : = \chi S_r(q)$ and $\widetilde K_r(\widehat{f_1},\widehat{f_2}) := \chi K_r(\widehat{f_1},\widehat{f_2})$ as in \eqref{eq:cutoff}. We begin by giving estimates for the spherical operator $\widetilde S_r(q)$ and its radial derivative in $W^{\alpha,2}(\RR^n)$.
As in the case of section \ref{sec:SrHolder}, we estimate first  $\widetilde K_r(\widehat{f_1},\widehat{f_2})$. 

\begin{lemma} \label{lemma:Kradial}
 Let $n\ge 2$ and $f_1,f_2 \in {W}^{2,\beta}(\RR^n)$, and assume that $|\widehat{f_2}(\xi)|$ is a radial function. Then, if $r \in (0,\infty)$, $r\neq 1$ and $\beta_0 =\min(-1/2,(n-7)/4) $ we have that
\begin{equation} \label{eq:Kradial}
\norm{ \widetilde K_r(\widehat{f_1},\widehat{f_2})}_{L^2_{\alpha}(\RR^n)} \le C  \,(1+r)^{1-\gamma} \norm{{f_1}}_{W^{\beta,2}}\norm{{f_2}}_{W^{\beta,2}}, 
\end{equation}
for some $\gamma>0$, possibly depending on $\beta$, if the following condition holds
\begin{equation} \label{eq:primrange}
\begin{cases}
\alpha \le 2\beta - (n-3)/2 \quad if \quad  \beta_0< \beta <  (n-2)/2, \\
\alpha < \beta +1 \hspace{18mm} if \quad (n-2)/2\le  \beta<\infty,
\end{cases}
\end{equation}
\end{lemma}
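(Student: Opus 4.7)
The plan is to adapt the strategy of \Cref{lemma:KHolder} to the $L^2_\alpha$ setting, exploiting the radial symmetry of $|\widehat{f_2}|$ to obtain the wider range \eqref{eq:primrange}. Since $\widetilde{K}_r$ is cut off to $|\eta|>C_0$, we may freely replace the factor $1/|\eta|$ by $\jp{\eta}^{-1}$ up to constants.

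First I would parametrize the Ewald sphere by $\omega \in \SP^{n-1}$ via $\xi = \eta/2 + (r|\eta|/2)\omega$, which gives the explicit formula
\[ |\eta-\xi|^2 = \frac{|\eta|^2}{4}(1 - 2r\,\hat\eta\cdot\omega + r^2). \]
The radial assumption on $|\widehat{f_2}|$ then implies that $|\widehat{f_2}(\eta-\xi)|$ depends on $\omega$ only through $t := \hat\eta\cdot\omega$. Writing $\omega = t\hat\eta + \sqrt{1-t^2}\omega'$ with $\omega' \in \SP^{n-2}_{\hat\eta}$ decouples the integrand over $\Gamma_r(\eta)$ into an outer integral in $t$ weighted by a now one-dimensional factor involving $|\widehat{f_2}|$, and an inner codimension-two average of $|\widehat{f_1}|$ over the equatorial slice at latitude $t$.

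Next I would bound $\norm{\widetilde{K}_r}_{L^2_\alpha}$ by duality and apply the Fubini-type change of order in \Cref{lemma:fubini} to convert the pairing against a test function $g \in L^2_{-\alpha}$ into an integral over the dual spheres $N_r(\xi)$. Cauchy-Schwarz in $\xi$ isolates $\widehat{f_1}$ at the cost of a weight $\jp{\xi}^{-2\beta}$, reducing the problem to the estimate of
\[ \int \jp{\xi}^{-2\beta} |\xi|^{-2} \Bigl(\int_{N_r(\xi)} \chi(\eta)|g(\eta)||\widehat{f_2}(\eta-\xi)|\, d\sigma_{r,\xi}(\eta)\Bigr)^{2} d\xi. \]
On $N_r(\xi)$ the quantity $|\eta-\xi|$ depends only on the latitude $\hat\xi\cdot\hat\nu$, where $\nu$ parametrizes $N_r(\xi)$, so a further Cauchy-Schwarz along this latitudinal slicing separates $|\widehat{f_2}|$ from $g$. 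The $|\widehat{f_2}|$ factor reduces, via the Jacobian from the latitude to $|\eta-\xi|$, to a one-dimensional radial norm controlled by $\norm{f_2}_{W^{\beta,2}}$; the $g$ factor becomes, after a final swap of integration order, an $L^2_{-\alpha}$ norm of $g$ weighted by a geometric kernel bounded via \Cref{lemma:integral2}.

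The two regimes in \eqref{eq:primrange} arise from the Jacobian weight in the latitude-to-radius substitution: in the range $\beta < (n-2)/2$ one loses half a derivative, yielding $\alpha \le 2\beta-(n-3)/2$, while for $\beta \ge (n-2)/2$ the Sobolev weights absorb this loss and one recovers the full gain $\alpha<\beta+1$. The prefactor $(1+r)^{1-\gamma}$ is extracted by tracking the competing $r$-dependencies: the Ewald sphere Jacobian $(r|\eta|/2)^{n-1}$ and the radius $2|\xi|r/|1-r^2|$ of $N_r(\xi)$, balanced against the concentration of $|\widehat{f_2}|^2$ on small portions of $N_r(\xi)$ as $r$ grows. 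The main obstacle I anticipate is obtaining $\gamma>0$ uniformly for $r$ bounded away from $1$; this gain is essential for the subsequent principal value estimate, and near the threshold $\beta = \beta_0$ the hypotheses of \Cref{lemma:integral2} with parameters approaching the endpoints $a<n-1$ and $a+b>n-1$ must be sharply verified, which is the technically delicate point of the argument.
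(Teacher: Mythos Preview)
Your approach differs substantially from the paper's. The paper never uses duality: it squares $\widetilde K_r$ directly, splits $\Gamma_r(\eta)$ into $\Gamma_r^{\pm}(\eta)$, and applies Cauchy--Schwarz on each half with weight $|\eta-\xi|^{(n-1)/2-\lambda}$ for a parameter $0<\lambda\le(n-1)/2$. The resulting geometric integral $\int_{\Gamma_r(\eta)}|\eta-\xi|^{-(n-1-2\lambda)}\,d\sigma$ is bounded by \Cref{lemma:integral} (not \Cref{lemma:integral2}), which produces the factor $(r|\eta|)^{2\lambda}$; this is where the $r$-growth originates. After the Fubini swap, the integral over $N_r(\xi)$ of the radial function $|\widehat{f_2}(\eta-\xi)|^2|\eta-\xi|^{n-1-2\lambda}$ is handled by \Cref{lemma:Mformula}, a consequence of the Leckband formula \Cref{prop:BeltMelin} for radial integrands on spheres. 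This is the key tool exploiting radiality, and it yields the factor $(r/(1+r^2))^{(n-1)/2}$; the product $r^{2\lambda}(r/(1+r^2))^{(n-1)/2}$ then forces $\lambda<(n+3)/4$, which is exactly what gives $\gamma>0$. Setting $\beta=\alpha-1+\lambda$ and sorting the constraints on $\lambda$ produces \eqref{eq:primrange}. No test function ever enters.

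Your latitudinal slicing is essentially a rederivation by hand of \Cref{lemma:Mformula}, and the duality route adds a layer of Cauchy--Schwarz that must be unwound at the end. The step you call the ``final swap of integration order'' is where the proposal is thinnest: after separating $g$ from $\widehat{f_2}$ on $N_r(\xi)$, integrating in $\xi$, and swapping back to $\Gamma_r(\eta)$, you need the residual kernel to integrate to at most $C\jp{\eta}^{-2\alpha}$ with the correct $r$-dependence, and your appeal to \Cref{lemma:integral2} (uniform in the radius) rather than \Cref{lemma:integral} (radius-dependent, giving $\rho^{2\lambda}$) suggests the $r$-tracking will not close as written. I would recommend the paper's more direct scheme: square, Cauchy--Schwarz on $\Gamma_r^{\pm}(\eta)$ with weight $|\eta-\xi|^{(n-1)/2-\lambda}$, swap via \Cref{lemma:fubini}, and invoke \Cref{lemma:Mformula} for the radial integral on $N_r(\xi)$.
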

In  the proof we use the following couple of results about integration on spheres.

Let $h: (0,\infty) \to \CC$ be a measurable function.  Let $x\in \RR^n/\{0\}$  and $b>0$, and consider the functional defined by the expression 
\[F_{x,b}(h) := \int_{\SP_b(x)} h(|z|) \, d\sigma_b(z),\]
where $\sigma_b$ is the Lebesgue measure of $\SP_b(x) \subset \RR^n $, the sphere of radius $b$ and center $x$.
\begin{proposition}  \label{prop:BeltMelin}
There is a measure $\mu_{x,b}$ on the real line, absolutely continuous with respect the Lebesgue measure, such that
\[F_{x,b}(h) = \int_0^\infty h(t)\, d \mu_{x,b}(t) .\]
Moreover, $\mu_{x,b}$ satisfies
\begin{equation} \label{eq:BeltMelin}
\frac{d \mu_{x,b}}{dt} = 2^{3-n}c_{n-1} \chi_{x,b}(t)|x|^{2-n} b t  \left( (|x|+b)^2-t^2 \right)^{(n-3)/2} \left (t^2 -(|x|-b)^2\right)^{(n-3)/2},
\end{equation}
 where $\chi_{x,b}$ is the characteristic function of the interval $\left( \left| |x| -b \right|, |x| + b \right)$ and $c_n = |\SP^{n-1}|$. 
\end{proposition}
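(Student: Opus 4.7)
The plan is to reduce the spherical integral to a one-dimensional integral by exploiting the rotational symmetry around the axis through the origin and the center $x$, and then change variable to $t=|z|$. Since the integrand $h(|z|)$ depends on $z$ only through $|z|$, and $|z|$ depends on $z \in \SP_b(x)$ only through the angle that $z-x$ makes with $x$, the computation is essentially one-dimensional.

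Concretely, I would parametrize $\SP_b(x)$ by $z = x + b\omega$ with $\omega \in \SP^{n-1}$, and write $\omega = \cos\theta \cdot e_x + \sin\theta \cdot \omega'$, where $e_x = x/|x|$, $\theta \in [0,\pi]$, and $\omega' \in \SP^{n-2} \subset e_x^{\perp}$. The induced measure factors as
\[
d\sigma_b(z) = b^{n-1}\sin^{n-2}\theta \, d\theta \, d\sigma(\omega').
\]
Since $|z|^2 = |x|^2 + b^2 + 2b|x|\cos\theta$ is independent of $\omega'$, integrating out $\omega'$ contributes the factor $c_{n-1} = |\SP^{n-2}|$, yielding
\[
F_{x,b}(h) = b^{n-1}c_{n-1}\int_0^{\pi} h\!\left(\sqrt{|x|^2+b^2+2b|x|\cos\theta}\right) \sin^{n-2}\theta \, d\theta.
\]

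Next I would perform the change of variable $t = |z| = (|x|^2+b^2+2b|x|\cos\theta)^{1/2}$, which maps $\theta \in [0,\pi]$ bijectively onto the interval $(\bigl||x|-b\bigr|, |x|+b)$ (giving the characteristic function $\chi_{x,b}$). Differentiating yields $t\, dt = -b|x|\sin\theta \, d\theta$, so $d\theta = -t\, dt/(b|x|\sin\theta)$. Using the identity
\[
(2b|x|\sin\theta)^2 = (2b|x|)^2 - (t^2 - |x|^2 - b^2)^2 = \bigl((|x|+b)^2 - t^2\bigr)\bigl(t^2 - (|x|-b)^2\bigr),
\]
one finds
\[
\sin^{n-2}\theta \, d\theta = \frac{t\,\bigl((|x|+b)^2 - t^2\bigr)^{(n-3)/2}\bigl(t^2 - (|x|-b)^2\bigr)^{(n-3)/2}}{2^{n-3}(b|x|)^{n-2}}\, dt.
\]
Substituting this into the expression for $F_{x,b}(h)$ and absorbing the powers of $b$ and $|x|$ gives exactly the density \eqref{eq:BeltMelin}, and absolute continuity with respect to Lebesgue measure is automatic from the explicit formula.

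There is no real obstacle here: the argument is a direct computation, and the only mild points to watch are (i) that the factorization of $\sin^2\theta$ produces the two "Ewald-like" factors in the kernel, and (ii) that the exponent $(n-3)/2$ may be negative when $n=2$, so the density should be read as an integrable singularity at the endpoints rather than a smooth function — this is fine since the statement only asserts absolute continuity with respect to Lebesgue measure.
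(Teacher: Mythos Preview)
Your argument is correct. The parametrization $z=x+b\omega$, the reduction to the polar angle $\theta$ via the factor $c_{n-1}$, and the change of variable $t=|z|$ with the factorization
\[
(2b|x|\sin\theta)^2=\bigl((|x|+b)^2-t^2\bigr)\bigl(t^2-(|x|-b)^2\bigr)
\]
give exactly the stated density after collecting powers of $b$ and $|x|$; your remark about the integrable endpoint singularities when $n=2$ is also accurate.

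The paper itself does not prove this proposition: it simply records the formula and cites Leckband and Beltran--Melin for a proof. So there is nothing to compare against; you have supplied the standard direct computation that those references carry out.
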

This formula is a result of  \cite{Leckband},  a proof can be found also in \cite{BM10}. With this proposition we can prove the following lemma.
\begin{lemma} \label{lemma:Mformula}
Let $h$ as before and $f(x) := h(|x|)$. Then, if $r\ne 1$ we have that
\begin{equation} \label{eq:Mformula}
\int_{N_r(\xi)} f(\eta-\xi) \,  d\sigma_{r,\xi}(\eta) \le C \left(\frac{r}{1+r^2}\right)^{(n-1)/2}\int_{0}^{\infty} h(t) t^{n-2} \,dt.
\end{equation}
\end{lemma}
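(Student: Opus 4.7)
The plan is to translate the integration to a standard sphere, apply the Belt--Melin formula from Proposition~\ref{prop:BeltMelin}, and then establish a clean pointwise bound on the resulting radial density. I would begin by substituting $z=\eta-\xi$: the defining relation $|\xi-\eta/2|=r|\eta/2|$ becomes $|\xi-z|=r|z+\xi|$, and expanding (this is where $r\neq 1$ is used) shows that $z$ lies on the sphere $\SP_b(c)\subset\RR^{n}$ of center $c=\frac{1+r^{2}}{1-r^{2}}\xi$ and radius $b=\frac{2r|\xi|}{|1-r^{2}|}$. A short direct computation then yields the pleasant identity $|c|^{2}-b^{2}=|\xi|^{2}$, so that in particular $|c|>b$ whenever $r\neq 1$ and $b/|c|=2r/(1+r^{2})$ depends only on $r$. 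After translation, the integral equals $\int_{\SP_b(c)}h(|z|)\,d\sigma_b(z)$, and Proposition~\ref{prop:BeltMelin} applied at $x=c$ rewrites this as $\int_{0}^{\infty}h(t)\rho(t)\,dt$, where $\rho$ is the density \eqref{eq:BeltMelin}, supported on $[\,||c|-b|,\,|c|+b\,]$.

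The heart of the argument is then a pointwise estimate on the ratio $\rho(t)/t^{n-2}$. Setting $T=t^{2}$ and $T_{\pm}=(|c|\pm b)^{2}$, a routine algebraic rearrangement of \eqref{eq:BeltMelin} gives
\[
\frac{\rho(t)}{t^{n-2}}=C_{n}\,|c|^{2-n}\,b\,\Biggl[\frac{(T_{+}-T)(T-T_{-})}{T}\Biggr]^{(n-3)/2}.
\]
The crucial step is the elementary one-variable maximization of $T\mapsto (T_{+}-T)(T-T_{-})/T$ on $[T_{-},T_{+}]$, whose critical point is the geometric mean $T=\sqrt{T_{+}T_{-}}=|c|^{2}-b^{2}=|\xi|^{2}$, with value $(\sqrt{T_{+}}-\sqrt{T_{-}})^{2}=4b^{2}$. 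For $n\ge 3$ the exponent $(n-3)/2$ is nonnegative, so this bound plugs in directly and yields
\[
\frac{\rho(t)}{t^{n-2}}\le C'_{n}\,\Bigl(\frac{b}{|c|}\Bigr)^{n-2}=C'_{n}\Bigl(\frac{2r}{1+r^{2}}\Bigr)^{n-2}.
\]

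Finally, since $r/(1+r^{2})\le 1/2$ for all $r>0$ and $n-2\ge(n-1)/2$ whenever $n\ge 3$, the excess exponent is absorbed into the constant, producing the desired pointwise bound
\[
\rho(t)\le C(n)\Bigl(\frac{r}{1+r^{2}}\Bigr)^{(n-1)/2}t^{n-2},
\]
after which multiplying by $h(t)\ge 0$ and integrating in $t$ completes the proof. The only non-routine ingredient is the one-variable maximization above; the remarkable identity $|c|^{2}-b^{2}=|\xi|^{2}$ is what makes the $\xi$-dependence drop out of the final constant automatically. The borderline case $n=2$, where $(n-3)/2<0$ and $\rho$ instead has integrable endpoint singularities, would need to be handled by a separate argument.
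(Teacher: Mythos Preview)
Your proof is correct and follows the same overall strategy as the paper: translate by $\xi$, apply Proposition~\ref{prop:BeltMelin} with $x=c=\frac{1+r^2}{1-r^2}\xi$ and $b=\frac{2r|\xi|}{|1-r^2|}$, and then bound the density $d\mu_{x,b}/dt$ pointwise by a multiple of $(r/(1+r^2))^{(n-1)/2}t^{n-2}$. The only difference is in how that pointwise bound is obtained. The paper uses the cruder but more direct inequalities $t^2-(|x|-b)^2\le t^2$ and $(|x|+b)^2-t^2\le 4|x|b$ on the support, which after multiplying out give $(b/|x|)^{(n-1)/2}$ immediately, with no optimization needed. Your exact maximization of $(T_+-T)(T-T_-)/T$ yields the sharper exponent $(b/|c|)^{n-2}$, which you then relax back to $(n-1)/2$; this extra precision is not used downstream, so the paper's two-line bound is the more economical route. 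Both arguments require $n\ge 3$ for the exponent $(n-3)/2$ to be nonnegative, exactly as you note.
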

\begin{proof}
By (\ref{eq:Nr}) we know that
$N_r(\xi)$ is a sphere of center $\frac{2\xi}{1-r^2}$ and radius $ b= \frac{2|\xi|r}{|1-r^2|}$. Since in (\ref{eq:Mformula}) $f$ is valued in  in $\eta - \xi$, we can apply  \Cref{prop:BeltMelin} with $x = \frac{2\xi}{1-r^2} -\xi$,
 \[\int_{N_r(\xi)}f(\eta -\xi) d\sigma_{r, \xi}(\eta)=\int_{\mathbb{S}_b(x)}h(|z|)d \sigma_b(z)=\int_0^\infty h(t)d \mu_{x,b}(t).\]
 
On the other hand, if $t \in (||x|-b|,|x|+b)$   we obtain the inequalities      
$$t^2-(|x|-b)^2 \leq t^2 \;\; \textrm{ and } \;\; (|x|+b)^2-t^2 \leq 4|x|b,$$
and from \eqref{eq:BeltMelin}, since  $|x| = |\xi|\frac{1+r^2}{|1-r^2|}$, we get
\begin{align*}
\frac{d \mu_{x,b}}{dt} &\le C |x|^{2-n + (n-3)/2} b^{1+(n-3)/2}t^{n-2} =C \left( \frac{b}{|x|} \right)^{(n-1)/2}t^{n-2} \\
&\le C \left(  \frac{r}{1+r^2} \right)^{(n-1)/2}t^{n-2},
\end{align*}
which gives the desired result.
\end{proof}

\begin{lemma}\label{lemma:integral}
Let $\SP_\rho\subset\RR^n$ be any sphere of radius $\rho$ and let $\sigma_\rho$ be its Lebesgue measure.
Then for any $0< \lambda\le (n-1)/2$, we have that
\begin{equation*} 
\int_{\SP_\rho} \frac{1}{|x-y|^{(n-1)-2\lambda}} \, d\sigma_\rho(y) \le C_\lambda \rho^{2\lambda},
\end{equation*}
 for any $x\in\RR^n$, and for a constant $C_\lambda$ that only depends on  $\lambda$. 
\end{lemma}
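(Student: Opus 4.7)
The plan is to reduce the statement to a uniform bound on the unit sphere by translation and scaling, and then to handle the resulting singularity directly in spherical coordinates. First, after translating so that $\SP_\rho$ is centered at the origin and setting $y=\rho z$, $x=\rho w$, the Jacobian $\rho^{n-1}$ combines with $\rho^{-(n-1)+2\lambda}$ from the integrand to produce exactly $\rho^{2\lambda}$, so the statement reduces to showing
\[ \sup_{w\in\RR^n}\int_{\SP^{n-1}} \frac{d\sigma(z)}{|w-z|^{(n-1)-2\lambda}}\le C_\lambda. \]
By rotation invariance of $\sigma$ one may then assume $w=te_1$ with $t\ge 0$.

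Next I would split into two regimes according to the distance of $w$ from $\SP^{n-1}$. When $|t-1|\ge 1/2$ one has $|w-z|\ge 1/2$ uniformly, so the integral is trivially bounded by $2^{(n-1)-2\lambda}|\SP^{n-1}|$; this is where the hypothesis $\lambda\le (n-1)/2$ is used, merely to keep this exponent nonnegative. When $1/2\le t\le 3/2$ I would parametrize $z=(\cos\theta,\sin\theta\,\omega')$ with $\theta\in[0,\pi]$ and $\omega'\in\SP^{n-2}$, so that $|w-z|^2=(1-t)^2+2t(1-\cos\theta)$. Combining with the elementary lower bound $1-\cos\theta\gtrsim \theta^2$ on $[0,\pi]$ and $2t\ge 1$, this gives $|w-z|\gtrsim |1-t|+\theta$, and after integrating out $\omega'$ it suffices to bound
\[ \int_0^\pi \frac{\theta^{n-2}}{(|1-t|+\theta)^{(n-1)-2\lambda}}\,d\theta \]
uniformly in $t$.

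The main task is to control this one-dimensional integral, which I would do by splitting at $\theta=|1-t|$. On the near piece $\theta\le|1-t|$ the denominator is comparable to $|1-t|^{(n-1)-2\lambda}$ and the contribution is of order $|1-t|^{2\lambda}$; on the far piece $\theta\ge|1-t|$ the integrand is dominated by $\theta^{2\lambda-1}$, which is integrable on $(0,\pi]$ precisely because $\lambda>0$. Both bounds are uniform in $t\in[1/2,3/2]$, which closes the argument. The only subtle point is the lower bound $|w-z|\gtrsim|1-t|+\theta$, but this is straightforward from the formula above, and it is exactly the step that forces the two-sided constraint $0<\lambda\le (n-1)/2$: $\lambda>0$ to make the far piece converge, and $\lambda\le (n-1)/2$ to keep the easy regime bounded.
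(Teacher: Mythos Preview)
Your argument is correct. The scaling reduction is exactly right, the rotation to $w=te_1$ is legitimate, and in the near regime $1/2\le t\le 3/2$ the lower bound $|w-z|\gtrsim |1-t|+\theta$ follows cleanly from $|w-z|^2=(1-t)^2+2t(1-\cos\theta)$ together with $1-\cos\theta\gtrsim\theta^2$ and $2t\ge 1$. The splitting of the one-dimensional integral at $\theta=|1-t|$ then gives the uniform bound, using $\lambda>0$ for integrability of $\theta^{2\lambda-1}$ near $0$ and $\lambda\le(n-1)/2$ so that the far regime $|t-1|\ge 1/2$ does not blow up as $t\to\infty$.

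Your route is genuinely different from the one the paper points to. The paper does not prove this lemma in full but refers to an external reference and notes that the computation parallels that of \Cref{lemma:integral2}; that argument proceeds by projecting the sphere onto the hyperplane through the origin orthogonal to $x/|x|$, observing that on the half-sphere nearest $x$ (minus a thin equatorial band) the projection has uniformly bounded Jacobian, and then bounding the resulting integral over $\RR^{n-1}$. Your approach instead exploits rotation invariance to reduce to a one-variable integral in the polar angle $\theta$, which is more elementary and entirely self-contained; it also makes the role of the endpoint conditions on $\lambda$ completely transparent. The projection method, on the other hand, is coordinate-free and adapts with no change to the setting of \Cref{lemma:integral2}, where the weight $\jp{x-y}^{-b}$ breaks the homogeneity that your scaling step relies on.
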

The proof of this lemma is very similar to the proof of \Cref{lemma:integral2}.  For the detailed computations see \cite[Lemma 3.3]{fix}. 
\begin{proof}[Proof of  \Cref{lemma:Kradial}]
Since ${\chi}(\eta)=0$  for $|\eta| \le 1$ (see the definition of $\chi$ just before \eqref{eq:cutoff}),  $ \jp{\eta} \le  2|\eta| $ in the region where $\chi$ does not vanish. Then
\begin{multline*}
\norm{ \widetilde K_r(\widehat{f_1},\widehat{f_2})}_{L^2_{\alpha}}  \le    C \left( \int_{\RR^n} |\eta|^{2\alpha-2} \left( \int_{\Gamma_r^+(\eta)} |\widehat{f_1}(\xi) ||\widehat{f_2}(\eta-\xi)| \, \ds{r}(\xi) \right)^2 d\eta \right)^{1/2} \\
  +   C \left( \int_{\RR^n} |\eta|^{2\alpha-2} \left( \int_{\Gamma_r^-(\eta)} |\widehat{f_1}(\xi) ||\widehat{f_2}(\eta-\xi)| \, \ds{r}(\xi) \right)^2 d\eta \right)^{1/2} := I_1 +I_2,
\end{multline*}
where $\Gamma_r^-(\eta) := \{\xi\in \Gamma(\eta):|\xi|<|\eta-\xi|\}$ is the complementary  of $\Gamma_r^+(\eta)$ (introduced in \Cref{lemma:KHolder}). We begin with the estimate of $I_1$.  

Consider a parameter $0<\lambda \le (n-1)/2$. By Cauchy-Schwarz inequality and  \Cref{lemma:integral} we have that
\begin{align*}
 \nonumber I_1^2 &\le C\int_{\RR^n}  |\eta|^{2\alpha-2} \int_{\Gamma_r^+(\eta)} |\widehat{f_1}(\xi) |^2|\widehat{f_2}(\eta-\xi)|^2|\eta - \xi|^{n-1-2\lambda}\, \ds{r}(\xi)\times \dots \\ 
 \nonumber  & \hspace{60mm}\dots \times \int_{\Gamma_r(\eta)} \frac{1}{|\eta-\xi|^{n-1 -2\lambda}} \, \ds{r}(\xi) \,d\eta \\
  &\le Cr^{2\lambda}\int_{\RR^n}  |\eta|^{2\alpha-2 + 2\lambda} \int_{\Gamma_r^+(\eta)} |\widehat{f_1}(\xi) |^2|\widehat{f_2}(\eta-\xi)|^2|\eta - \xi|^{n-1-2\lambda}\, \ds{r}(\xi).
\end{align*}
Then, using that $|\eta| \le 2|\xi|$ in $\Gamma_r^+(\eta)$ and  \Cref{lemma:fubini} to change the order of integration, yields
\begin{equation} \label{eq:I1est}
  I_1^2 \le   C r^{2\lambda}\int_{\RR^n}  |\widehat{f_1}(\xi) |^2 |\xi|^{2\alpha-2 +2\lambda}  \int_{N_r(\xi)}  |\widehat{f_2}(\eta-\xi)|^2|\eta - \xi|^{n-1-2\lambda} \, d\sigma_{r,\xi}(\eta)\, d\xi,
\end{equation}
 (notice that we need $2\alpha-1 +2\lambda \ge 0$).  From now on we fix $\lambda$ such that
\begin{equation} \label{eq:beta}
  \beta = \alpha-1+\lambda.
 \end{equation}
Since $|\widehat{f_2}(\xi)|$ is a radial function, we can write that $|\widehat{f}_2(\xi)| = g(|\xi|)$  for an appropriate function $g$. Then we can apply \Cref{lemma:Mformula} with  $h(t) = g(t)^2 t^{n-1-2\lambda}$ to the second integral of  \eqref{eq:I1est}, and this yields
\begin{align*}
I_1^2 &\le C r^{2\lambda}\left(\frac{r}{1+r^2}\right)^{(n-1)/2} \int_{\RR^n}  |\widehat{f_1}(\xi) |^2 |\xi|^{2\beta} \int_0^\infty  g(t)^2 t^{n-2-2\lambda} t^{n-1} \, dt \, d\xi \\
&\le  C r^{2\lambda}\left(\frac{r}{1+r^2}\right)^{(n-1)/2} \norm{{f_1}}_{{W}^{\beta,2}}^2\norm{{f_2}}_{{W}^{(n-2)/2 - \lambda,2}}^2.
\end{align*}
Analogously for $I_2$, by Cauchy-Schwarz inequality and  \Cref{lemma:integral} we have
\begin{align*}
 I_2^2 &\le C\int_{\RR^n}    |\eta|^{2\alpha-2} \int_{\Gamma_r^-(\eta)} |\widehat{f_1}(\xi) |^2|\xi|^{n-1-2\lambda}|\widehat{f_2}(\eta-\xi)|^2\, \ds{r}(\xi)\times \dots \\ & \hspace{60mm}\dots \times \int_{\Gamma_r(\eta)} \frac{1}{|\xi|^{n-1 -2\lambda}} \, \ds{r}(\xi) \,d\eta\\
 &\le Cr^{2\lambda} \int_{\RR^n}    |\eta|^{2\alpha-2 +2\lambda} \int_{\Gamma_r^-(\eta)} |\widehat{f_1}(\xi) |^2|\xi|^{n-1-2\lambda}|\widehat{f_2}(\eta-\xi)|^2\, \ds{r}(\xi).
\end{align*}
Then, changing the order of integration (\Cref{lemma:fubini}) and using that $|\eta| \le 2|\eta-\xi|$ in $\Gamma_r^-(\eta)$ gives
\[ I_2 \le  Cr^{2\lambda}\int_{\RR^n}  |\widehat{f_1}(\xi) |^2|\xi|^{n-2-2\lambda} \int_{N_r(\xi)}  |\eta-\xi|^{2\alpha-1 +2\lambda} |\widehat{f_2}(\eta-\xi)|^2 \, d\sigma_{r,\xi}(\eta)\, d\xi. \]

 Therefore  we can apply again  \Cref{lemma:Mformula}, this time with  $h(t) = g(t)^2t^{2\alpha-1 +2\lambda}$ and use \eqref{eq:beta} to get
\begin{align*}
I_2^2 &\le C r^{2\lambda}\left(\frac{r}{1+r^2}\right)^{(n-1)/2} \int_{\RR^n}  |\widehat{f_1}(\xi) |^2 |\xi|^{n-2-2\lambda} \int_0^\infty  |g(t)|^2 t^{2\beta} t^{n-1} \, dt \, d\xi \\
&\le  C r^{2\lambda}\left(\frac{r}{1+r^2}\right)^{(n-1)/2} \norm{{f_1}}_{{W}^{(n-2)/2 - \lambda,2}}^2 \norm{{f_2}}_{{W}^{\beta,2}}^2.
\end{align*}
Hence, putting together the estimates of $I_1$ and $I_2$ yields
\begin{multline} \label{eq:KradiaLast}
\norm{ \widetilde K_r(\widehat{f_1},\widehat{f_2})}_{L^2_{\alpha}}  \le Cr^{\lambda} \left(\frac{r}{1+r^2}\right)^{{(n-1)}/{4}}   \norm{{f_1}}_{{W}^{\beta,2}}\norm{{f_2}}_{W^{(n-2)/2 - \lambda,2}} \\
  + \norm{{f_1}}_{W^{(n-2)/2 - \lambda,2}}\norm{{f_2}}_{W^{\beta,2}}.
\end{multline}

Since we want to have the bound $r^{\lambda} \left(\frac{r}{1+r^2} \right)^{(n-1)/4}\le (1+r)^{1-\gamma}$ for some $\gamma>0$, we need to ask $\lambda-(n-1)/4<1$ and hence we need $\lambda<(n+3)/4$.

By \eqref{eq:beta},  the condition $2\alpha -1+ 2\lambda \ge 0$ used in the proof implies we must have $\beta  \ge -1/2$.

  Then, equation \eqref{eq:Kradial} follows directly from \eqref{eq:KradiaLast} in the range $\beta \ge (n-2)/2$. But, together with \eqref{eq:beta}, the restrictions imposed on $\lambda$ yield
\begin{equation}\label{eq:restriccion_1}
\begin{cases}
0 < \lambda < \frac{n+3}{4} \\ 
0 < \lambda \le \frac{n-1}{2}
\end{cases}   \Longleftrightarrow
 \begin{cases}
\beta+1- \frac{n+3}{4}< \alpha < \beta +1  \\ 
\beta +1 - \frac{n-1}{2} \le \alpha < \beta +1.
\end{cases}                
\end{equation}
We can discard the lower bounds for  $\alpha$ using that $\norm{f}_{L^2_\alpha} \le \norm{f}_{L^2_{\alpha'}}$  always holds if $\alpha\le \alpha'$. Therefore   only the restriction $ \alpha<\beta +1$ remains.

Otherwise, if $\beta$ is in the range $0 \le \beta < (n-2)/2 $, estimate (\ref{eq.thm.sph.1}) will follow if we add the extra condition  
\begin{equation} \label{eq:rest2}
(n-2)/2-\lambda \le \beta.
\end{equation}
Then, we have that $\beta \ge \min(-1/2,(n-7)/4)$ by the conditions on $\lambda$ given in the left hand side of \eqref{eq:restriccion_1}. Also, putting together (\ref{eq:beta}) and (\ref{eq:rest2}) we get $\alpha \le 2\beta-(n-4)/2$, which is a stronger condition than $\alpha<\beta+1$ since we are in the range $\beta<(n-2)/2$. Hence, we have obtained the ranges of parameters given in the statement.
\end{proof}

We can estimate now the principal value term.

\begin{proposition} \label{prop:PVSobolev}
Let $n \ge 2$ and  $q \in  W^{\beta,2}(\RR^n)$  be a radial function. Then we have that 
\[\norm{\widetilde{P}(q)}_{W^{\alpha,2}} \le C \norm{q}_{W^{\beta,2}_1}^2,\]
if  the following condition holds
\begin{equation} \label{eq:rangePradial}
  \alpha< \begin{cases}
 2\beta - (n-4)/2, \hspace{5 mm} if \hspace{5mm} \beta_0 \le  \beta < (n-2)/2, \\
  \beta + 1, \hspace{19.5mm} if \hspace{5mm} (n-2)/2 \le \beta<\infty .
         \end{cases} 
\end{equation}
\end{proposition}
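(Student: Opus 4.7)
The strategy follows closely the proof of \Cref{prop:PVHolder}, but working in $L^2_\alpha$ instead of $L^1_\alpha$ and using the sharper radial estimate of \Cref{lemma:Kradial}. The starting point is \Cref{lemma:PV} applied with $p=2$ and $\phi=\chi$. Since $\chi S_r(q)=\widetilde S_r(q)$, $\chi\partial_r S_r(q)=\partial_r\widetilde S_r(q)$, and by Plancherel $\|\widetilde P(q)\|_{W^{\alpha,2}}=\|\chi \widehat{P(q)}\|_{L^2_\alpha}$, the lemma reduces matters to two $r$-uniform estimates: an $L^2_{\alpha+\varepsilon}$ bound for $\widetilde S_r(q)$ with decay in $r$, and an $L^2_{\alpha-1}$ bound for $\partial_r\widetilde S_r(q)$ for $r$ in a neighbourhood of $1$.

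For the first ingredient, I would combine the pointwise inequality \eqref{eq:KandS} with \Cref{lemma:Kradial} applied to $f_1=f_2=q$ (note that $\widehat{q}$ is radial since $q$ is radial, so the ``radial second argument'' hypothesis holds). This gives
\begin{equation*}
\|\widetilde S_r(q)\|_{L^2_{\alpha+\varepsilon}}\le \frac{2}{1+r}\|\widetilde K_r(\widehat q,\widehat q)\|_{L^2_{\alpha+\varepsilon}}\le C(1+r)^{-\gamma}\|q\|_{W^{\beta,2}}^2,
\end{equation*}
as long as $\alpha+\varepsilon$ lies in the range \eqref{eq:primrange}. Choosing $\varepsilon>0$ small enough (depending on how strict the admissible range is), this ingredient is uniform in $r\in(0,\infty)$ with an $(1+r)^{-\gamma}$ decay that neutralises the $(1+r)^\gamma$ weight in \Cref{lemma:PV}.

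For the second ingredient, I would start from the pointwise bound \eqref{eq:Kpoint}. Taking $L^2_{\alpha-1}$ norms splits the problem in two. The term $\widetilde K_r(\widehat q,\widehat q)$ is handled directly by \Cref{lemma:Kradial} at exponent $\alpha-1$ (a strictly weaker range condition than what we ultimately need). For the term $|\eta|\,\widetilde K_r(\widehat{x_iq},\widehat q)$, I would use that the support of $\chi$ keeps $|\eta|$ bounded below, so $|\eta|^2\langle\eta\rangle^{2\alpha-2}\lesssim\langle\eta\rangle^{2\alpha}$ and hence
\begin{equation*}
\|\,|\eta|\,\widetilde K_r(\widehat{x_iq},\widehat q)\|_{L^2_{\alpha-1}}\le C\|\widetilde K_r(\widehat{x_iq},\widehat q)\|_{L^2_{\alpha}}.
\end{equation*}
This reduces to an application of \Cref{lemma:Kradial} with $f_1=x_iq$ (not necessarily radial) and $f_2=q$ (whose Fourier transform is radial), producing a bound in terms of $\|x_iq\|_{W^{\beta,2}}\|q\|_{W^{\beta,2}}$, which is controlled by $\|q\|_{W^{\beta,2}_1}^2$ via \Cref{remark:Sob}.

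Plugging these two uniform estimates into \Cref{lemma:PV}, letting $\varepsilon\to0^+$, and recording the resulting admissible range of $\alpha$ yields \eqref{eq:rangePradial} for $q\in\mathcal S(\RR^n)$. A standard density argument adapted to bilinear operators, in the spirit of \cite[Lemma 5.3]{back}, then extends the bound to all radial $q\in W^{\beta,2}(\RR^n)$. The main bookkeeping obstacle will be making sure that the three range conditions invoked (at exponent $\alpha+\varepsilon$ for $\widetilde S_r$, at $\alpha-1$ for $\widetilde K_r(\widehat q,\widehat q)$, and at $\alpha$ for $\widetilde K_r(\widehat{x_iq},\widehat q)$) are jointly consistent with the range \eqref{eq:rangePradial}; the binding constraint comes from the last of these and matches precisely the high-$\beta$ and low-$\beta$ regimes appearing in the statement.
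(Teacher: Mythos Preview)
Your proposal is correct and follows essentially the same route as the paper: apply \Cref{lemma:PV} with $p=2$, $\phi=\chi$, feed in the $L^2_{\alpha+\varepsilon}$ bound for $\widetilde S_r(q)$ coming from \eqref{eq:KandS} and \Cref{lemma:Kradial}, control $\partial_r\widetilde S_r(q)$ in $L^2_{\alpha-1}$ via \eqref{eq:Kpoint} (the $|\eta|$ factor shifting the weight by one), and finish by density. The only cosmetic remark is that one does not ``let $\varepsilon\to0^+$'' at the end; one simply fixes a suitable $\varepsilon>0$ for each admissible $\alpha$ in \eqref{eq:rangePradial}, exactly as you indicated earlier in the argument.
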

\begin{proof}

 Let $n\ge 2$ and assume that $q\in \mathcal S(\RR^n) $ is a radial function.  Let's multiply \eqref{eq:KandS} by $\chi(\eta)$ and apply  \Cref{lemma:Kradial}. Then, if $r\in (0,\infty)$ and $r\neq 1$, for each $\alpha$ in the range \eqref{eq:rangePradial} we can choose an $\varepsilon=\varepsilon(\alpha,\beta) >0$  such that $\alpha +\varepsilon$ is smaller than the left hand side of  \eqref{eq:primrange}. Hence \Cref{lemma:Kradial} gives the estimate
\begin{equation} \label{eq:SrRadial}
\norm{ \widetilde S_r(q)}_{L^2_{\alpha + \varepsilon}}\le C  (1+r)^{-\gamma} \norm{q}_{W^{\beta,2}}^2, 
\end{equation}
for some $\gamma>0$, which can depend on $\beta$.

 Also, multiplying \eqref{eq:Kpoint} by $\chi(\eta)$ and  taking the $L^{2}_{\alpha-1}$ norm we get
\begin{equation} \label{eq:Kpoint2}
  \norm{\partial_r \widetilde S_{r}(q)}_{L^2_{\alpha-1}} 
   \le C  \norm{ \widetilde{K}_{r}(\widehat{q},\widehat{q}) }_{L_{\alpha-1}^2}+  C \sum_{i=1}^{n} \norm{\widetilde K_{r}(\widehat{x_iq},\widehat q)}_{ L^2_\alpha},
\end{equation}
assuming that $r \in(1+\delta,1-\delta)$,  for some  $0<\delta<1$ fixed.  Then, if we also consider $r\neq 1$, we can apply again \Cref{lemma:Kradial} to the first term on the right hand side with $f_1 =f_2= q$, and to the remaining terms with $f_2 = q$ to obtain
\begin{equation*} 
 \norm{\widetilde K_{r}(\widehat{x_iq},\widehat q)}_{ L^2_\alpha}\le C  \norm{x_iq}_{W^{\beta,2}} \norm{q}_{W^{\beta,2}} \le \norm{q}_{W_1^{\beta,2}} \norm{q}_{W^{\beta,2}},
\end{equation*}
in the range \eqref{eq:rangePradial} (we have used again \Cref{remark:Sob}). This yields 
\begin{equation}  \label{eq:Srpartial}
\norm{  \partial_r \widetilde{S}_{r}(q)}_{ L^2_{\alpha-1}}\le C  { \norm{q}_{W_1^{\beta,2}}^2},
\end{equation}
for $r \in(1+\delta,1-\delta)$, $r\neq 1$. Hence, we can apply  \Cref{lemma:PV} with $p=2$ and $\phi(\eta) = \chi(\eta)$ to obtain the desired estimate for $\widetilde P(q)$ and q in the Schwartz class. The extension for $q \in  W^{\beta,2}(\RR^n)$ follows by a density argument as we mentioned in the proof of \Cref{prop:PVHolder}.
\end{proof}

Since  \Cref{lemma:Kradial} does not include the case $r=1$, to estimate  $S(q)$ we need to study separately this case  (we remind that $S(q) = S_{1}(q)$ as we defined in section \ref{sec:santa}).

\begin{proposition}  \label{prop:S1radial}
Let $n \ge 2$ and  $q \in  W^{\beta,2}(\RR^n)$ be a radial function. Then we have that 
\[\norm{ \widetilde S(q)}_{W^{\alpha,2}} \le C \norm{q}_{W^{\beta,2}}^2,\]
in the range \eqref{eq:rangePradial}.
\end{proposition}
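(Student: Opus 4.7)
The case $r=1$ is separated from Lemma \ref{lemma:Kradial} because $N_1(\xi)$ degenerates from a sphere to a hyperplane, so Lemma \ref{lemma:Mformula} is unavailable.  My plan is to mirror the $L^1$ argument of Proposition \ref{prop:S1Holder}, but to exploit radiality of $\widehat q$ in order to avoid Santal\'o's formula entirely.  Writing $|\widehat q(\xi)|=g(|\xi|)$ and using the involution $\xi\leftrightarrow\eta-\xi$ on $\Gamma(\eta)$, we restrict to $\Gamma^+(\eta)=\{\xi:|\xi|\ge|\eta-\xi|\}$, where the identity $|\xi|^2+|\eta-\xi|^2=|\eta|^2$ forces $|\eta|/\sqrt{2}\le|\xi|\le|\eta|$.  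Since $\jp{\eta}\sim|\eta|$ on $\supp \chi$, this leaves
\[\norm{\widetilde S(q)}_{W^{\alpha,2}}^2\le C\int_{\RR^n}|\eta|^{2\alpha-2}\Big(\int_{\Gamma^+(\eta)}g(|\xi|)g(|\eta-\xi|)\,d\sigma_\eta(\xi)\Big)^2d\eta.\]

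The first key step is a weighted Cauchy--Schwarz on the inner integral, splitting the integrand with the dual weight $|\eta-\xi|^{\pm(n-1-2\lambda)/2}$ for a parameter $0<\lambda\le(n-1)/2$; Lemma \ref{lemma:integral}, applied on the sphere $\Gamma(\eta)$ of radius $|\eta|/2$, absorbs the negative power into a factor $C|\eta|^{2\lambda}$.  Next, I would switch the order of integration via Lemma \ref{lemma:fubini}; the resulting inner integration is now over $N^+(\xi)=\{\xi+v:v\perp\xi,\,|v|\le|\xi|\}$, where the inequalities $|\xi|\le|\eta|\le\sqrt{2}|\xi|$ permit replacing $|\eta|^{2\alpha-1+2\lambda}$ by $C|\xi|^{2\alpha-1+2\lambda}$ regardless of the sign of the exponent.

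Radiality now enters decisively: on the $(n-1)$-dimensional hyperplane $N(\xi)$ the integrand depends only on $|v|$, so polar coordinates give $\int_{N^+(\xi)}g(|v|)^2|v|^{n-1-2\lambda}d\sigma_\xi(v)=C\int_0^{|\xi|}g(s)^2 s^{2n-3-2\lambda}\,ds$, and a second set of polar coordinates on $\RR^n$ reduces the whole bound to the one-dimensional Hardy-type estimate
\[\norm{\widetilde S(q)}_{W^{\alpha,2}}^2\le C\int_0^\infty g(t)^2\, t^{n-3+2\alpha+2\lambda}\int_0^t g(s)^2\, s^{2n-3-2\lambda}\,ds\,dt.\]
Comparing against $\norm{q}_{W^{\beta,2}}^2\sim\int_0^\infty(1+t)^{2\beta}g(t)^2 t^{n-1}\,dt$, the desired bound follows whenever the weights $s^{n-2-2\lambda}(1+s)^{-2\beta}$ and $t^{2\alpha-2+2\lambda}(1+t)^{-2\beta}$ are uniformly bounded, i.e.\ whenever $(n-2)/2-\beta\le\lambda\le\beta+1-\alpha$.

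The main obstacle is therefore the reconciliation of the three constraints $0<\lambda\le(n-1)/2$, $\lambda\ge\max(0,(n-2)/2-\beta)$, and $\lambda\le\beta+1-\alpha$.  A valid $\lambda$ exists precisely in the range \eqref{eq:rangePradial}: when $\beta\ge(n-2)/2$ the lower bound collapses and only $\alpha<\beta+1$ is needed, while for $\beta_0\le\beta<(n-2)/2$ the compatibility $(n-2)/2-\beta<\beta+1-\alpha$ is equivalent to $\alpha<2\beta-(n-4)/2$; the threshold $\beta_0$ is exactly what keeps $(n-2)/2-\beta$ within the admissible range for $\lambda$.  A standard density argument (as in the proof of Proposition \ref{prop:PVHolder}) then passes from Schwartz radial functions to $W^{\beta,2}(\RR^n)$ radial ones.
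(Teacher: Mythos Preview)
Your proof is correct and follows essentially the same approach as the paper: weighted Cauchy--Schwarz with $|\eta-\xi|^{\pm(n-1-2\lambda)/2}$, Lemma~\ref{lemma:integral} on $\Gamma(\eta)$, the Fubini Lemma~\ref{lemma:fubini}, and then radiality plus polar coordinates on the hyperplane $N(\xi)$. The only cosmetic difference is that you cast the endgame as a one-dimensional Hardy-type inequality in $g$, whereas the paper recognizes the inner hyperplane integral directly as $\int_{\RR^n}|\widehat q(v)|^2|v|^{n-2-2\lambda}\,dv\le C\norm{q}_{W^{(n-2)/2-\lambda,2}}^2$ and then invokes the parameter analysis already carried out at the end of Lemma~\ref{lemma:Kradial}.
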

\begin{proof}
We can reason exactly in the same way we did in \Cref{lemma:Kradial} to arrive to equation \eqref{eq:I1est}. Since now $r=1$, this gives
\begin{equation} \label{eq:SradialLast}
 \norm{\widetilde{S}(q)}_{L^2_\alpha}
 \le  C \frac{}{} \int_{\RR^n}  |\widehat{q}(\xi) |^2 |\xi|^{2\alpha-2 +2\lambda}  \int_{N(\xi)}  |\widehat{q}(\eta-\xi)|^2|\eta - \xi|^{n-1-2\lambda} \, d\sigma_\xi(\eta)\, d\xi,
\end{equation}
the only difference is that $N(\xi)$ is now an hyperplane and not a sphere. As in \eqref{eq:long2}, in the second integral we introduce the change  of variables $v=\eta-\xi$ which  translates the  $N(\xi)$ to the origin. Then we can take polar coordinates $v=s\theta$ in the resulting hyperplane. With a slight abuse of notation we can write that $\widehat{q}(s\theta) = \widehat{q}(s)$, since $\widehat{q}$  is radial.   This yields 
\begin{align*}
 \int_{N(\xi)}  |\widehat{q}(\eta-\xi)|^2|\eta - \xi|^{n-1-2\lambda} \, d\lambda_\xi(\eta)\, &= \int_0^{|\xi|}   |\widehat{q}(s)|^2s^{n-1-2\lambda} s^{n-2} \,  ds \\
 &= \int_{|v| \le |\xi|} |\widehat{q}(v)|^2|v|^{n-2-2\lambda} \, dv,
\end{align*}
and hence
\begin{multline*}
 \norm{\widetilde{S}(q)}_{L^2_\alpha}
 \le  C \frac{}{} \int_{\RR^n}  |\widehat{q}(\xi) |^2 |\xi|^{2\alpha-2 +2\lambda} \,  d\xi  \int_{\RR^n} |\widehat{q}(v)|^2|v|^{n-2-2\lambda} \, dv \\
 \le C \norm{q}_{W^{\beta,2}} \norm{q}_{W^{(n-2)/2 -\lambda,2}}.
\end{multline*}
Using \eqref{eq:beta} and choosing the parameters as in the final part of \Cref{lemma:Kradial} we get the desired result.
\end{proof}

\begin{proof}[Proof of $\Cref{teo:main2}$]
Multiplying  \eqref{eq:Q2strct} by the cut-off $\chi(\eta)$ we get
\begin{equation} \label{eq:Q2strct2}
 \widehat{\widetilde{Q}_{2}(q)}(\eta) = \widetilde{S}(q)(\eta) + \widetilde{P}(q)(\eta).
\end{equation}
Then \Cref{prop:PVSobolev} and  \Cref{prop:S1radial} together with  Plancherel theorem give the desired estimate for $\widetilde{Q}_2(q)$. As mentioned in the introduction, the necessary condition for $\varepsilon(\beta)$ was proved in \cite[Theorem 1.4]{back}.  
\end{proof} 

\begin{proof}[Proof of $\Cref{teo:corRadial2}$]
By \Cref{prop:convergence} and \eqref{eq:bornseries} it is enough to show that 
\[
\norm{\widetilde{Q}_2(q)}_{W^{\alpha,2}} < \infty ,
\]
 for $\alpha<\beta + \varepsilon(\beta)$ and $\varepsilon(\beta)$ given by \eqref{eq:epsQ2}. We sketch the main ideas of the proof.

Observe that in \Cref{lemma:Kradial}, we have used only that $|\widehat{f_2}(\xi)|$ is a radial function. By \eqref{eq:def:K} and the assumption that $|\widehat{q} |\le \widehat{g}$, we have
\[
\widetilde{K}_r(\widehat{f_1},\widehat{q}) \le \widetilde{K}_r(\widehat{f_1},\widehat{g}).
\]
Hence applying \Cref{lemma:Kradial} to the right hand side with $f_2 = g$ yields
$\norm{\widetilde{K}_r(\widehat{f_1},\widehat{q})}_{L^2_\alpha} < \infty$. This estimate can be used to show that the $L^2_\alpha$ norms of $\widetilde S_r(q)$ and $\widetilde \partial_r S_r(q)$  are finite in the desired range of $\alpha$, exactly with the same method used to obtain \eqref{eq:SrRadial} and \eqref{eq:Srpartial}. Then, as we have shown in the proof of \Cref{prop:PVSobolev}, we get $\norm{\widetilde{P}(q)}_{L^2_\alpha} < \infty$. The case of $S(q)$ much simpler, just observe that, since we are assuming $|\widehat{q} |\le \widehat{g}$ , \eqref{eq:SradialLast} can be replaced by
\[
 \norm{\widetilde{S}(q)}_{L^2_\alpha}
 \le  C \frac{}{} \int_{\RR^n}  |\widehat{q}(\xi) |^2 |\xi|^{2\alpha-2 +2\lambda}  \int_{N(\xi)}  |\widehat{g}(\eta-\xi)|^2|\eta - \xi|^{n-1-2\lambda} \, d\sigma_\xi(\eta)\, d\xi.
\]
Then the rest of the proof yields $\norm{\widetilde{S}(q)}_{L^2_\alpha} < \infty$. As a consequence Plancherel and \eqref{eq:Q2strct2} are enough to conclude that $\norm{\widetilde{Q}_2(q)}_{W^{\alpha,2}} < \infty$.

The reader may object that in the proof of \Cref{prop:PVSobolev} and \Cref{lemma:PV} we have assumed that $q \in \mathcal S(\RR^n)$. This condition was assumed in order to have \eqref{eq:Kpoint}, but  actually this inequality  holds for the much larger class of potentials satisfying  $\widehat{q} \in  C^1(\RR^n)$, see \cite[Lemma 4.4]{back}. This yields the desired result  since we have assumed that $q$ is compactly supported, which implies $\widehat{q} \in  C^1(\RR^n)$.
\end{proof}

\appendix
\section{} \label{appendixa}

 We give now the proof of  \Cref{lemma:fubini} used in the estimate of the spherical operator $S_r$.
 The case of $r=1$ is proved in \cite{RV}. We prove a more general statement that has been used in \cite{BFPRM2}, for Ewald spheres that depend on two independent parameters instead of one. Let $a,b>0$, we define
 \[ \Phi := \{ (\xi,\eta) \in \RR^n \times \RR^n: |\xi -a \eta| = b|\eta|\},\]
\[ \Gamma_{a,b}(\eta):= \{ \xi \in \RR^n  : |\xi -a \eta| = b|\eta|\},\qquad  N_{a,b}(\xi):= \{ \eta \in \RR^n  : |\xi -a \eta| = b|\eta|\}, \]
and let respectively  be $\sigma_{a,b,\eta}(\xi)$ and $\sigma_{a,b,\xi}(\eta)$   the restriction of the Lebesgue measure to the last two submanifolds of $\RR^n$. In this case $N_{a,b}(\xi)$ is the sphere of center $\frac{a}{(a^2-b^2)}\xi$ and radius $\frac{b}{|a^2-b^2|}|\xi|$.
\begin{lemma} \label{lemma:fubini2}
Let $f\in C^\infty_c(\RR^n \times \RR^n)$ and assume that $a \neq b$  . Then we have that
\[\int_{\RR^n}\int_{\Gamma_{a,b}(\eta)}  f(\eta,\xi)\, d\sigma_{a,b,\eta}(\xi)\, d\eta = \int_{\RR^n} \int_{N_{a,b}(\xi)} f(\eta,\xi) \frac{|\eta|}{|\xi|} \, d\sigma_{a,b,\xi}(\eta) \, d\xi.\]
\end{lemma}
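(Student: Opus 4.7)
The plan is to realize both iterated integrals as a single distributional integral against a delta function on the defining hypersurface, and then use Fubini. Set $F(\xi,\eta) := |\xi-a\eta|^2 - b^2|\eta|^2$, so that $\Phi = F^{-1}(0)$ and the formal identity
\[
\int_{\RR^n\times\RR^n} f(\eta,\xi)\,|\eta|\,\delta(F(\xi,\eta))\,d\xi\,d\eta
\]
will be computed in the two orders of integration, yielding the two sides of the lemma once the Jacobians are identified.

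First I would fix $\eta\ne 0$ and compute $\nabla_\xi F(\xi,\eta) = 2(\xi-a\eta)$, so $|\nabla_\xi F| = 2b|\eta|$ on $\Gamma_{a,b}(\eta)$. The coarea formula then gives
\[
\int_{\RR^n} g(\xi)\,\delta(F(\xi,\eta))\,d\xi = \frac{1}{2b|\eta|}\int_{\Gamma_{a,b}(\eta)} g(\xi)\,d\sigma_{a,b,\eta}(\xi).
\]
Next I would fix $\xi\ne 0$ and analyze $\nabla_\eta F = -2a(\xi-a\eta)-2b^2\eta$. Expanding $|\xi|^2 = |(\xi-a\eta)+a\eta|^2$ and using $|\xi-a\eta|^2 = b^2|\eta|^2$ on $\Phi$ yields the key identity
\[
2a(\xi-a\eta)\cdot\eta = |\xi|^2 - (a^2+b^2)|\eta|^2.
\]
Substituting this into $|\nabla_\eta F|^2 = 4a^2|\xi-a\eta|^2 + 8ab^2(\xi-a\eta)\cdot\eta + 4b^4|\eta|^2$ produces a miraculous cancellation leaving $|\nabla_\eta F|^2 = 4b^2|\xi|^2$ on $\Phi$. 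The coarea formula in $\eta$ then gives
\[
\int_{\RR^n} g(\eta)\,\delta(F(\xi,\eta))\,d\eta = \frac{1}{2b|\xi|}\int_{N_{a,b}(\xi)} g(\eta)\,d\sigma_{a,b,\xi}(\eta).
\]

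Finally I would apply Fubini to $\int_{\RR^{2n}} |\eta| f(\eta,\xi)\,\delta(F(\xi,\eta))\,d\xi\,d\eta$ in both orders: integrating in $\xi$ first cancels the $|\eta|$ factor against $1/(2b|\eta|)$ to produce $\frac{1}{2b}$ times the left-hand side of the claim, while integrating in $\eta$ first produces $\frac{1}{2b}$ times the right-hand side. Clearing the common factor yields the stated identity. The sets $\{\eta=0\}$ and $\{\xi=0\}$ (where the Jacobian formulas degenerate and, since $a\ne b$, the fibers $\Gamma_{a,b}(0)$, $N_{a,b}(0)$ collapse to a point) are of measure zero in the respective base integrations and can be ignored, which is justified rigorously by approximating the distribution $\delta\circ F$ by $\epsilon^{-1}\phi(F/\epsilon)$ for a smooth compactly supported bump $\phi$ and passing to the limit, using that $f$ is compactly supported. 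The main technical point is the cancellation giving $|\nabla_\eta F| = 2b|\xi|$, which produces the asymmetry factor $|\eta|/|\xi|$; everything else is standard coarea and Fubini.
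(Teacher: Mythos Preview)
Your proof is correct. The gradient computation $|\nabla_\eta F|=2b|\xi|$ on $\Phi$ checks out exactly as you wrote it, and once both coarea Jacobians are known the result is immediate.

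The paper takes a different route: it works with differential forms, writing the two natural volume forms $\omega_\eta\wedge d\eta$ and $\omega_\xi\wedge d\xi$ on the hypersurface $\Phi$ explicitly as contractions with the unit normals, then uses the relation $dF=0$ on $\Phi$ to eliminate one coordinate differential and compare the two forms in a common chart. After some algebraic manipulation the forms are seen to differ by $|\eta|/|\xi|$ up to sign. Conceptually both arguments amount to the same thing---slice $\Phi$ in the two fiber directions and compare Jacobians---but your coarea approach packages the computation much more efficiently: the single identity $2a(\xi-a\eta)\cdot\eta=|\xi|^2-(a^2+b^2)|\eta|^2$ does all the work, whereas the paper's form manipulation is longer and coordinate-dependent. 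The differential-forms viewpoint has the minor advantage of not invoking the $\delta$-function formalism (which you then have to justify by mollification), but your version is cleaner overall.
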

\Cref{lemma:fubini} is just the case $a=1/2$ and $b=r/2$ of the previous statement.
\begin{proof}
The result follows by direct computation using the language of differential forms. We denote respectively by $d\xi = d\xi_1 \wedge \dots  \wedge d\xi_n$ and $d\eta = d\eta_1 \wedge \dots  \wedge d\eta_n$ the volume form of $\RR^n$ in coordinates $(\xi_1,\dots,\xi_2)$ and $(\eta_1,\dots,\eta_2)$. Also, we denote by $\omega_\eta$ the  natural volume $n$-form of the sphere $\Gamma_r(\eta)$ and by $\omega_\xi$ the volume $n$-form of $N_r(\xi)$. Hence $\omega_\eta$ is associated to the measure $\sigma_{a,b,\eta}$ and $\omega_\xi$ to $\sigma_{a,b,\xi}$.

  Since $\Gamma_{a,b}(\eta)$ is an hypersurface, $\omega_\eta$ is just the contraction of its (exterior) unit normal vector field $\nu(\xi)$  with the volume form $d\xi$. Similarly,   $\omega_\xi$ is the contraction with the  unit normal field to $N_{a,b}(\xi)$, $\nu(\eta)$, with the volume form $d\eta$. Since both hypersurfaces are spheres, these vector fields can be computed very easily  in coordinates
\[\nu(\xi)=\frac{1}{b|\eta|}(\xi-a\eta), \hspace{5mm}\text{and} \hspace{5mm}\nu(\eta) = \frac{|a^2-b^2|}{b|\xi|}\left(\eta-\frac{a}{a^2-b^2}\xi\right).\]
 Therefore we can compute the following coordinate expressions,
\begin{align*}
\omega_\eta \wedge d\eta &= \frac{1}{b|\eta|}\sum_{i=1}^n (-1)^{i+1} (\xi_i-a\eta_i) d\xi_1 \wedge \dots \wedge \widehat{d\xi_i}\wedge \dots \wedge d\xi_n \wedge d\eta, \\
\omega_\xi \wedge d\xi &= \frac{|a^2-b^2|}{b|\xi|}\sum_{i=1}^n (-1)^{i+1} \left (\eta_i-\frac{a}{a^2-b^2} \xi_i \right) d\eta_1 \wedge \dots \wedge \widehat{d\eta_i}\wedge \dots \wedge d\eta_n \wedge d\xi, 
\end{align*}
where the notation $\widehat{d\xi_i}$ means that we are omitting the 1-form $d\xi_i$ in the wedge product. $\omega_\eta \wedge d\eta$ and $\omega_\xi \wedge d\xi $ are volume forms on the $\RR^n \times \RR^n$ submanifold $\Phi$. To compare them,  we want to write both forms in coordinates as similarly as possible . This can be achieved  by using the structural relation
\[\sum_{i=1}^{n} \left(  2 (a^2-b^2) \left( \eta_i-\frac{a}{a^2-b^2} \xi_i \right)d\eta_i + 2(\xi_i-a\eta_i)d\xi_i \right) = 0, \]
 obtained just by taking the exterior differential of the function $|\xi-a\eta|^2 - b^2|\eta|^2$, which is constant on $\Phi$ by definition. Assume that we are in the open set given by $(\xi_1-a\eta_1)\neq 0$ (we can choose any of the other possible conditions $(\xi_i-a\eta_i)\neq 0$ without difference). Then we can write
\[ d\xi_1 = \frac{1}{(a\eta_1-\xi_1)} \left( \sum_{i=1}^{n}   (a^2-b^2) \left( \eta_i-\frac{a}{a^2-b^2} \xi_i \right)d\eta_i + \sum_{i=2}^{n} (\xi_i-a\eta_i)d\xi_i \right) .\]
Introducing  this equation in the coordinate expressions of $\omega_\eta \wedge d\eta $ and $\omega_\xi \wedge d\xi$ most products cancel out, and after some computations we obtain that
\begin{align*}
\omega_\eta \wedge d\eta &= \frac{1}{b|\eta|(a\eta_1-\xi_1)}\sum_{i=1}^n -(\xi_i-a\eta_i)^2 d\xi_2 \wedge \dots  \dots \wedge d\xi_n \wedge d\eta \\
&=  -\frac{b|\eta|}{(a\eta_1-\xi_1)} d\xi_2 \wedge \dots  \dots \wedge d\xi_n \wedge d\eta, \\ 
\omega_\xi \wedge d\xi &=  \frac{|a^2-b^2|(a^2-b^2)}{b|\xi|(a\eta_1-\xi_1)}\sum_{i=1}^n (-1)^{n^2-1} \left (\eta_i-\frac{a}{a^2-b^2} \xi_i \right)^2 d\xi_2 \wedge \dots  \dots \wedge d\xi_n \wedge d\eta \\
&= (-1)^{n^2-1} \frac{|a^2-b^2|}{a^2-b^2}\frac{b|\xi|}{(a\eta_1-\xi_1)}   d\xi_2 \wedge \dots  \dots \wedge d\xi_n \wedge d\eta.
\end{align*}
Comparing both expressions we see that except for the sign, both volume forms on $\Phi$ differ by a $|\eta|/|\xi|$ factor. This yields the desired result, returning to the notation with the measures $\sigma_{a,b,\eta}$ and $\sigma_{a,b,\xi}$.
\end{proof}

\begin{proposition} \label{prop:L1Lamda}
Let $f \in \mathcal S'(\RR^n)$. Then we have that
\begin{equation*} 
 \norm{f}_{\Lambda^\alpha} \le   C\norm{\widehat{f}}_{L^1_\alpha} .
\end{equation*}
\end{proposition}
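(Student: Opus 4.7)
The proof I would give is essentially a Fourier-inversion argument, exploiting that $\widehat{f}\in L^1_\alpha$ with $\alpha\ge 0$ forces $\widehat{f}\in L^1$ and therefore makes $f$ a bounded continuous function. Write $\alpha=m+\sigma$ with $m\in\NN$ and $\sigma\in[0,1)$ as in the definition of the Hölder norm. Since $\widehat{f}\in L^1$, Fourier inversion gives
\[
f(x)=(2\pi)^{-n}\int_{\RR^n} e^{ix\cdot\xi}\,\widehat{f}(\xi)\,d\xi,
\]
and $f$ agrees a.e.\ with a continuous function.

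For each multi-index $\gamma$ with $|\gamma|\le m$, the integrand $(i\xi)^\gamma \widehat{f}(\xi)$ is in $L^1$ because $|\xi|^{|\gamma|}\le \jp{\xi}^{|\gamma|}\le \jp{\xi}^\alpha$, so differentiation under the integral sign is legitimate:
\[
\partial^\gamma f(x)=(2\pi)^{-n}\int e^{ix\cdot\xi}(i\xi)^\gamma\widehat{f}(\xi)\,d\xi,\qquad \|\partial^\gamma f\|_\infty \le C\int \jp{\xi}^{\alpha}|\widehat{f}(\xi)|\,d\xi = C\|\widehat{f}\|_{L^1_\alpha}.
\]
This handles all the $|\gamma|<m$ terms in the Hölder norm at once (and also $|\gamma|=m$ when $\sigma=0$).

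For $|\gamma|=m$ with $\sigma\in(0,1)$, I would estimate the Hölder seminorm by writing
\[
\partial^\gamma f(x)-\partial^\gamma f(x-t)=(2\pi)^{-n}\int e^{ix\cdot\xi}\bigl(1-e^{-it\cdot\xi}\bigr)(i\xi)^\gamma\widehat{f}(\xi)\,d\xi,
\]
and using the elementary inequality $|1-e^{-is}|\le C\min(1,|s|)\le C|s|^\sigma$, valid for all $\sigma\in[0,1]$. Applied pointwise with $s=t\cdot\xi$, this gives $|1-e^{-it\cdot\xi}|\le C|t|^\sigma|\xi|^\sigma$, hence
\[
\frac{|\partial^\gamma f(x)-\partial^\gamma f(x-t)|}{|t|^\sigma} \le C\int |\xi|^{m+\sigma}|\widehat{f}(\xi)|\,d\xi \le C\int \jp{\xi}^\alpha |\widehat{f}(\xi)|\,d\xi = C\|\widehat{f}\|_{L^1_\alpha}.
\]
Summing these bounds over $\gamma$ with $|\gamma|\le m$ yields $\|f\|_{\Lambda^\alpha}\le C\|\widehat{f}\|_{L^1_\alpha}$.

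There is no real obstacle here; the only mildly delicate point is the interpolation inequality $|1-e^{-is}|\le C|s|^\sigma$, which follows at once from $|1-e^{-is}|\le |s|$ when $|s|\le 1$ and $|1-e^{-is}|\le 2$ when $|s|>1$. Everything else is bookkeeping about the decomposition $\alpha=m+\sigma$ and the domination $|\xi|^{m+\sigma}\le\jp{\xi}^\alpha$.
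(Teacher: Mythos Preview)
Your proof is correct and follows essentially the same approach as the paper: both bound the $L^\infty$ norms of derivatives by Fourier inversion and $|\xi^\gamma|\le\jp{\xi}^\alpha$, and both handle the H\"older seminorm via the elementary inequality $|1-e^{is}|\le C|s|^\sigma$ obtained by splitting into $|s|\le 1$ and $|s|>1$. Your write-up is in fact slightly more careful in justifying differentiation under the integral and treating the $\sigma=0$ case explicitly.
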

\begin{proof}
Let $m$ be the integer part of $\alpha$, and $\gamma$  any multi-index such that $|\gamma|\le m$. Then we have that
\[\norm{\partial^\gamma f}_{L^\infty} \le C \int_{\RR^n} |\xi^{\gamma}\widehat{f}(\xi)|\,d\xi \le C \int_{\RR^n} \jp{\xi}^{\alpha}|\widehat{f}(\xi)|\,d\xi .\] 
This means that we can reduce the proof to the case $0<\alpha<1$. Expressing $f(x)$ as the inverse Fourier transform of $\widehat{f}(\xi)$ we get
\[\frac{ |f(x+t)-f(x)|}{|t|^{\alpha}} \le C  \int_{\RR^n}  \left| \frac{e^{i\xi\cdot t }-1}{|t|^{\alpha}} \right| |\widehat{f}(\xi)| \,d\xi  .\]
Then is enough to show that 
\[\left| \frac{e^{i\xi\cdot t }-1}{|t|^{\alpha}} \right| \le 2|\xi|^{\alpha}.\]
The previous inequality is immediate for $|\xi| \ge |t|^{-1}$, so we consider  $|\xi| \le |t|^{-1}$. In this case we have $|\xi|| t| \le 1$ which implies
\[|e^{i\xi\cdot t }-1| \le 2|\xi||t| \le 2 |\xi|^{\alpha}|t|^\alpha,\]
and this yields the desired result.
\end{proof}

\begin{proof}[Proof of  $\Cref{lemma:integral2}$]
 Consider $\SP_\rho$ centred in the origin. Assume that $x \neq 0$, and take $\omega \in \SP_\rho$ such that $\omega = x/|x|$. Let $P_\omega = \{x\in \RR^n: x \cdot \omega=0\}$, and let $P(z):= z-(z\cdot\omega)\omega$, be the projection of $z\in \RR^n$ on the plane $P_\omega$. Consider the half sphere comprised between the plane $P_\omega$ and the parallel one that goes trough $x$. The Jacobian of the projection $P$ restricted to $\SP_\rho$ is uniformly bounded in $\rho$ if we exclude a small band of $\rho\varepsilon$ width from it. Let's denote this region by $S_{\rho\varepsilon}$ (the half sphere minus the band). We have that
 \[\int_{\SP_\rho} \frac{1}{|x-y|^{a}\jp{x-y}^{b}} \, d\sigma_\rho(y) \le 2n \int_{S_{\rho\varepsilon}}  \frac{1}{|x-y|^{a}\jp{x-y}^{b}} \, d\sigma_\rho(y),\]
since in the region $S_{\rho\varepsilon}$ the integrand has larger values than in the rest of the sphere (we are in the half which is closer to $x'$, and it is possible to cover generously $\SP^{n-1}$ with $2n$ pieces like $S_{\rho\varepsilon}$). Then we can use the change of variables $z=P(y)$ to integrate in the corresponding region of the plane. Hence, since the integrand is a decreasing function,
\begin{align*}
&\int_{S_{\rho\varepsilon}}  \frac{1}{|x-y|^{a}\jp{x-y}^{b}} \, d\sigma_\rho(y) \le \int_{S_{\rho\varepsilon}}  \frac{1}{|P(y)|^{a}\jp{P(y)}^{b}} \, d\sigma_\rho(y)\\
&\hspace{30mm} \le C \int_{P(S_{\rho\varepsilon})} \frac{1}{|z|^{a}\jp{z}^{b}} \, dz 
 \le C \int_{\RR^{n-1}}\frac{1}{|z|^{a}\jp{z}^{b}} \, dz< \infty ,
\end{align*}
where we have used that $P(x)=0$. 
\end{proof}


\section*{Acknowledgments}

I am very grateful to  my PhD advisors Alberto Ruiz  and Juan Antonio Barceló for their invaluable advice and constant support during the development of this work. 

The author was supported by Spanish government predoctoral grant BES-2015-074055 (project MTM2014-57769-C3-1-P).


  \bibliographystyle{plainurl}
  \bibliography{referencesH_R}

\end{document}